\newtheorem{theorem}{Theorem}[section]
\newtheorem{proposition}[theorem]{Proposition}
\newtheorem{lemma}[theorem]{Lemma}
\newtheorem{corollary}[theorem]{Corollary}
\newtheorem{example}[theorem]{Example}
\newtheorem{remark}[theorem]{Remark}
\theoremstyle{definition}
\numberwithin{figure}{section}
\numberwithin{equation}{section}
\numberwithin{table}{section}
\newcommand{\R}{\mathbb{R}}
\newcommand{\supp}{\operatorname{supp}}
\DeclareMathOperator{\ir}{ir}
\DeclareMathOperator{\clir}{clir}
\DeclareMathOperator{\inte}{int}
\DeclareMathOperator{\conv}{conv}
\DeclareMathOperator{\pos}{pos}
\DeclareMathOperator{\relint}{relint}
\DeclareMathOperator{\argmin}{argmin}
\DeclareMathOperator{\rank}{rank}
\keywords{polyhedral regression, least-squares estimation, support functions, deformations, approximation of polytopes}
\subjclass[2010]{52A41, 90C20, 62M30, 52B12}
\date{\today}
\title[Learning polytopes]{Learning polytopes with fixed facet directions}
\author{Maria Dostert and Katharina Jochemko}
\begin{document}
\address[MD]{Department of Mathematics\\
  KTH Royal Institute of Technology, Stockholm, Sweden}
\email{dostert@kth.se}
\address[KJ]{Department of Mathematics\\
  KTH Royal Institute of Technology, Stockholm, Sweden}
\email{jochemko@kth.se}

\begin{abstract}
We consider the task of reconstructing polytopes with fixed facet directions from finitely many support function evaluations.  We show that for a fixed simplicial normal fan the least-squares estimate is given by a convex quadratic program. We study the geometry of the solution set and give a combinatorial characterization for the uniqueness of the reconstruction in this case.  We provide an algorithm that, under mild assumptions, converges to the unknown input shape as the number of noisy support function evaluations increases. We also discuss limitations of our results if the restriction on the normal fan is removed.

\end{abstract}

\maketitle

\section{Introduction}

The task of reconstructing an unknown geometric object from possibly noisy data such as information about sections and projections or support function evaluations appears naturally in many areas of science and engineering. Application fields include, for example, computerized tomography, computer vision, robotics, and magnetic resonance imaging \cite{RoboticsCole, RoboticsAndComputerVision, ImageProcessing, MinimalData}. Geometric reconstruction problems give rise to interesting mathematical questions, for instance, about the convergence and uniqueness of the reconstruction. The study of these questions oftentimes require methods from a variety of fields such as combinatorics, convex and computational geometry, functional analysis, optimization and statistics. 

In the present article we study the problem of learning a polytope from possibly noisy evaluations of its support function. This question arises, for example, in target reconstruction from laser data~\cite{LaserData}, cone-beam based computerized tomography~\cite{Gregor2002FastFR}, projection magnetic resonance imaging~\cite{MRI}, and robotics where the data arises from grasps by a robot gripper~\cite{PrinceWillsky, Schneiter}.  See also~\cite{GardnerNew} and~\cite[pp. 135-136]{GeometricTomography} as well as references therein.

Given a convex body $P$ in $\mathbb{R}^d$, the support function $h_P \colon \mathbb{R}^d\rightarrow \mathbb{R}$  of $P$ is defined by
\[
h_P(\mathbf{u})=\max _{\mathbf{x}\in P} \langle \mathbf{x},\mathbf{u}\rangle
\]
for all $\mathbf{u}\in \mathbb{R}^d$. Every convex body is uniquely determined by its support function. The challenge is to retrieve the convex body from incomplete or noisy information about the support function. Given a data set
\[
\left\{(\mathbf{u}^{(i)},y^{(i)}) \colon y^{(i)}=h_P(\mathbf{u}^{(i)}) + \varepsilon ^{(i)} \right\}_{i=1}^m \subset \mathbb{R}^d\times \mathbb{R}
\]
consisting of pairs of directions in $\mathbb{R}^d$ and corresponding (possibly noisy) support function evaluations, a general approach is to estimate the unknown input shape by using the least-squares method. For a given set $\mathcal{K}$ of convex bodies in $\mathbb{R}^d$ the least-squares estimator is defined as
\[
\hat{P}(U,\mathbf{y}) = \argmin \nolimits_{P\in \mathcal{K}} \frac{1}{m}\sum _{i=1}^m \left(h_P (\mathbf{u}^{(i)})-y^{(i)}\right)^2 \, .
\]
If $\mathcal{K}$ is the set of all $0$-dimensional convex bodies in $\mathbb{R}^d$ then the support function is given by the scalar product. In this setting, the estimator is the solution obtained by ordinary linear regression. The support function of any convex body is positively homogeneous and convex, and these properties also characterize support functions of convex bodies. Therefore, in general, the task of reconstruction a convex body from support function evaluations is equivalent to fitting a positively homogeneous and convex function to given data points. 

The estimator and its computational tractability as well as its uniqueness depends on the choice of $\mathcal{K}$. Foundational works by Prince and Willsky~\cite{PrinceWillsky}, Lele, Kulkarni and Willsky~\cite{LaserData}, Gardner and Kinderlen~\cite{GardnerNew} and Gardner, Kinderlen and Milanfar~\cite{gardner2006convergence} investigate reconstruction algorithms for the case that $\mathcal{K}$ consists of all convex bodies. In ~\cite{Guntuboyina} Guntuboyina considers this problem for polytopes with increasing maximal number of vertices and shows convergence rates. In recent work, Soh and Chandrasekaran~\cite{Soh} argue that even though the Prince-Willsky algorithm~\cite{PrinceWillsky} converges, as was shown in~\cite{gardner2006convergence}, the least-squares estimator provides in general only little information about the facial structure, depending on the underlying convex body. They address this by considering the case when $\mathcal{K}$ is a finitely parametrized set of convex bodies consisting of linear images of specific sets, in this case, simplices and free spectrahedra. In this article we add to this body of work on parametric reconstruction by investigating the question of reconstructing polytopes with given facet directions parametrized by their facet displacements.
\subsubsection*{Main contributions} In the following we summarize the main contributions of this article. For precise definitions of the terms used see Section~\ref{sec:prelim}.

We consider the family of polytopes with given fixed facet directions $\mathbf{v}_1,\ldots, \mathbf{v}_n \in \mathbb{R}^d$. Every non-empty polytope
\[
P=\{\mathbf{x}\in \mathbb{R}^d \colon \langle \mathbf{v}_i , \mathbf{x} \rangle \leq b_i , i=1,\ldots , n \} \, , \quad b_1,\ldots, b_n \in \mathbb{R},
\]
is uniquely determined by its facet displacements $h_P(\mathbf{v}_i)=h_i \leq b_i$, $i=1,\ldots, n$, and we write $P=P(\mathbf{h})$ where $\mathbf{h}=(h_1,\ldots, h_n)$ is the vector of displacements called support vector. Note, that if $b_i$ is greater than $h_i$ then the inequality $\langle\mathbf{v}_i,\mathbf{x}\rangle \leq b_i$ is redundant. The set of all such polytopes can be partitioned according to their normal fans. In particular, if two polytopes belong to the same partition then they have the same combinatorial type. The set of all polytopes with a given normal fan $\Delta$ has the structure of an open polyhedral cone, called type cone~\cite{McMullen}. The closure of this type cone, denoted $\mathcal{P}(\Delta)$ consists of all deformations of polytopes with normal fan $\Delta$. Geometrically, a polytope is a deformation of a polytope if it can be obtained by movements of the facets, while keeping their directions, without passing a vertex. Type cones and deformations are a classical topic in discrete geometry that played a prominent role in McMullen's seminal work on the polytope algebra~\cite{PolytopeSimple,PolytopeAlgebra}.

We focus on the case when $\Delta$ is simplicial as a fan. The restriction to simplicial fans can be justified since every polytope is a deformation of a simple polytope (with a simplicial normal fan)~\cite{Shephard}. 
Given a simplicial polytopal fan $\Delta$ and input-output data $\{(\mathbf{u}^{(i)},y^{(i)})\}_{i=1}^m\subset \mathbb{R}^d \times \R$ we show (Theorem~\ref{thm:main}) that the least-squares estimate is given by
\[
\hat{P}^{\Delta}(U,\mathbf{y}) = \argmin _{P(\mathbf{h})\in \mathcal{P}(\Delta)}\| A_U\mathbf{h}-\mathbf{y}\| \, ,
\]
where $A_U \in \mathbb{R}^{m\times n}$ is a sparse matrix that only depends on $\Delta$ and the directions $U=(\mathbf{u}^{(1)},\ldots, \mathbf{u}^{(m)})^\mathsf{T}$. Hence, for a given simplicial fan $\Delta$ and $\mathcal{K}=\mathcal{P}(\Delta)$ we obtain that the least-squares estimator $\hat{P}^\Delta(U,\mathbf{y})$ is the solution of a convex quadratic program and therefore globally optimal. Moreover, the set of all type cones of polytopal fans with fixed ray generators $\mathbf{v}_1,\ldots, \mathbf{v}_n$ forms a polyhedral fan. It follows that if $\mathcal{K}$ is the set of all polytopes with fixed facet directions then the least-squares estimate is the solution of a piecewise quadratic program (Corollary~\ref{cor:piecewisequadratic}). 

Theorem~\ref{thm:main} then allows us to study the geometry of the solution set $\hat{P}^\Delta(U,\mathbf{y})$. In particular, we see that the solution set $\hat{P}^\Delta(U,\mathbf{y})$ is always a polyhedron in the parameter space. We also study the uniqueness of the reconstruction. In particular, we show in Proposition~\ref{prop:injective} that the set of all matrices $U$, for which $\hat{P}^\Delta(U,\mathbf{y})$ is unique, is a semi-algebraic set. Further, we provide a combinatorial characterization for the uniqueness of the reconstruction. We construct a bipartite graph $G_U$ with vertex set $[n]\sqcup [m]$ that only depends on $\Delta$ and the matrix of directions $U$ such that $\hat{P}^\Delta(U,\mathbf{y})$ is almost surely unique for all $\mathbf{y}$ if and only if $G_U$ has a matching of cardinality $n$. In particular, a unique reconstruction requires at least as many data than facet directions.

We also consider the convergence of the least-squares estimator for sequences of data consisting of noisy support function evaluations. Given input data $\mathbf{u}^{(1)},\mathbf{u}^{(2)},\ldots,$ that are sufficiently concentrated around every single ray generator $\mathbf{v}_i$ of $\Delta$ and an unknown input shape $P\in \mathcal{P}(\Delta)$.  Under these assumptions, we show in Theorem \ref{thm:convergence} that the least-squares estimator converges almost surely to $P$ in Hausdorff distance under mild assumptions on the noise $\{\varepsilon^{(i)}\}_{i\geq 1}$. In particular, if the noise is normally distributed with bounded variance the convergence rate is $O(1/\sqrt{m})$ up to a fixed failure probability. 

If we weaken our assumptions from $\mathcal{K}$ being polytopes in $\mathcal{P}(\Delta)$ to arbitrary polytopes with facet directions $\mathbf{v}_1,\ldots, \mathbf{v}_n$ then the uniqueness and convergence results do not hold in general. We provide examples illustrating the limitations.

\begin{figure}[!h]
\centering
\begin{picture}(100,90)
\put(-100,0){\includegraphics[width=0.26\textwidth]{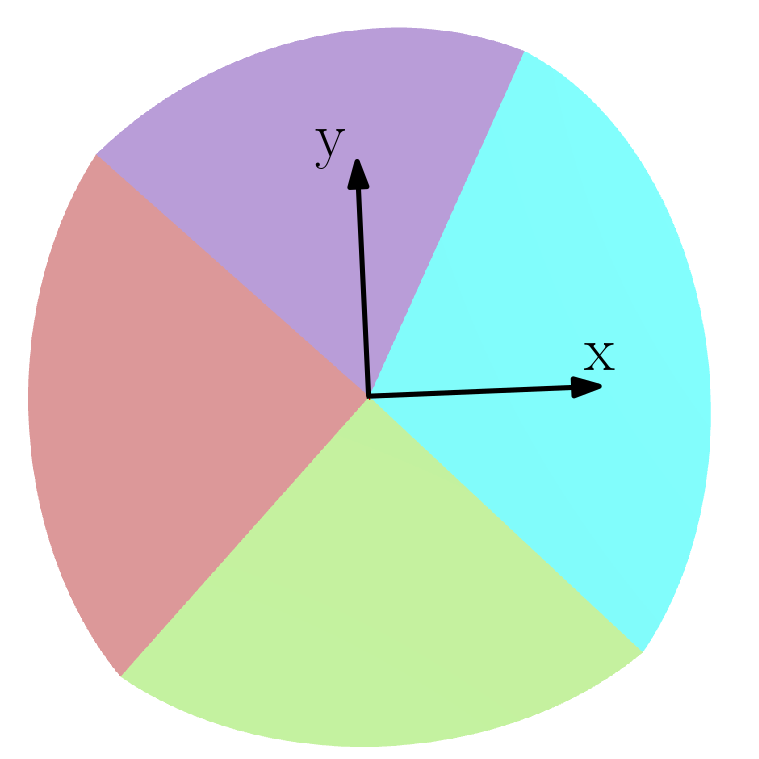}}
\put(40,0){\includegraphics[width=0.49\textwidth]{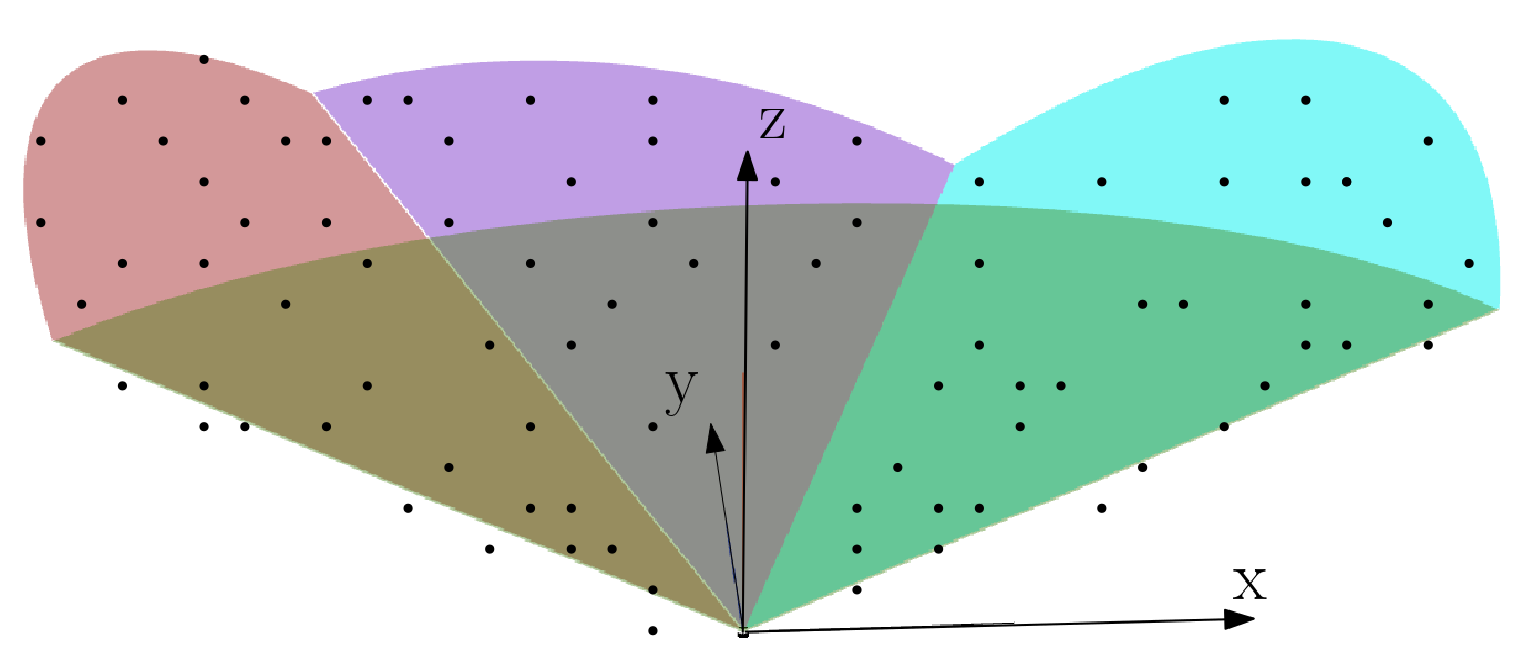}}
\end{picture}
\caption{Left: Conical regions of linearity. Right: Convex piecewise linear function and sample points}
\label{Figure:Application}
\end{figure}

\subsubsection*{Related work}Reconstructing polytopes from support function evaluations may be viewed as fitting a convex, positively homogeneous, piecewise linear function to the given data $\{(\mathbf{u}^{(i)},y^{(i)})\}_{i=1}^m$. Moreover, considering the reconstruction of polytopes with a given simplicial normal fan corresponds to fixing the regions of linearity and is therefore an instance of piecewise linear regression. See Figure~\ref{Figure:Application} for an illustration. This fits into the general framework of max-affine regression which has also been studied in~\cite{Balzs2016ConvexRT,Hannah2013,Magnani2009,Soh}. More recently, in~\cite{Reilly} O'Reilly and Chandrasekaran generalized this further to spectrahedral regression, that is, fitting a spectrahedral function to the data. In contrast to these previous works our setup yields a global optimum. In a slightly different setup, Brysiewicz~\cite{MR4133662} considers the reconstruction of Newton polytopes of hypersurfaces represented numerically via so-called witness sets.

Gardner et al ~\cite{gardner2006convergence} were the first to prove convergence of the Prince-Willsky algorithm~\cite{PrinceWillsky} under the assumption that the directions $\mathbf{u}^{(i)}$ are \textit{evenly spread}, which roughly means that the number of elements of the sequence of directions grows linearly in every neighborhood of any vector in $\mathbb{S}^{d-1}$. In Theorem~\ref{thm:convergence} we prove that our algorithm converges under a similar assumption on the neighborhood of every facet direction $\mathbf{v}_{i}$. Therefore our proof of convergence holds under weaker assumptions.

\subsubsection*{Outline of the paper}
In Section~\ref{sec:prelim} we introduce necessary preliminaries and notations. In Section~\ref{sec:structuralresults} we study the geometry of the solution set. In particular we show in Section~\ref{subsec:solutionset} that for fixed normal fan the least-squares estimator is given by a quadratic program. Furthermore, in Section~\ref{subsec:uniqueness} we study conditions under which the least-squares estimator is unique. Thereafter, we consider the complexity of the computation of the least-squares estimator in Section~\ref{subsec:algorithm}. We also compare our algorithm to the algorithm provided by Gardner and Kinderlen~\cite{GardnerNew} adapted to our setting. In Section~\ref{sec:convergence} we prove convergence of the reconstruction under mild assumptions. Finally, in Section~\ref{sec:ReconstrFixedFacetNormals} we remove the assumption on a normal fan. We give examples which show that our results in general do not hold without the assumption of a fixed normal fan.

\section{Notation and preliminaries}\label{sec:prelim}
\subsection{Cones and fans}
In this section we collect necessary preliminaries from polyhedral geometry and define notation. For further reading we recommend \cite{Gruber,Ziegler}.

In the following we work in the Euclidean space $\mathbb{R}^d$ with standard inner product $\langle .,. \rangle$ and norm $\|.\|$. A \textbf{polyhedron} is defined as the intersection of finitely many halfspaces. A collection $\Delta$ of polyhedra is a \textbf{polyhedral complex} if the following conditions are satisfied:
\begin{itemize}
\item[(i)] if $C\in \Delta$ then also every face of $C$ is contained in $\Delta$, and
\item[(ii)] if $C_1, C_2 \in \Delta$ then $C_1\cap C_2$ is a face of $C_1$ and $C_2$.
\end{itemize}
A \textbf{polyhedral cone} is a finite intersection of closed linear half-spaces in $\mathbb{R}^d$. Equivalently, a set $C$ is a polyhedral cone if there are finitely many vectors $\mathbf{v}_1,\ldots, \mathbf{v}_k\in \mathbb{R}^d$ such that
\[
C \ = \ \pos ( \{\mathbf{v}_1,\ldots, \mathbf{v} _k\}) \ = \ \left\{\sum _{i=1}^k \lambda _i \mathbf{v}_i \colon \lambda _1,\ldots, \lambda _k \geq 0 \right\} \, .
\]
The vectors $\mathbf{v}_1,\ldots, \mathbf{v}_k$ are said to \textbf{generate} the cone $C$. If $C$ is generated by linearly independent vectors then $C$ is called a \textbf{simplicial} cone.

A \textbf{fan} $\Delta$ is a polyhedral complex consisting of cones.
The \textbf{support} of $\Delta$ is the geometric union of all cones in $\Delta$, that is,
\[
\supp (\Delta) = \bigcup _{\sigma \in \Delta} \sigma \subseteq \mathbb{R}^d\, .
\]
The carrier of a vector $\mathbf{x}\in \supp (\Delta)$, denoted $\sigma (\mathbf{x})$, is the unique cone $\sigma\in \Delta$ such that $\mathbf{x}$ is contained in $\relint \sigma$, the relative interior of $\sigma$. The set of all cones of dimension $i$ in $\Delta$ is denoted by $\Delta^{(i)}$.

A fan $\Delta$ \textbf{refines} a fan $\Delta'$ (equivalently, $\Delta'$ \textbf{coarsens} $\Delta$) if every cone in $\Delta '$ is a union of cones in $\Delta$.

Given two fans $\Delta$ and $\Delta'$ with the same support, the \textbf{coarsest common refinement}, denoted $\Delta _1\wedge \Delta _2$, is the polyhedral fan
\[
\Delta _1\wedge \Delta _2 = \{ \sigma _1\cap \sigma _2 \colon \sigma _1 \in \Delta, \sigma _2 \in \Delta'\} \, .
\]

For every polyhedron $P$ in $\mathbb{R}^d$ and every non-empty face $F$ of $P$ the \textbf{normal cone} $N _F (P)$ of $P$ at $F$ is defined as
\[
N_F (P) \ = \ \left\{\mathbf{x}\in \R ^d \colon \max _{\mathbf{y}\in P}\langle \mathbf{x},\mathbf{y}\rangle= \langle \mathbf{x},\mathbf{p}\rangle  \text{ for all }\mathbf{p}\in F\right\} \, .
\]
The collection of all normal cones of $P$ forms the \textbf{normal fan} of $P$,
\[
\mathcal{N} (P) \ = \ \left\{N_F (P) \colon F \text{ non-empty face of } P \right\}\, .
\]
See the hexagonal fan in Figure~\ref{fig:hexagon} for an example. A bounded polyhedron $P\subset \R ^d$ is called a \textbf{polytope}. A polyhedron is a polytope if and only if $\supp \mathcal{N} (P)$ is equal to $\mathbb{R}^d$. If $P$ is full-dimensional this is the case if and only if the ray generators of the normal fan positively span $\mathbb{R}^d$. A fan is called \textbf{polytopal} if it is the normal fan of a polyhedron. Furthermore, a simplicial polytopal fan is a polytopal fan which is simplicial as a fan. Two polyhedra $P$ and $Q$ are called \textbf{normally equivalent} if $\mathcal{N} (P) = \mathcal{N}(Q)$.

The \textbf{support function} $h_P \colon \mathbb{R}^d \rightarrow \mathbb{R}$ of a polytope $P$ is the vector sum
\[
h_P (\mathbf{u})=\max _{\mathbf{y}\in P}\langle \mathbf{u},\mathbf{y}\rangle \, .
\]
The support function is positively homogeneous and convex: for all $\lambda >0$ and all $\mathbf{u}\in \mathbb{R}^d$ it holds that $h_P (\lambda \mathbf{u})=\lambda h_P(\mathbf{u})$ and for all $0\leq \lambda \leq 1$ and all $\mathbf{u},\mathbf{v}\in \mathbb{R}^d$
\[
h_P (\lambda \mathbf{u}+(1-\lambda)\mathbf{v}) \leq \lambda h_P(\mathbf{u})+(1-\lambda)h_P(\mathbf{v}) \, .
\]
These two properties also characterize convex and bounded sets: for every positively homogeneous and convex function there is a unique convex body whose support function coincides with this function. In particular, every polytope is uniquely determined by its support function. The following lemma is immediate.
\begin{lemma}\label{lem:hPlinear}
For every polytope $P$, every non-empty face $F$ of $P$ and every $\mathbf{u}\in N_F(P)$
\[
h_P(\mathbf{u})=\langle \mathbf{u},\mathbf{p} \rangle \quad \text{for all }  \mathbf{p} \in F \, .
\]
In particular, the support function restricted to any normal cone is linear.
\end{lemma}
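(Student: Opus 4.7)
The plan is to observe that the first claim is almost entirely a matter of unpacking definitions, and that linearity on $N_F(P)$ then follows by exploiting the fact that a fixed common maximizer $\mathbf{p}\in F$ works uniformly for all directions in $N_F(P)$.

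For the first statement, I would argue directly from the definition of the normal cone given earlier: by construction, $\mathbf{u}\in N_F(P)$ means exactly that $\max_{\mathbf{y}\in P}\langle \mathbf{u},\mathbf{y}\rangle = \langle \mathbf{u},\mathbf{p}\rangle$ for every $\mathbf{p}\in F$. Since the left-hand side is the definition of $h_P(\mathbf{u})$, the identity $h_P(\mathbf{u})=\langle \mathbf{u},\mathbf{p}\rangle$ for all $\mathbf{p}\in F$ is immediate. The only thing that needs a word is that $F$ being a non-empty face of $P$ guarantees the existence of at least one such $\mathbf{p}$, which is needed to make the statement non-vacuous and to run the next part of the argument.

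For the linearity part, I would fix any $\mathbf{p}\in F$ and show that the map $\mathbf{u}\mapsto \langle \mathbf{u},\mathbf{p}\rangle$ agrees with $h_P$ on all of $N_F(P)$. Concretely, for $\mathbf{u},\mathbf{v}\in N_F(P)$ and scalars $\lambda,\mu\geq 0$, I would first note that $N_F(P)$ is a convex cone: closure under positive scaling is obvious from the definition, and for closure under addition one uses that for every $\mathbf{y}\in P$,
\[
\langle \lambda\mathbf{u}+\mu\mathbf{v},\mathbf{y}\rangle \;=\; \lambda\langle \mathbf{u},\mathbf{y}\rangle + \mu\langle \mathbf{v},\mathbf{y}\rangle \;\leq\; \lambda\langle \mathbf{u},\mathbf{p}\rangle + \mu\langle \mathbf{v},\mathbf{p}\rangle \;=\; \langle \lambda\mathbf{u}+\mu\mathbf{v},\mathbf{p}\rangle,
\]
with equality attained at $\mathbf{y}=\mathbf{p}$. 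Therefore $\lambda \mathbf{u}+\mu \mathbf{v}\in N_F(P)$ and by the first part $h_P(\lambda\mathbf{u}+\mu\mathbf{v}) = \langle \lambda\mathbf{u}+\mu\mathbf{v},\mathbf{p}\rangle = \lambda h_P(\mathbf{u})+\mu h_P(\mathbf{v})$, which is the claimed linearity.

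There is no real obstacle here beyond correctly recognizing that the definition of $N_F(P)$ was already phrased precisely as ``directions for which any point of $F$ is a maximizer of $\langle \cdot,\mathbf{y}\rangle$ over $P$'', so the lemma is essentially a restatement of that definition together with the observation that a single maximizer $\mathbf{p}$ linearizes $h_P$ on the whole cone. The only mild subtlety is keeping track of the quantifiers (``for all $\mathbf{p}\in F$'' versus ``there exists $\mathbf{p}\in F$'') and using non-emptiness of $F$ to pick such a $\mathbf{p}$.
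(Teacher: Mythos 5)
Your argument is correct and is exactly the intended reading of the definitions; the paper itself simply asserts that the lemma is immediate from the stated definition of the normal cone, which is what you have spelled out. (One tiny quantifier point: to conclude $\lambda\mathbf{u}+\mu\mathbf{v}\in N_F(P)$ you should note that the max is attained at \emph{every} $\mathbf{p}'\in F$, not just the fixed $\mathbf{p}$; this follows at once since by the first part $\langle\mathbf{u},\mathbf{p}'\rangle=\langle\mathbf{u},\mathbf{p}\rangle$ and $\langle\mathbf{v},\mathbf{p}'\rangle=\langle\mathbf{v},\mathbf{p}\rangle$ for all $\mathbf{p}'\in F$.)
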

The \textbf{Minkowski sum} of two polytopes $P$ and $Q$ is defined by
\[
P+Q \ = \ \{p+q \colon p\in P, q\in Q\}
\]
and is again a polytope. The normal fan of the Minkowski sum $P+Q$ is the coarsest common refinement of the normal fans of its summands, that is,
\[
\mathcal{N}(P+Q) = \mathcal{N}(P)\wedge \mathcal{N}(Q)  \, .
\]

The support function is additive under taking Minkowski sums and linear with respect to dilation with positive factors: For all polytopes $P,Q$ and any $\lambda >0$
\[
h_{P+Q}=h_P+h_Q \, ,
\]
and
\[
h_{\lambda P} =\lambda h_{P} \, .
\]
The \textbf{Hausdorff distance} $d_H(P,Q)$ between two subsets $P,Q$ of $\mathbb{R}^d$ is defined as 
\[
d_H(P,Q)= \max \{ \sup \nolimits_{\mathbf{p}\in P}d(\mathbf{p},Q),\sup \nolimits_{\mathbf{q}\in Q}d(\mathbf{q},P)\}
\]
where $d(\mathbf{p},Q)$ denotes the distance of $\mathbf{p}$ to the set $Q$. If $P$ and $Q$ are convex, then $d_H (P,Q)$ can be expressed in terms of their support functions as
\[
d_H(P,Q)=\max _{\mathbf{u}\in \mathbb{S}^d} |h_P (\mathbf{u})-h_Q(\mathbf{u})| \, .
\]
\subsection{The deformation cone $\mathcal{P}(\Delta)$} In this subsection we collect results on deformations and type cones. There are different ways of describing these objects. For our purpose we consider their parametrization via facet displacements as considered in~\cite{McMullen}. For equivalent descriptions see also~\cite{Meyer,Postnikov,PRW,Toricvarieties}. Let $V=\{\mathbf{v}_1,\ldots,\mathbf{v}_n\}\subset \mathbb{R}^d$ be positively spanning vectors. Then for all $\mathbf{h}\in \mathbb{R}^n$
\[
P \ = \ \{\mathbf{x}\in \mathbb{R}^d \colon \langle \mathbf{x},\mathbf{v_i}\rangle \leq h_i\text{ for all }1\leq i\leq n\}
\]
is a polytope. Vectors $\mathbf{h}\in \mathbb{R}^n$ for which this polytope is non-empty are called \textbf{compatible}. A compatible vector $\mathbf{h}$ is furthermore called \textbf{irredundant} if removing any inequality $\langle \mathbf{x},\mathbf{v_i}\rangle \leq h_i$ changes the polytope. The set of irredundant vectors $\mathbf{h}\in \mathbb{R}^d$, denoted $\ir (V)$ has the structure of an open polyhedral cone. For every vector $\mathbf{h}$ in the closure of $\ir (V)$, which is denoted by $\clir (V)$, it holds that $h_i=h_P(\mathbf{v}_i)$. It follows that there is a one-to-one correspondence between vectors in $\clir (V)$ and polytopes with facet directions $\mathbf{v}_1,\ldots, \mathbf{v}_n$. For all $\mathbf{h}\in \clir (V)$ we write $P(\mathbf{h})=P$. We will oftentimes identify $P(\mathbf{h})$ with its \textbf{support vector} $\mathbf{h}=(h_{P(\mathbf{h})}(\mathbf{v}_1),\ldots, h_{P(\mathbf{h})}(\mathbf{v}_n))$.

Let $\Delta$ be a polytopal fan with ray generators $V=\{\mathbf{v}_1,\ldots,\mathbf{v}_n\}\subset \mathbb{R}^d$. The set of all polytopes with normal fan $\Delta$ is closed under taking Minkowski sums and dilations by positive reals. It has thus the structure of an abstract open cone and is called \textbf{type cone} of $\Delta$, denoted $\mathcal{T}(\Delta)$. The closure of the type cone consists of all polytopes whose normal fan is a coarsening of $\Delta$. This closed cone, denoted $\mathcal{P}(\Delta)$, is called the \textbf{deformation cone} of $\Delta$. Equivalently, if $P$ is a polytope with normal fan $\Delta$ then the cone $\mathcal{P}(\Delta)$ consists of all polytopes $Q$ such that $\lambda Q+R=P$ for some polytope $R$ and some $\lambda >0$. Let $\mathbf{h},\mathbf{h}'$ be support vectors of two polytopes $P(\mathbf{h}), P(\mathbf{h}') \in \mathcal{P}(\Delta)$. Then
\[
P(\mathbf{h})+P(\mathbf{h}')=P(\mathbf{h}+\mathbf{h}') \quad \text{ and } P(\lambda \mathbf{h})=\lambda P(\mathbf{h}) \quad \text{for } \lambda >0 \, .
\]
Therefore, the map $\mathcal{P}(\Delta) \rightarrow \R^n, P(\mathbf{h}) \mapsto \mathbf{h}$ defines a linear embedding of $\mathcal{P}(\Delta)$ into $\mathbb{R}^n$. If $\Delta$ is a simplicial polytopal fan, then for all $\mathbf{h} \in \mathcal{T}(\Delta)$, the polytope $P(\mathbf{h})$  is a simple polytope. Since the property of being simple is stable under small pertubations of $\mathbf{h}$ the cone $\mathcal{P}(\Delta)$ is of dimension $n$. Moreover, the set of deformation cones form a polyhedral subdivision of $\clir (V)$~\cite{McMullen}, in particular, $\clir (V)$ is the union of $\mathcal{P}(\Delta)$ over all simplicial polytopal fans $\Delta$ with ray generators $V=\{\mathbf{v}_1,\ldots,\mathbf{v}_n\}$. It follows that every polytope with facet directions $V$ is a deformation of a simple polytope.

If $\Delta$ is a simplicial polytopal fan then the deformation cone is given by the following \textbf{wall-crossing inequalities} as given in~\cite{wallcrossing}. See also~\cite{Ivan}.
\begin{proposition}[{\cite[Lemma 2.1]{wallcrossing}}]\label{prop:wallcrossing}
Let $\Delta$ be a simplicial polytopal fan in $\mathbb{R}^d$ with ray generators $V=\{\mathbf{v}_1,\ldots,\mathbf{v}_n\}$. Then $\mathcal{P}(\Delta)\subset \mathbb{R}^n$ is given by all inequalities of the form
\[
\sum _{i=1}^{d+1}c_{j_i}h_{j_i}\geq 0
\]
where $\mathbf{v}_{j_1},\ldots, \mathbf{v}_{j_{d+1}}$ are the generators of neighboring maximal cells $\rho _1,\rho _2$ in $\Delta$ such that $\mathbf{v}_{j_2}\not \in \rho _1$ and $\mathbf{v}_{j_1}\not \in \rho _2$, and 
\[
\sum _{i=1}^{d+1}c_{j_i}\mathbf{v}_{j_i}= 0
\]
is the unique linear dependence relation such that $c_{j_1}+c_{j_2}=2$.
\end{proposition}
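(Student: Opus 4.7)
The plan is to prove both implications by translating between the support-vector description of $P(\mathbf{h})$ and the piecewise-linear function $\tilde h_{\mathbf{h}} \colon \mathbb{R}^d \to \mathbb{R}$ obtained by extending the assignment $\mathbf{v}_i \mapsto h_i$ linearly across each maximal cone of $\Delta$. Since $\Delta$ is simplicial, each maximal cone $\rho \in \Delta^{(d)}$ is generated by exactly $d$ of the $\mathbf{v}_i$, so $\tilde h_{\mathbf{h}}$ is well-defined on $\supp(\Delta)=\mathbb{R}^d$. The central observation, to be used in both directions, is that $P(\mathbf{h})\in \mathcal{P}(\Delta)$ if and only if $\tilde h_{\mathbf{h}}$ is convex, in which case $\tilde h_{\mathbf{h}} = h_{P(\mathbf{h})}$ by Lemma~\ref{lem:hPlinear}.

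For the forward direction, assume $P(\mathbf{h})\in \mathcal{P}(\Delta)$. Fix neighboring maximal cones $\rho_1,\rho_2$ with the setup of the statement, and let $\mathbf{p}_1$ be the vertex of $P(\mathbf{h})$ whose normal cone contains $\rho_1$. By Lemma~\ref{lem:hPlinear}, $\langle \mathbf{v}_{j_i},\mathbf{p}_1\rangle = h_{j_i}$ for $i\in\{1,3,\ldots,d+1\}$, while $\langle \mathbf{v}_{j_2},\mathbf{p}_1\rangle \le h_{j_2}$ since $\mathbf{p}_1\in P(\mathbf{h})$. Taking the inner product of $\mathbf{p}_1$ with the dependence $\sum_{i=1}^{d+1}c_{j_i}\mathbf{v}_{j_i}=0$ and rearranging yields
\[
\sum_{i=1}^{d+1} c_{j_i} h_{j_i} \;=\; c_{j_2}\bigl(h_{j_2} - \langle \mathbf{v}_{j_2},\mathbf{p}_1\rangle\bigr),
\]
which is nonnegative provided $c_{j_2}>0$. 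The latter is forced by geometry: the vectors $\mathbf{v}_{j_3},\ldots,\mathbf{v}_{j_{d+1}}$ span the wall $\rho_1\cap\rho_2$, and $\mathbf{v}_{j_1},\mathbf{v}_{j_2}$ lie on opposite sides of this hyperplane; hence $c_{j_1}$ and $c_{j_2}$ share a sign, and the normalization $c_{j_1}+c_{j_2}=2$ makes both positive.

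For the converse, suppose $\mathbf{h}$ satisfies every wall-crossing inequality. I need to show $\tilde h_{\mathbf{h}}$ is convex on $\mathbb{R}^d$, for then it agrees with $h_{P(\mathbf{h})}$ (since both are positively homogeneous, both equal $h_i$ at $\mathbf{v}_i$, and convex positively homogeneous functions are determined by their values on a positive spanning set via the identification with support functions), and linearity on each $\rho$ forces $\mathcal{N}(P(\mathbf{h}))$ to coarsen $\Delta$, i.e. $P(\mathbf{h})\in\mathcal{P}(\Delta)$. Convexity of the piecewise linear function $\tilde h_{\mathbf{h}}$ reduces to checking convexity across each codimension-one wall: for adjacent $\rho_1,\rho_2$ with linear pieces $\ell_1,\ell_2$ agreeing on $\rho_1\cap\rho_2$, convexity is equivalent to $\ell_2 - \ell_1$ being nonnegative on the side of the wall containing $\rho_1$, which in turn is equivalent to $\ell_2(\mathbf{v}_{j_1}) \ge \ell_1(\mathbf{v}_{j_1}) = h_{j_1}$. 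Computing $\ell_2(\mathbf{v}_{j_1})$ from the dependence relation (using $c_{j_1}\mathbf{v}_{j_1} = -c_{j_2}\mathbf{v}_{j_2}-\sum_{i\ge 3}c_{j_i}\mathbf{v}_{j_i}$ and $\ell_2(\mathbf{v}_{j_i})=h_{j_i}$ for $i\ne 1$) recasts this local condition as precisely the wall-crossing inequality $\sum_i c_{j_i}h_{j_i}\ge 0$.

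The main obstacle is the converse, specifically the passage from wall-by-wall local convexity to global convexity of $\tilde h_{\mathbf{h}}$ on all of $\mathbb{R}^d$. For a piecewise linear function on a complete simplicial fan this is a standard fact (a continuous PL function is convex iff its restriction to every line intersecting only two maximal cells is convex, and every such restriction is controlled by a single wall-crossing); I would invoke it and point to~\cite{Toricvarieties} rather than reprove it. The signs of the $c_{j_i}$ in the dependence, which I dealt with above, are also a point deserving care but follow cleanly from the separating-hyperplane picture of neighboring maximal cones.
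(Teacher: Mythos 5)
First, note that the paper does not actually prove this proposition: it is quoted verbatim from \cite[Lemma 2.1]{wallcrossing}, so there is no internal proof to compare against. Your strategy --- identify $\mathcal{P}(\Delta)$ with the cone of convex piecewise linear functions on $\Delta$, and check convexity wall by wall --- is the standard and correct way to establish the result, and your forward direction is sound: the identity $\sum_i c_{j_i}h_{j_i}=c_{j_2}\bigl(h_{j_2}-\langle \mathbf{v}_{j_2},\mathbf{p}_1\rangle\bigr)$ is right, and your argument that $c_{j_1},c_{j_2}$ share a sign (hence are both positive after normalization) is clean; one should add that neither can vanish, since otherwise the dependence would live among the $d$ linearly independent generators of a single simplicial cone.

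There is, however, a sign error in the converse that, as written, derives the \emph{reversed} inequality. With $\ell_k$ the linear piece on $\rho_k$, convexity of $\tilde h_{\mathbf{h}}$ across the wall is equivalent to $\ell_2-\ell_1\leq 0$ on the side containing $\rho_1$ (equivalently $\ell_1-\ell_2\le 0$ on the $\rho_2$ side), not $\geq 0$: a supporting linear piece of a convex function underestimates it off its domain of linearity. Your own forward computation confirms this orientation, since it shows $\ell_2(\mathbf{v}_{j_2})-\ell_1(\mathbf{v}_{j_2})=h_{j_2}-\langle\mathbf{v}_{j_2},\mathbf{p}_1\rangle\geq 0$ at the point $\mathbf{v}_{j_2}$ lying on the $\rho_2$ side. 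The correct local condition is therefore $\ell_2(\mathbf{v}_{j_1})\leq h_{j_1}$, and only this version recasts (using $c_{j_1}>0$ and $c_{j_1}\ell_2(\mathbf{v}_{j_1})=-c_{j_2}h_{j_2}-\sum_{i\geq 3}c_{j_i}h_{j_i}$) as $\sum_i c_{j_i}h_{j_i}\geq 0$; the condition $\ell_2(\mathbf{v}_{j_1})\geq h_{j_1}$ you state gives $\sum_i c_{j_i}h_{j_i}\leq 0$. A second, smaller imprecision: convex positively homogeneous functions are \emph{not} determined by their values on a positive spanning set (a ball and a polytope can have support functions agreeing at finitely many points). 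What you actually need is that $\tilde h_{\mathbf{h}}$, being convex, positively homogeneous and linear on each cone of $\Delta$, is the support function of $\bigcap_i\{\mathbf{x}:\langle\mathbf{x},\mathbf{v}_i\rangle\leq h_i\}=P(\mathbf{h})$, whose normal fan then coarsens $\Delta$. Both points are local and fixable; the reduction of global to wall-wise convexity for PL functions on a complete fan is indeed standard and fine to cite.
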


\begin{example}\label{ex:1}
Let $\Delta$ be the complete simplicial fan in $\mathbb{R}^2$ with rays $\sigma _k$ spanned by the unit vectors $\mathbf{v}_k=e^{\frac{2\pi i (k-1)}{6}}$ and maximal cones $\rho _k=\pos \{\mathbf{v}_k,\mathbf{v}_{k+1}\}$ for all $k\in \mathbb{Z}/6\mathbb{Z}$. (See Figure~\ref{fig:hexagon}.) The fan $\Delta$ equals the normal fan of the hexagon and is thus polytopal.  By considering each ray and its two neighboring maximal cells we obtain the linear dependence relation
\[
\mathbf{v}_i+\mathbf{v}_{i+2}-\mathbf{v}_{i+1}=\mathbf{0}
\]
for all $k\in \mathbb{Z}/6\mathbb{Z}$. By Proposition~\ref{prop:wallcrossing}, the cone $\mathcal{P}(\Delta)$ is thus equal to
\[
\mathcal{P}(\Delta) \ = \ \{ \mathbf{h}\in \mathbb{R}^6 \colon h_{k} + h_{k+2} - h_{k+1} \geq 0 \} \, .
\]
\end{example}
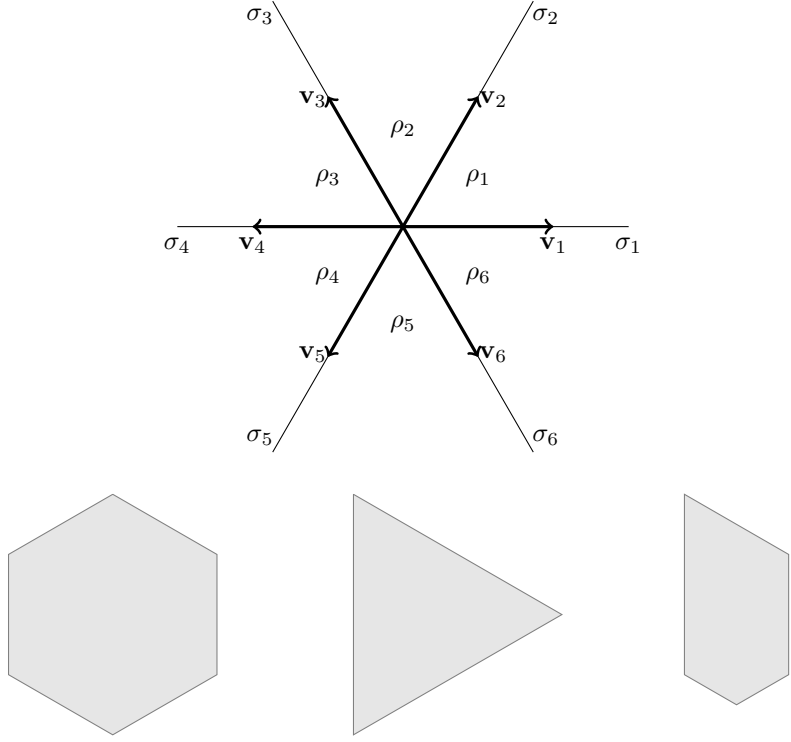
\begin{figure}
\begin{tikzpicture}
\draw[-] (-3,0) -- (3,0);
\draw[-] (-1.732,-3) -- (1.732,3);
\draw[-] (-1.732,3) -- (1.732,-3);
\draw[<->,very thick] (-2,0) -- (2,0);
\draw[<->, very thick] (-1,-1.732) -- (1,1.732);
\draw[<->, very thick] (-1,1.732) -- (1,-1.732);
\node at (2, -0.25) {$\mathbf{v}_1$}; 
\node at (-2, -0.25) {$\mathbf{v}_4$};
\node at (1.2, 1.7) {$\mathbf{v}_2$}; 
\node at (-1.2, -1.7) {$\mathbf{v}_5$};
\node at (-1.2, 1.7) {$\mathbf{v}_3$}; 
\node at (1.2, -1.7) {$\mathbf{v}_6$};

\node at (3, -0.25) {$\sigma_1$}; 
\node at (-3, -0.25) {$\sigma_4$};
\node at (1.9, 2.8) {$\sigma_2$}; 
\node at (-1.9, -2.8) {$\sigma_5$};
\node at (-1.9, 2.8) {$\sigma_3$}; 
\node at (1.9, -2.8) {$\sigma_6$};

\node at (1, 0.65) {$\rho_1$}; 
\node at (1, -0.65) {$\rho_6$}; 
\node at (-1, 0.65) {$\rho_3$}; 
\node at (-1, -0.65) {$\rho_4$}; 
\node at (0, 1.3) {$\rho_2$}; 
\node at (0, -1.3) {$\rho_5$}; 

\end{tikzpicture}
\\
\vspace{0.5cm}
\begin{tikzpicture}[scale=0.8]
\draw[color={gray}, fill=gray!20] (-8,2) -- (-6.268,1)--(-6.268,-1)--(-8,-2)--(-9.732,-1)--(-9.732,1)--(-8,2);

\draw[color={gray}, fill=gray!20] (-4,2)--(-4,-2)--(-0.536,0)--(-4,2);

\draw[color={gray}, fill=gray!20] (1.5,-1)--(1.5,2)--(3.232,1)--(3.232,-1)--(2.366,-1.5)--(1.5,-1);
\end{tikzpicture}
\caption{The hexagonal fan $\Delta$ of Example~\ref{ex:1} and examples of polytopes in $\mathcal{P}(\Delta)$.}
\label{fig:hexagon}
\end{figure}

\begin{example}\label{ex:3d}
We consider the set of facet directions $V = \{\mathbf{v}_1, \ldots, \mathbf{v}_5\}$ where 
\begin{align}\label{example:vi}
\mathbf{v}_1 = \begin{pmatrix} 0 \\ 1 \\1\end{pmatrix},
\mathbf{v}_2 = \begin{pmatrix} 0 \\ -1 \\1\end{pmatrix},
\mathbf{v}_3 = \begin{pmatrix} 1 \\ 0 \\1\end{pmatrix},
\mathbf{v}_4 = \begin{pmatrix} -1 \\ 0 \\1\end{pmatrix},
\mathbf{v}_5 = \begin{pmatrix} 0 \\ 0 \\-1\end{pmatrix}.
\end{align}\\
For $\mathbf{h}_1=(4,4,2,2,0)^\mathsf{T}$, $\mathbf{h}_2=(2,2,4,4,0)^\mathsf{T}$ and $\mathbf{h}_3=(2,2,2,2,0)^\mathsf{T}$, let $P_1 = P(\mathbf{h}_1)$, $P_2 = P(\mathbf{h}_2)$ and $Q = P(\mathbf{h}_3)$ be the polytopes depicted in Figure~\ref{ExampleP1dim3}. 

\begin{figure}[!h]
\centering
\begin{picture}(200,135)
\put(-40,0){\includegraphics[width=0.2\textwidth]{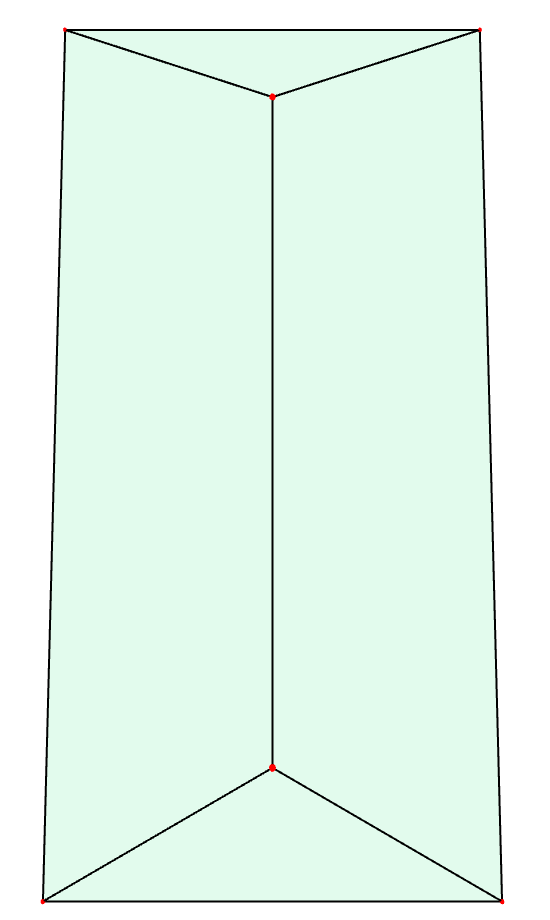}}
\put(45,0){\includegraphics[width=0.175\textwidth]{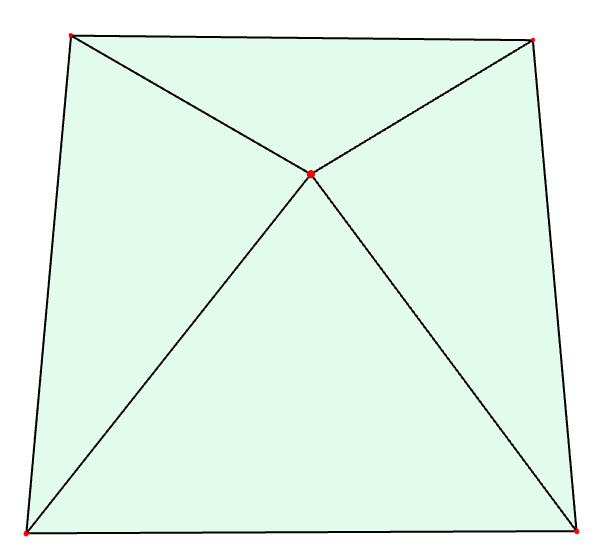}}
\put(120,-2){\includegraphics[width=0.32\textwidth]{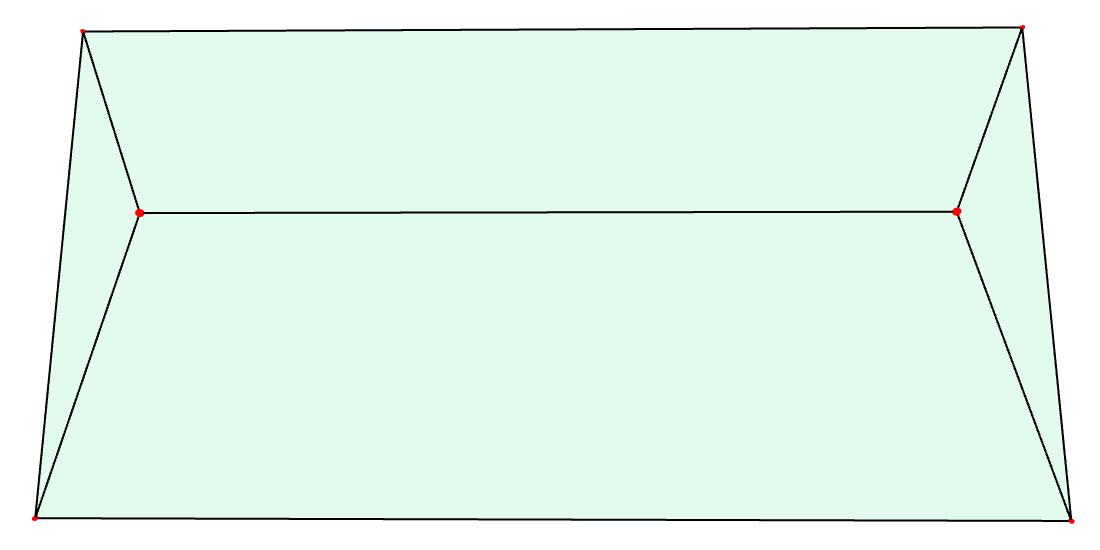}}
\put(-45,0){$p_4$}
\put(28,0){$p_2$}
\put(-45,115){$p_3$}
\put(28,113){$p_1$}
\put(-16,102){$p_5$}
\put(-16,25){$p_6$}
\put(-4,13){\vector(0,-1){20}}
\put(-4,112){\vector(0,1){20}}
\put(5,70){\vector(1,0){20}}
\put(-16,70){\vector(-1,0){20}}
\put(-18,125){$\mathbf{v}_1$}
\put(-18,-5){$\mathbf{v}_2$}
\put(28,70){$\mathbf{v}_3$}
\put(-49,70){$\mathbf{v}_4$}
\put(77,23){\vector(0,-1){20}}
\put(77,47){\vector(0,1){20}}
\put(91,37){\vector(1,0){20}}
\put(65,37){\vector(-1,0){20}}
\put(65,60){$\mathbf{v}_1$}
\put(82,5){$\mathbf{v}_2$}
\put(103,42){$\mathbf{v}_3$}
\put(34,42){$\mathbf{v}_4$}
\put(177,24){\vector(0,-1){20}}
\put(177,43){\vector(0,1){20}}
\put(228,34){\vector(1,0){20}}
\put(131,34){\vector(-1,0){20}}
\put(182,60){$\mathbf{v}_1$}
\put(182,5){$\mathbf{v}_2$}
\put(237,24){$\mathbf{v}_3$}
\put(115,24){$\mathbf{v}_4$}
\end{picture}
\caption{Left: Polyhedron $P_1$. Middle: Polyhedron: $Q$. Right: Polyhedron $P_2$. The vector $v_5$ is in all figures below the polytope, therefore it is not visible. 
}
\label{ExampleP1dim3}
\end{figure}

The normal fans $\Delta _1$, $\Delta _2$ and $\Delta _3$ of $P_1, P_2$ and $Q$ are depicted in Figure \ref{ExampleNormalFans}

\begin{figure}[!h]
\centering
\begin{picture}(200,100)
\put(-50,0){\includegraphics[width=0.25\textwidth]{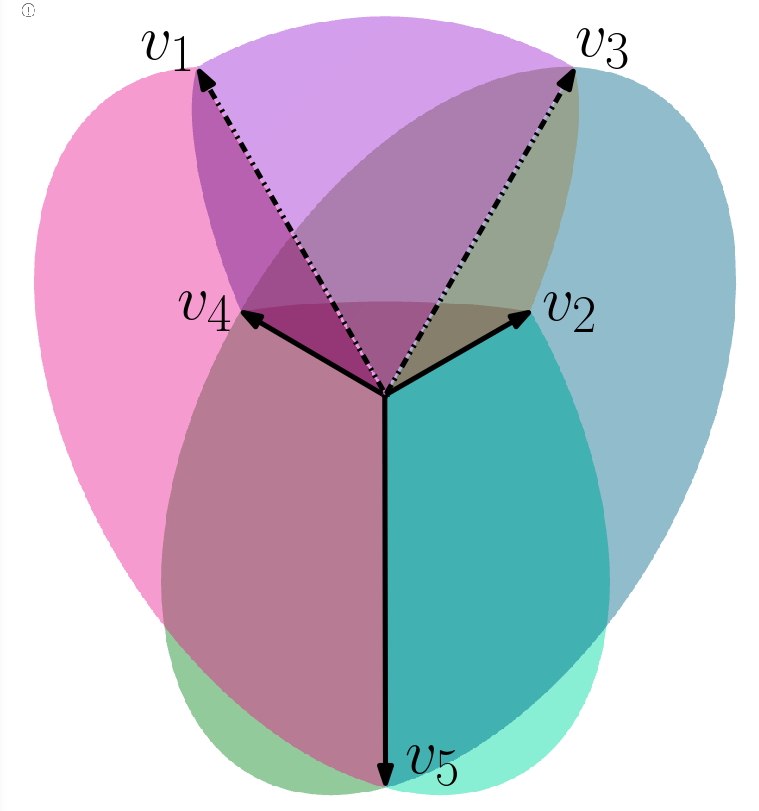}}
\put(55,0){\includegraphics[width=0.24\textwidth]{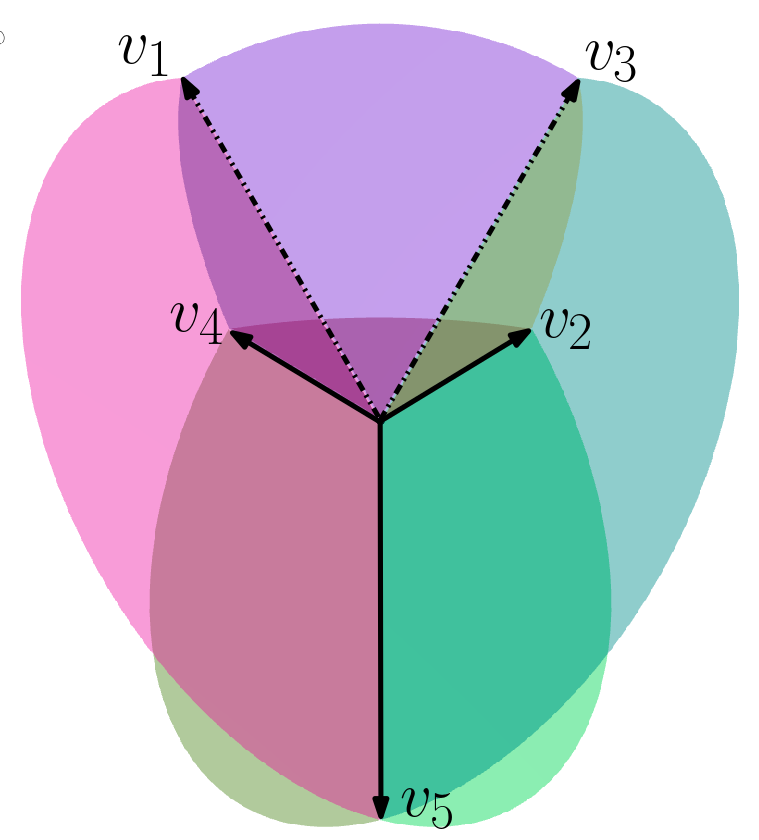}}
\put(160,0){\includegraphics[width=0.24\textwidth]{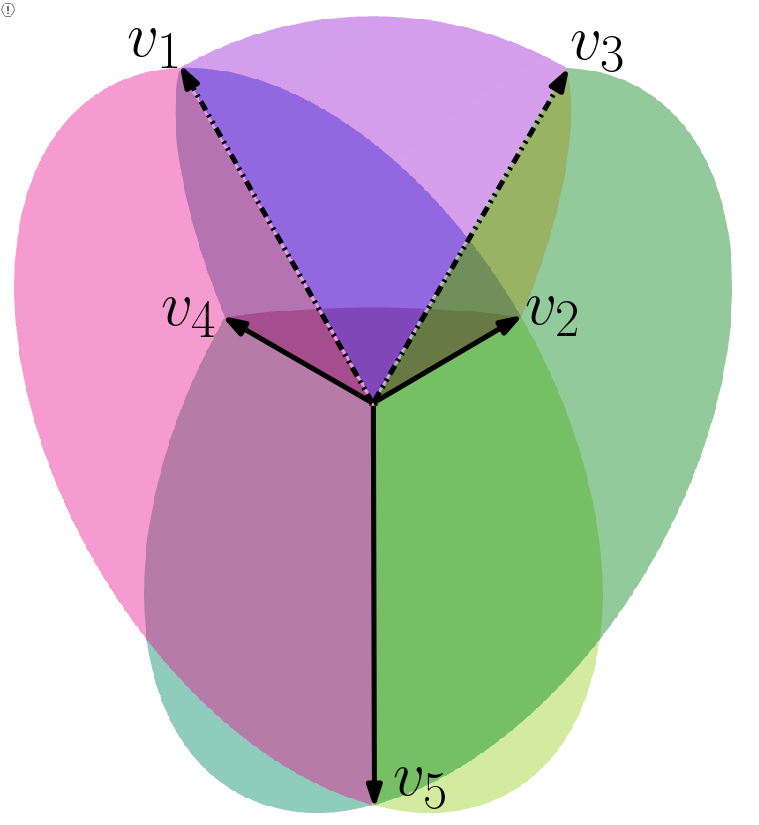}}
\end{picture}
\caption{Left: Normal fan of $P_1$. Middle: Normal fan of $Q$. Right: Normal fan of $P_2$. }
\label{ExampleNormalFans}
\end{figure}

The full-dimensional cones of $\Delta _1$ are 
\begin{align*}
\beta_1 = \pos(\{\mathbf{v}_1,\mathbf{v}_3,\mathbf{v}_5\}), \beta_2 = \pos(\{\mathbf{v}_2,\mathbf{v}_3,\mathbf{v}_5\}), \beta_3 = \pos(\{\mathbf{v}_1,\mathbf{v}_4,\mathbf{v}_5\}),\\
\beta_4 = \pos(\{\mathbf{v}_2,\mathbf{v}_4,\mathbf{v}_5\}), \beta_5 = \pos(\{\mathbf{v}_1,\mathbf{v}_3,\mathbf{v}_4\}), \beta_6 = \pos(\{\mathbf{v}_2,\mathbf{v}_3,\mathbf{v}_4\}).
\end{align*}

There are nine pairs of neighboring full-dimensional cones, each corresponding to an edge in $P_1$. Applying Proposition~\ref{prop:wallcrossing} to the generators of each such pair yields
$$\mathcal{P}(\Delta _1) = \{ \mathbf{h} \in \mathbb{R}^5 : h_1 + h_2 - h_3 - h_4 \geq 0, h_3+h_4+2h_5 \geq 0 \} \, .$$
Similarly, the maximal cells of $\Delta _2$ are
\begin{align*}
\gamma_1 = \beta_i \text{ for } i \in [4], \gamma_5 = \pos(\{\mathbf{v}_1,\mathbf{v}_2,\mathbf{v}_3\}), \gamma_6 = \pos(\{\mathbf{v}_1,\mathbf{v}_2,\mathbf{v}_4\})
\end{align*}
We see that the normal fans of $P_1$ and $P_2$ have the first four maximal cones in common but differ in the remaining two. For example, the maximal cone $\beta _5$ is contained in the normal fan of $P_1$ as the facets with normals $\mathbf{v}_1,\mathbf{v}_3$ and $\mathbf{v}_4$ intersect in the vertex $p_5$. In contrast, the corresponding facets in $P_2$ do not intersect and thus $\beta _5$ is not contained in the normal fan of $P_2$. The deformation cone $\mathcal{P}(\Delta _2)$ has the inequality description
$$\mathcal{P}(\Delta _2) = \{\ \mathbf{h} \in \mathbb{R}^5 : h_3 + h_4 - h_1 - h_2 \geq 0, h_1+h_2+2h_5 \geq 0 \} \, .$$
Every full-dimensional, simple polytope with facet directions $V$ has normal fan $\Delta _1$ or $\Delta _2$. Therefore,
\[
\clir(V) \ = \ \mathcal{P}(\Delta _1)\cup \mathcal{P}(\Delta _2) \ = \{\ \mathbf{h} \in \mathbb{R}^5 :  h_1+h_2+2h_5 \geq 0, h_3+h_4+2h_5\geq 0 \} \, .
\]

From the inequality descriptions we see that $Q$ is contained in the boundaries of both $\mathcal{P}(\Delta _1)$ and $\mathcal{P}(\Delta _2)$ but not in their interiors $\mathcal{T}(\Delta _1)$ and $\mathcal{T}(\Delta _2)$. Indeed, $Q$ can be obtained from both $P_1$ and $P_2$ by pushing in facets until the edge $p_5$ and $p_6$ degenerates to a point. This corresponds to the fact that the normal fan $\Delta _3$ refines both $\Delta _1$ and $\Delta _3$; the union of $\beta _5$ and $\beta _6$ (which equals the union of $\gamma _5$ and $\gamma _6$) forms a maximal cell in $\Delta _3$. This cell is not simplicial and indeed, $Q$ is not simple.

\end{example}

For every vector $\mathbf{u}\in \mathbb{R}^d$ we define a vector $[\mathbf{u}]^\Delta \in \mathbb{R}^n$ in the following way: let $\sigma$ be the unique cone in $\Delta$ such that $\mathbf{u}$ is contained in $\relint \sigma$. If $\Delta$ is clear from the context we simply write $[\mathbf{u}]$. Let $\mathbf{I}_\sigma$ be the set of indices of generators of $\sigma$, that is, $\sigma = \pos \{\mathbf{v}_i \colon i\in I_\sigma \}$, and let $\mathbf{u}=\sum _{k\in I_\sigma}\lambda _k \mathbf{v}_k$. Note that $|\mathbf{I}_\sigma|\leq d$ since $\Delta$ is simplicial. Then 
\[
[\mathbf{u}]_i=\begin{cases}
\lambda _i & \text{ if } i\in I_\sigma \, , \\
0 & \text{ otherwise.}
\end{cases}
\]
With this notation, by Lemma~\ref{lem:hPlinear}, the evaluation of the support function of $P=P(\mathbf{h})$ at $\mathbf{u}$ equals
\[
h_P(\mathbf{u})=\langle \mathbf{h},[\mathbf{u}]\rangle \, .
\]
We observe that $\mathbf{u}\mapsto [\mathbf{u}]$ is a continuous function. Thus the maximum
\[
c^\Delta = \max _{\mathbf{u}\in \mathbb{S}^{d-1}} \max _{1\leq i\leq n}[\mathbf{u}]_i
\]
exists. The Hausdorff distance between two polytopes $P(\mathbf{h})$ and $P(\mathbf{h}')$ can be bounded from above in the following way.
\begin{lemma}\label{lem:Hausdorffdist} For all polytopes $P(\mathbf{h})$ and $P(\mathbf{h}')$ in $\mathcal{P}(\Delta)$
\[
d_H(P(\mathbf{h}),P(\mathbf{h}')) \leq \sqrt{d}c^\Delta \| \mathbf{h}-\mathbf{h}'\| \, .
\]
\end{lemma}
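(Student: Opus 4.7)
The plan is to use the formula $d_H(P,Q) = \max_{\mathbf{u}\in \mathbb{S}^{d-1}} |h_P(\mathbf{u}) - h_Q(\mathbf{u})|$ for convex bodies, turn each pointwise difference $h_{P(\mathbf{h})}(\mathbf{u}) - h_{P(\mathbf{h}')}(\mathbf{u})$ into an inner product $\langle \mathbf{h}-\mathbf{h}',[\mathbf{u}]\rangle$, and then control $\|[\mathbf{u}]\|$ using simpliciality of $\Delta$ together with the definition of $c^\Delta$.

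First I would fix $\mathbf{u}\in \mathbb{S}^{d-1}$ and let $\sigma$ be the unique cone of $\Delta$ with $\mathbf{u}\in \relint\sigma$; such a cone exists because $\Delta$ is a complete fan (its rays $\mathbf{v}_1,\ldots,\mathbf{v}_n$ positively span $\R^d$). Since $P(\mathbf{h})\in \mathcal{P}(\Delta)$, the normal fan of $P(\mathbf{h})$ is a coarsening of $\Delta$, so $\sigma$ is contained in a single normal cone of $P(\mathbf{h})$. By Lemma~\ref{lem:hPlinear}, $h_{P(\mathbf{h})}$ is linear on $\sigma$, and hence
\[
h_{P(\mathbf{h})}(\mathbf{u}) \;=\; \sum_{k\in I_\sigma} \lambda_k\, h_{P(\mathbf{h})}(\mathbf{v}_k) \;=\; \sum_{k\in I_\sigma}\lambda_k h_k \;=\; \langle \mathbf{h},[\mathbf{u}]\rangle,
\]
where $\mathbf{u}=\sum_{k\in I_\sigma}\lambda_k\mathbf{v}_k$ is the unique decomposition (unique because $\sigma$ is simplicial). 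The same identity applies to $P(\mathbf{h}')$ with the same vector $[\mathbf{u}]$, so
\[
h_{P(\mathbf{h})}(\mathbf{u}) - h_{P(\mathbf{h}')}(\mathbf{u}) \;=\; \langle \mathbf{h}-\mathbf{h}',[\mathbf{u}]\rangle.
\]

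Next I would apply the Cauchy--Schwarz inequality to obtain $|h_{P(\mathbf{h})}(\mathbf{u}) - h_{P(\mathbf{h}')}(\mathbf{u})| \leq \|\mathbf{h}-\mathbf{h}'\|\cdot \|[\mathbf{u}]\|$, and bound $\|[\mathbf{u}]\|$. Because $\Delta$ is simplicial, $|I_\sigma|\leq d$, so $[\mathbf{u}]$ has at most $d$ nonzero coordinates. Each coordinate $[\mathbf{u}]_i$ is bounded in absolute value by $c^\Delta$ by the definition of $c^\Delta$ (noting that the $\lambda_k$ are nonnegative, so no absolute value issue arises). Therefore $\|[\mathbf{u}]\| \leq \sqrt{d}\, c^\Delta$.

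Taking the maximum over $\mathbf{u}\in \mathbb{S}^{d-1}$ in the Hausdorff distance formula then yields
\[
d_H(P(\mathbf{h}),P(\mathbf{h}')) \;\leq\; \sqrt{d}\, c^\Delta\, \|\mathbf{h}-\mathbf{h}'\|,
\]
as required. There is no real obstacle; the only point to handle carefully is the justification that the support functions of $P(\mathbf{h})$ and $P(\mathbf{h}')$ are both linear on every cone of $\Delta$, which is precisely the content of $\mathcal{P}(\Delta)$ consisting of polytopes whose normal fan coarsens $\Delta$.
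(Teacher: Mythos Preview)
Your proof is correct and follows essentially the same approach as the paper: express the support function difference as $\langle \mathbf{h}-\mathbf{h}',[\mathbf{u}]\rangle$, apply Cauchy--Schwarz, and bound $\|[\mathbf{u}]\|\leq \sqrt{d}\,c^\Delta$ using that $[\mathbf{u}]$ has at most $d$ nonzero entries each bounded by $c^\Delta$. You spell out a bit more carefully why $h_{P(\mathbf{h})}$ is linear on each cone of $\Delta$, but otherwise the arguments coincide.
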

\begin{proof}
For all $\mathbf{u}\in \mathbb{S}^{d-1}$
\[
|h_{P(\mathbf{h})}(\mathbf{u})-h_{P(\mathbf{h}')}(\mathbf{u})|= |\langle(\mathbf{h}-\mathbf{h}'),[\mathbf{u}]\rangle| \leq \|[\mathbf{u}]\|\| \mathbf{h}-\mathbf{h}'\|  \leq \sqrt{d}c^\Delta \| \mathbf{h}-\mathbf{h}'\| \, 
\]
since $[\mathbf{u}]$ has at most $d$ non-zero entries. In particular,
\[
d_H(P(\mathbf{h}),P(\mathbf{h}'))=\max_{\mathbf{u}\in \mathbf{S}^{d-1}} |h_{P(\mathbf{h})}(\mathbf{u})-h_{P(\mathbf{h}')}(\mathbf{u})|\leq \sqrt{d}c^\Delta \| \mathbf{h}-\mathbf{h}'\| \, .
\]
\end{proof}

\section{Structural results}\label{sec:structuralresults}
\subsection{Geometry of the solution set}\label{subsec:solutionset}
Given a data set of pairs of directions and support function evaluations 
\[
\{(\mathbf{u}^{(i)},y^{(i)}) \colon y^{(i)}=h_P(\mathbf{u}^{(i)}) + \varepsilon ^{(i)} \}_{i=1}^m
\]
we call $U \ = \ (\mathbf{u}^{(1)},\ldots, \mathbf{u}^{(m)})^\mathsf{T}$ the \textbf{matrix of directions} and $\mathbf{y}=(y^{(1)},\ldots, y^{(m)})^\mathsf{T}$ the \textbf{vector of support function evaluations}. For a given simplicial normal fan $\Delta$ let furthermore $A_U^\Delta \in \mathbb{R}^{m\times n}$ be the matrix
\[
A_U^\Delta = \left([\mathbf{u}^{(1)}]^\Delta,\ldots, [\mathbf{u}^{(m)}]^\Delta\right)^\mathsf{T}.
\]
Observe that $A_U^\Delta$ is sparse in the sense that every row contains at most $d$ nonzero entries. If $\Delta$ is clear from the context we write $A_U$. The following theorem shows that given a fixed simplicial fan $\Delta$ the least-squares estimator $\hat{P}^\Delta(U,\mathbf{y})$ is equal to the solution set of a convex quadratic program.
\begin{theorem}\label{thm:main}
For a data set $\{(\mathbf{u}^{(i)},y^{(i)})\}_{i=1}^m\subset \R^d \times \R$ and a given polytopal simplicial fan $\Delta$
\[
\hat{P}^\Delta(U,\mathbf{y}) = \argmin _{P(\mathbf{h})\in \mathcal{P}(\Delta)}\| A_U\mathbf{h}-\mathbf{y}\| \, .
\]
Equivalently,
\[
\hat{P}^\Delta(U,\mathbf{y}) \ = \ \mathcal{P}(\Delta)\cap \{\mathbf{h} \in \mathbb{R}^n \colon A_U\mathbf h = \hat{\mathbf{y}}\} \, ,
\]
where $\hat{\mathbf{y}}\in A_U \mathcal{P}(\Delta)$ is the unique point of minimal distance to $\mathbf{y}$. In particular, the solution set $\hat{P}^\Delta(U,\mathbf{y})$ is a non-empty polyhedron in the parameter space $\mathbb{R}^n$.
\end{theorem}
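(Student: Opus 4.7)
The plan is to reduce the minimization over $\mathcal{P}(\Delta)$ to standard convex projection onto a polyhedron by exploiting the linear relation $h_{P(\mathbf{h})}(\mathbf{u})=\langle\mathbf{h},[\mathbf{u}]^\Delta\rangle$ stated just before Lemma~\ref{lem:Hausdorffdist}. The crucial point is that this identity holds for \emph{every} direction $\mathbf{u}\in\mathbb{R}^d$ whenever $P(\mathbf{h})\in\mathcal{P}(\Delta)$: since the normal fan $\mathcal{N}(P(\mathbf{h}))$ is a coarsening of $\Delta$, the cone $\sigma(\mathbf{u})\in\Delta$ whose relative interior contains $\mathbf{u}$ is entirely contained in some normal cone $N_F(P(\mathbf{h}))$, on which Lemma~\ref{lem:hPlinear} says $h_{P(\mathbf{h})}$ is linear. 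Writing $\mathbf{u}=\sum_{k\in I_{\sigma(\mathbf{u})}}\lambda_k\mathbf{v}_k$ and using $h_{P(\mathbf{h})}(\mathbf{v}_k)=h_k$ then gives the inner-product formula, with the edge case $\mathbf{u}=\mathbf{0}$ handled by $[\mathbf{0}]^\Delta=\mathbf{0}$.

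Applied row by row, this identity yields $(A_U\mathbf{h})_i=h_{P(\mathbf{h})}(\mathbf{u}^{(i)})$ for all $i$, so
\[
\frac{1}{m}\sum_{i=1}^m\bigl(h_{P(\mathbf{h})}(\mathbf{u}^{(i)})-y^{(i)}\bigr)^2 \ = \ \frac{1}{m}\|A_U\mathbf{h}-\mathbf{y}\|^2
\]
for every $P(\mathbf{h})\in\mathcal{P}(\Delta)$. Minimizing the left-hand side over $\mathcal{P}(\Delta)$ is therefore equivalent to minimizing $\|A_U\mathbf{h}-\mathbf{y}\|$ over the same set, which establishes the first claimed equality and exhibits $\hat{P}^\Delta(U,\mathbf{y})$ as the solution set of a convex quadratic program (the objective is convex quadratic and $\mathcal{P}(\Delta)$ is a polyhedral cone by Proposition~\ref{prop:wallcrossing}).

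For the second formulation, observe that $\mathcal{P}(\Delta)\subseteq\mathbb{R}^n$ is a polyhedron, hence so is its linear image $A_U\mathcal{P}(\Delta)\subseteq\mathbb{R}^m$; in particular this image is closed and convex. Consequently, the nearest-point projection of $\mathbf{y}$ onto $A_U\mathcal{P}(\Delta)$ exists and is unique, giving a well-defined $\hat{\mathbf{y}}\in A_U\mathcal{P}(\Delta)$. A vector $\mathbf{h}\in\mathcal{P}(\Delta)$ minimizes $\|A_U\mathbf{h}-\mathbf{y}\|$ if and only if $A_U\mathbf{h}=\hat{\mathbf{y}}$, so
\[
\hat{P}^\Delta(U,\mathbf{y}) \ = \ \mathcal{P}(\Delta)\cap\{\mathbf{h}\in\mathbb{R}^n\colon A_U\mathbf{h}=\hat{\mathbf{y}}\},
\]
which is non-empty (by construction of $\hat{\mathbf{y}}$) and is the intersection of a polyhedron with an affine subspace, hence a polyhedron in $\mathbb{R}^n$.

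The only real subtlety is justifying the two facts used implicitly: the linear identity $h_{P(\mathbf{h})}(\mathbf{u})=\langle\mathbf{h},[\mathbf{u}]^\Delta\rangle$ for arbitrary $\mathbf{u}$, and the closedness of $A_U\mathcal{P}(\Delta)$. The first follows from the coarsening property of normal fans together with Lemma~\ref{lem:hPlinear}; the second is the standard fact that polyhedra are preserved under linear maps (a direct consequence of Fourier--Motzkin elimination, or of the equivalence between $\mathcal{V}$- and $\mathcal{H}$-representations of polyhedral cones). Both are routine, so no genuine obstacle remains.
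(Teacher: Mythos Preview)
Your proof is correct and follows essentially the same route as the paper's own argument: both derive $(A_U\mathbf{h})_i=h_{P(\mathbf{h})}(\mathbf{u}^{(i)})$ from the piecewise linearity of the support function on $\Delta$, then identify the solution set with the preimage of the unique nearest point $\hat{\mathbf{y}}$ in $A_U\mathcal{P}(\Delta)$. If anything, your write-up is slightly more careful than the paper's in two places---you explicitly invoke the coarsening of normal fans to justify the linearity identity for all $P(\mathbf{h})\in\mathcal{P}(\Delta)$, and you argue closedness of $A_U\mathcal{P}(\Delta)$ via polyhedrality rather than appealing only to convexity.
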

\begin{proof}
Let $\mathbf{v}_1,\ldots, \mathbf{v}_n$ be the ray generators of $\Delta$. Then
\[
\mathbf{u}^{(i)}=\sum _{k= 1}^n[\mathbf{u}^{(i)}]_k \mathbf{v}_k \, .
\]
By Lemma~\ref{lem:hPlinear}, $h_P$ is linear on every cone $\sigma$ in $\Delta$. Therefore we have
\[
h_P (\mathbf{u}^{(i)})=\sum _{k=1}^n [\mathbf{u}^{(i)}]_k h_P( \mathbf{v}_k)=\sum _{k=1}^n(A_U)_{ik} h_k = (A_U\mathbf{h})_i \, .
\]
Thus it follows that
\begin{eqnarray}
\hat{P}^\Delta(U,\mathbf{y})&=&\argmin _{\mathbf{h}\in \mathcal{P}(\Delta)} \sum _{i=1}^m \left((A_U\mathbf{h})_i-y^{(i)}\right)^2\\
&=& \argmin _{P(\mathbf{h})\in \mathcal{P}(\Delta)}\| A_U\mathbf{h}-\mathbf{y}\|^2 \nonumber \, . \label{eq:AU}
\end{eqnarray}
Since $\mathcal{P}(\Delta)$ is a convex polyhedral cone so is its image under multiplication with $A_U$. By convexity, there is a unique point $\hat{\mathbf{y}}\in A_U\mathcal{P}(\Delta)$ such that $\|\hat{\mathbf{y}}-\mathbf{y}\|$ is minimal. The least-squares estimator $\hat{P}^\Delta(U,\mathbf{y})$ equals $\argmin _{P(\mathbf{h})\in \mathcal{P}(\Delta)}\| A_U\mathbf{h}-\mathbf{y}\|$ by Equation~\eqref{eq:AU}. Thus, a polyhedron $P=P(\mathbf{h})\in \mathcal{P}(\Delta)$ is in the solution set $\hat{P}^\Delta(U,\mathbf{y})$ if and only if $A_U \mathbf{h}=\hat{\mathbf{y}}$. It follows that $\hat{P}^\Delta(U,\mathbf{y}) = \mathcal{P}(\Delta)\cap \{\mathbf{h} \in \mathbb{R}^n \colon A_U\mathbf h = \hat{\mathbf{y}}\}$. In particular,  $\hat{P}^\Delta(U,\mathbf{y})$ is an intersection of a polyhedral cone and an affine hyperplane and therefore it is a polyhedron. It is non-empty since $\hat{\mathbf{y}}$ is contained in $A_U \mathcal{P}(\Delta)$.
\end{proof}
If the data provided are noiseless then the nearest point $\hat{\mathbf{y}}$ equals $\mathbf{y}$ and we obtain the following expression for the solution set.
\begin{corollary}
For a given polytopal simplicial fan $\Delta$ and a set of noiseless data $\{(\mathbf{u}^{(i)},y^{(i)})\}_{i=1}^m\subset \R^d \times \R$ 
\[
\hat{P}^\Delta(U,\mathbf{y}) = \{\mathbf{h}\in \mathcal{P}(\Delta)\colon A_U \mathbf{h}=\mathbf{y} \} \, .
\]
\end{corollary}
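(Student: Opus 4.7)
The plan is to apply Theorem~\ref{thm:main} directly and verify that the noiseless hypothesis collapses $\hat{\mathbf{y}}$ to $\mathbf{y}$. By Theorem~\ref{thm:main},
\[
\hat{P}^\Delta(U,\mathbf{y}) \ = \ \mathcal{P}(\Delta)\cap \{\mathbf{h}\in \mathbb{R}^n \colon A_U\mathbf{h}=\hat{\mathbf{y}}\},
\]
where $\hat{\mathbf{y}}$ is the unique point of $A_U\mathcal{P}(\Delta)$ nearest to $\mathbf{y}$. So all that needs to be shown is that in the noiseless setting one has $\hat{\mathbf{y}}=\mathbf{y}$.

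The noiseless assumption means $\varepsilon^{(i)}=0$ for every $i$, so that $y^{(i)}=h_P(\mathbf{u}^{(i)})$ for the (implicit) true shape $P=P(\mathbf{h}^*)\in \mathcal{P}(\Delta)$. The identity established inside the proof of Theorem~\ref{thm:main},
\[
h_P(\mathbf{u}^{(i)}) \ = \ \langle \mathbf{h}^*, [\mathbf{u}^{(i)}]^\Delta\rangle \ = \ (A_U\mathbf{h}^*)_i,
\]
immediately yields $\mathbf{y}=A_U\mathbf{h}^*\in A_U\mathcal{P}(\Delta)$. Since $\mathbf{y}$ already lies in the closed convex set $A_U\mathcal{P}(\Delta)$, the unique nearest point of this set to $\mathbf{y}$ is $\mathbf{y}$ itself, i.e.\ $\hat{\mathbf{y}}=\mathbf{y}$. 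Substituting this back into the formula from Theorem~\ref{thm:main} gives exactly the claimed description $\hat{P}^\Delta(U,\mathbf{y})=\{\mathbf{h}\in \mathcal{P}(\Delta)\colon A_U\mathbf{h}=\mathbf{y}\}$.

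There is no real obstacle here, since the corollary is a direct specialization of the theorem. The only point that requires a moment of care is making explicit the meaning of \textbf{noiseless} data, namely that such data comes from a polytope in $\mathcal{P}(\Delta)$ and hence $\mathbf{y}$ automatically belongs to the image $A_U\mathcal{P}(\Delta)$; without this membership the projection step in Theorem~\ref{thm:main} would not simplify.
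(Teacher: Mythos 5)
Your proposal is correct and matches the paper's (essentially one-line) justification: the noiseless assumption puts $\mathbf{y}=A_U\mathbf{h}^*$ in $A_U\mathcal{P}(\Delta)$, so the nearest-point projection fixes $\mathbf{y}$ and Theorem~\ref{thm:main} gives the claimed description. Your explicit remark that ``noiseless'' presupposes the data comes from some $P\in\mathcal{P}(\Delta)$ is a worthwhile clarification of the hypothesis the paper leaves implicit.
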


From Theorem~\ref{thm:main} it follows that for a given polytopal simplicial fan $\Delta$ the solution set $\hat{P}^\Delta(U,\mathbf{y})$ is a non-empty polyhedron. The next result characterizes when the solution set is unbounded.

\begin{proposition}\label{prop:charunbounded}
For a data set $\{(\mathbf{u}^{(i)},y^{(i)})\}_{i=1}^m\subset \R^d \times \R$ and a given polytopal simplicial fan $\Delta$ the following statements are equivalent.
\begin{itemize}
\item[(i)] The solution set $\hat{P}^\Delta(U,\mathbf{y})$ is unbounded.
\item[(ii)] There is a polyhedron $P=P(\mathbf{h})\in \mathcal{P}(\Delta)$, $\mathbf{h}\neq \mathbf{0}$, such that 
\[h_P(\mathbf{u}^{(i)})=0 \quad \text{for all }i=1,\ldots, m \, . \]
\item[(iii)] The cone $\ker A_U \cap \mathcal{P}(\Delta)$ is non-trivial.
\end{itemize}
\end{proposition}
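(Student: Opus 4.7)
The plan is to reduce all three conditions to a statement about the recession cone of the solution polyhedron, leveraging Theorem~\ref{thm:main} which already exhibits $\hat{P}^\Delta(U,\mathbf{y})$ as the intersection $\mathcal{P}(\Delta)\cap\{\mathbf{h}\in\mathbb{R}^n\colon A_U\mathbf{h}=\hat{\mathbf{y}}\}$ of a polyhedral cone with an affine subspace.

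First I would establish (i) $\Leftrightarrow$ (iii). A non-empty polyhedron is unbounded if and only if its recession cone is non-trivial. The recession cone of an intersection $C\cap L$, where $C$ is a polyhedral cone and $L$ is an affine subspace, is $C\cap L_0$, where $L_0$ is the linear subspace parallel to $L$. In our case $L_0=\ker A_U$, so the recession cone of $\hat{P}^\Delta(U,\mathbf{y})$ equals $\mathcal{P}(\Delta)\cap\ker A_U$. Since $\hat{P}^\Delta(U,\mathbf{y})$ is non-empty by Theorem~\ref{thm:main}, unboundedness of the solution set is equivalent to $\mathcal{P}(\Delta)\cap\ker A_U$ containing a non-zero vector, which is precisely (iii).

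Next I would establish (ii) $\Leftrightarrow$ (iii) by unpacking the definition of $A_U$. Recall that the rows of $A_U$ are the vectors $[\mathbf{u}^{(i)}]^\Delta$, so for any $\mathbf{h}\in\mathcal{P}(\Delta)$ we have $(A_U\mathbf{h})_i=\langle[\mathbf{u}^{(i)}],\mathbf{h}\rangle=h_{P(\mathbf{h})}(\mathbf{u}^{(i)})$, the last equality being the evaluation formula derived from Lemma~\ref{lem:hPlinear}. Therefore $\mathbf{h}\in\ker A_U$ if and only if $h_{P(\mathbf{h})}(\mathbf{u}^{(i)})=0$ for all $i=1,\ldots,m$, and intersecting with $\mathcal{P}(\Delta)$ identifies (ii) and (iii) verbatim.

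There is no real obstacle here; the work was already done in Theorem~\ref{thm:main}. The only subtlety worth noting in the write-up is that (ii) is stated in terms of a non-zero support vector $\mathbf{h}$ (not in terms of $P(\mathbf{h})$ being non-trivial as a polytope), which fits the identification $\mathcal{P}(\Delta)\hookrightarrow\mathbb{R}^n$ given by $P(\mathbf{h})\mapsto\mathbf{h}$ and makes the translation between (ii) and (iii) immediate once the evaluation formula $(A_U\mathbf{h})_i=h_{P(\mathbf{h})}(\mathbf{u}^{(i)})$ is in hand.
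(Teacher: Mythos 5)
Your proposal is correct and follows the same underlying strategy as the paper: both identify the unbounded directions of the solution polyhedron with the cone $\ker A_U\cap\mathcal{P}(\Delta)$, and both translate (ii) into (iii) via the evaluation formula $(A_U\mathbf{h})_i=h_{P(\mathbf{h})}(\mathbf{u}^{(i)})$ from Lemma~\ref{lem:hPlinear}. The only difference is packaging: you invoke the general identity $\recc(C\cap L)=\recc(C)\cap\recc(L)$ for a nonempty intersection of a closed convex cone with an affine subspace, whereas the paper verifies both inclusions by hand (adding $t\mathbf{h}$ to a solution $\mathbf{g}$ for one direction, and taking the limit $\lim_{t\to\infty}\tfrac{1}{t}(\mathbf{g}+t\mathbf{h})=\mathbf{h}$ inside the closed cone $\mathcal{P}(\Delta)$ for the other). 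Your version is slightly more streamlined at the cost of quoting a standard fact; the paper's is self-contained. Both are fine.
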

\begin{proof}
For all $P=P(\mathbf{h})\in \mathcal{P}(\Delta)$, $h_P(\mathbf{u}^{(i)})=(A_U\mathbf{h})_i$ as argued in the proof of Theorem~\ref{thm:main}. Thus $h_P(\mathbf{u}^{(i)})=0$ for all $i$ if and only if $\mathbf{h}\in \ker A_U$. This proves the equivalence of (ii) and (iii). By Theorem~\ref{thm:main}, $\hat{P}^\Delta(U,\mathbf{y})$ is a non-empty polyhedron. Let $\mathbf{g}$ be an element in $\hat{P}^\Delta(U,\mathbf{y})$ and let $\hat{\mathbf{y}}$ be the unique point of minimal distance to $\mathbf{y}$ in $A_U\mathcal{P}(\Delta)$. If we assume that condition (iii) is satisfied then there exists $\mathbf{h}\in \mathcal{P}(\Delta)$, $\mathbf{h}\neq \mathbf{0}$, such that $A_U \mathbf{h}=\mathbf{0}$. It follows that $A_U(\mathbf{g}+t\mathbf{h})=A_U\mathbf{g}+tA_U\mathbf{h}=\hat{\mathbf{y}}+t\mathbf{0}=\hat{\mathbf{y}}$ for all $t\geq 0$. Furthermore, since $\mathcal{P}(\Delta)$ is a convex cone, $\mathbf{g}+t\mathbf{h}$ is in $\mathcal{P}(\Delta)$ for all $t\geq 0$. In particular, $\mathbf{g}+t\mathbf{h}$ is in $\hat{P}^\Delta(U,\mathbf{y})$ by Theorem~\ref{thm:main} for all $t\geq 0$ and therefore $\hat{P}^\Delta(U,\mathbf{y})$ is unbounded, that is, condition (i) is satisfied. Vice versa, if we assume that $\hat{P}^\Delta(U,\mathbf{y})$ is unbounded, then, by convexity, there exists a $\mathbf{g}\in \hat{P}^\Delta(U,\mathbf{y})$ and an $\mathbf{h}\in \mathbb{R}^n$ such that $\mathbf{g}+t\mathbf{h}\in \mathcal{P}(\Delta)$ and $A_U(\mathbf{g}+t\mathbf{h})=\hat{\mathbf{y}}$ for all $t\geq 0$. By linearity it follows that $\mathbf{h}\in \ker A_U$. Furthermore, since $\mathcal{P}(\Delta)$ is a closed convex cone, $1/t (\mathbf{g}+t\mathbf{h})\in \mathcal{P}(\Delta)$ for all $t\geq 0$ and thus also $\lim _{t\rightarrow \infty}1/t (\mathbf{g}+t\mathbf{h})=\mathbf{h}$ is in $\mathcal{P}(\Delta)$. This shows that (iii) is satisfied.
\end{proof}
\begin{example}\label{ex:notunique}
Let $\Delta$ be the hexagonal fan as in Example~\ref{ex:1} with generators $\mathbf{v}_1,\ldots, \mathbf{v}_6$. For all $i\in \mathbb{Z}/6\mathbb{Z}$ let $\mathbf{u}^{(i)}=\mathbf{v}_i+\mathbf{v}_{i+1}$ and let $y^{(i)}=2$. We obtain
\[
A_U \ = \ \begin{bmatrix}
1&1&0&0&0&0\\
0&1&1&0&0&0\\
0&0&1&1&0&0\\
0&0&0&1&1&0\\
0&0&0&0&1&1\\
1&0&0&0&0&1
\end{bmatrix}
\, .
\]
The matrix $A_U$ is of rank $5$ and its kernel is spanned by $\mathbf{z}=(1, -1, 1, -1, 1, -1)^\mathsf{T}$. Since the row sums of $A_U$ are all equal to $2$ we obtain that $P(\mathbf{1}^\mathsf{T})$ is contained in the solution set. By Theorem~\ref{thm:main} the set of solutions is equal to
\[
\{\mathbf{h}\in \mathbb{R}^6\colon h_{i}+h_{i+2}-h_{i+1}\geq 0\}\cap \{\mathbf{1}^\mathsf{T}+\lambda \mathbf{z}^\mathsf{T} \colon \lambda\in \mathbb{R}\} \, .
\]
Thus, a polytope $P(\mathbf{1}^\mathsf{T}+\lambda \mathbf{z}^\mathsf{T})$ is in $\mathcal{P}^\Delta (U,\mathbf{y})$ if and only if $\lambda$ satisfies the inequalities
\begin{eqnarray*}
(1+\lambda)+(1+\lambda)&\geq& 1-\lambda \, , \\
(1-\lambda)+(1-\lambda)&\geq& 1+\lambda \, .
\end{eqnarray*}
These inequalities are equivalent to $-\frac{1}{3}\leq \lambda \leq \frac{1}{3}$. Therefore,
\[
\hat{P}^\Delta(U,\mathbf{y})=\conv \left\{ \left(\frac{4}{3}, \frac{2}{3}, \frac{4}{3}, \frac{2}{3}, \frac{4}{3}, \frac{2}{3}\right), \left(\frac{2}{3}, \frac{4}{3}, \frac{2}{3}, \frac{4}{3}, \frac{2}{3}, \frac{4}{3}\right)\right \} \, 
\]
is a segment. It can be seen that its two endpoints $P\left(\mathbf{1}^\mathsf{T}\pm 1/3\ \mathbf{z}^\mathsf{T}\right)$ are triangles.
\end{example}
If we remove the restriction on the normal fan and consider more generally the set of all polytopes with fixed facet directions $V=\{\mathbf{v}_1,\ldots, \mathbf{v}_n\} \subset \mathbb{R}^d$ we obtain the following corollary from Theorem~\ref{thm:main}.
\begin{corollary}\label{cor:piecewisequadratic}
Let $V=\{\mathbf{v}_1,\ldots, \mathbf{v}_n\}$ be positively spanning vectors in $\mathbb{R}^d$. Then for a given data set $\{(\mathbf{u}^{(i)},y^{(i)})\}_{i=1}^m\subset \R^d \times \R$ the least-squares estimator
\[
\argmin _{\mathbf{h}\in \clir (V)} \frac{1}{m}\sum _{i=1}^m \left(h_{P(\mathbf{h})}(\mathbf{u}^{(i)})-y^{(i)}\right)^2
\]
is given by a piecewise quadratic program, that is, the objective function is piecewise quadratic.
\end{corollary}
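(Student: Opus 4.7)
My plan is to directly leverage the polyhedral subdivision of $\clir(V)$ by deformation cones that was recorded in Section~\ref{sec:prelim}, together with Theorem~\ref{thm:main} applied one cone at a time. Specifically, the preliminaries state that
\[
\clir(V) \;=\; \bigcup_{\Delta} \mathcal{P}(\Delta),
\]
where the union runs over all simplicial polytopal fans $\Delta$ with ray generators $V$, and that this collection forms a polyhedral subdivision of $\clir(V)$. This gives the piecewise structure for free; what remains is to verify that the objective is quadratic on each piece.

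First I would fix one simplicial polytopal fan $\Delta$ with ray generators $V$ and restrict attention to $\mathbf{h}\in \mathcal{P}(\Delta)$. For such $\mathbf{h}$, the normal fan of $P(\mathbf{h})$ is a coarsening of $\Delta$, so by Lemma~\ref{lem:hPlinear} the support function $h_{P(\mathbf{h})}$ is linear on every cone of $\Delta$; in particular, for each data direction $\mathbf{u}^{(i)}$ with carrier $\sigma(\mathbf{u}^{(i)})\in \Delta$ we have $h_{P(\mathbf{h})}(\mathbf{u}^{(i)}) = \langle \mathbf{h},[\mathbf{u}^{(i)}]^{\Delta}\rangle = (A_U^\Delta \mathbf{h})_i$. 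Exactly as in the proof of Theorem~\ref{thm:main} it follows that, on $\mathcal{P}(\Delta)$, the objective function
\[
\frac{1}{m}\sum_{i=1}^m \bigl(h_{P(\mathbf{h})}(\mathbf{u}^{(i)})-y^{(i)}\bigr)^2 \;=\; \frac{1}{m}\,\| A_U^\Delta \mathbf{h}-\mathbf{y}\|^2
\]
is a convex quadratic in $\mathbf{h}$.

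Combining this over the finitely many full-dimensional pieces $\mathcal{P}(\Delta)$ of $\clir(V)$ shows that the objective is piecewise quadratic with respect to the polyhedral subdivision $\{\mathcal{P}(\Delta)\}_\Delta$: on each piece the matrix $A_U^\Delta$ is constant, and the pieces agree on common boundaries because any two $\Delta,\Delta'$ share the refinement $\Delta\wedge \Delta'$ along their common faces, and the value $h_{P(\mathbf{h})}(\mathbf{u}^{(i)})$ depends only on $\mathbf{h}$, not on the choice of enclosing fan. Hence the global least-squares problem decomposes into finitely many convex quadratic programs
\[
\min_{\mathbf{h}\in \mathcal{P}(\Delta)} \|A_U^\Delta \mathbf{h}-\mathbf{y}\|^2,
\]
one for each simplicial polytopal fan $\Delta$ with rays $V$, and its argmin is given by taking the minimizers of those pieces whose optimal value is smallest. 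The only mildly delicate point to articulate clearly is the consistency of the piecewise definition across shared boundaries, but this follows automatically from $\mathbf{h}\mapsto h_{P(\mathbf{h})}(\mathbf{u}^{(i)})$ being a well-defined continuous function on $\clir(V)$, so no serious obstacle arises.
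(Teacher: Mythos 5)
Your proposal is correct and follows essentially the same route as the paper: it uses the subdivision of $\clir(V)$ by the deformation cones $\mathcal{P}(\Delta)$ together with the identity $h_{P(\mathbf{h})}(\mathbf{u}^{(i)})=(A_U^\Delta\mathbf{h})_i$ from the proof of Theorem~\ref{thm:main} to conclude that the objective is quadratic on each cone, hence piecewise quadratic. The extra remarks on consistency across shared boundaries and the decomposition into finitely many convex quadratic programs are fine but not needed beyond what the paper records.
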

\begin{proof}
In the proof of Theorem~\ref{thm:main} we argued that for fixed simplicial normal fan $\Delta$ and for all $\mathbf{h}\in \mathcal{P}(\Delta)$
\[
(h_{P(\mathbf{h})} (\mathbf{u}^{(i)}))_{i=1,\ldots m} \ = \ A_U^{\Delta} \mathbf{h} \, .
\]
Further, the set $\clir (V)$, which parametrizes all polytopes with facet directions $V$, is subdivided by the family of all deformation cones $\mathcal{P}(\Delta)$ where $\Delta$ is a simplicial polytopal fan with ray generators $V$. Thus, for fixed directions $U=(\mathbf{u}^{(1)},\ldots, \mathbf{u}^{(m)})$,
\begin{eqnarray*}
\clir (V) &\rightarrow& \mathbb{R}^m\\
\mathbf{h}&\mapsto&(h_{P(\mathbf{h})} (\mathbf{u}^{(i)}))_{i=1,\ldots m}
\end{eqnarray*}
defines a piecewise linear map where the regions of linearity are given by the deformation cones $\mathcal{P}(\Delta)$. In particular, $\frac{1}{m}\sum _{i=1}^m \left(h_{P(\mathbf{h})}(\mathbf{u}^{(i)})-y^{(i)}\right)^2$ is a piecewise quadratic function.
\end{proof}

The number of regions of linearity in the piecewise quadratic program in Corollary~\ref{cor:piecewisequadratic} is upper bounded by the number of deformation cones, which, in general, can be large. However, in special situations there might be fewer regions of linearity. For example, if $m = n$ and $\mathbf{u}^{(i)} = \mathbf{v}_i$ for all $i=1, \ldots, m$, then $A_U^{\Delta}$ is equal to the identity matrix for all possible normal fans $\Delta$.  Therefore, there is only one region of linearity and the program becomes a quadratic program.

\subsection{Uniqueness}\label{subsec:uniqueness}
In this section let again $\Delta$ be a fixed simplicial polytopal fan in $\mathbb{R}^d$. We consider the case that $\hat{P}^\Delta(U,\mathbf{y})$ consists of only one element. In this case the reconstruction is unique and we also call $\hat{P}^\Delta(U,\mathbf{y})$ unique. From Theorem~\ref{thm:main} it follows that  $\hat{P}^\Delta(U,\mathbf{y})$ is unique for all $\mathbf{y}\in \mathbb{R}^d$ if $\rank A_U=n$. For $\mathbf{y}\in \relint A_U \mathcal{P}(\Delta)$ the reverse implication is true as well. 
\begin{proposition}\label{prop:uniquenessy}
For all $\mathbf{y}\in \relint A_U \mathcal{P}(\Delta)$ the solution set $\hat{P}^\Delta(U,\mathbf{y})$ is unique if and only if $\rank A_U =n$.
\end{proposition}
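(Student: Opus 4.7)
The plan is to reduce both directions to a statement about fibers of the linear map $\mathbf{h}\mapsto A_U\mathbf{h}$ on $\mathcal{P}(\Delta)$, using the characterization in Theorem~\ref{thm:main} that
\[
\hat{P}^\Delta(U,\mathbf{y}) \ = \ \mathcal{P}(\Delta)\cap \{\mathbf{h}\in\mathbb{R}^n \colon A_U\mathbf{h}=\hat{\mathbf{y}}\}.
\]
The $(\Leftarrow)$ direction is essentially the observation already recorded in the paragraph preceding the proposition: if $\rank A_U = n$, then $A_U$ is injective, so any fiber of $A_U$ is at most a singleton and the intersection above contains at most one point. Since the solution set is nonempty by Theorem~\ref{thm:main}, it is a singleton, independent of any assumption on $\mathbf{y}$.

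For the $(\Rightarrow)$ direction I would argue by contrapositive. The hypothesis $\mathbf{y}\in \relint A_U \mathcal{P}(\Delta)$ gives in particular $\mathbf{y}\in A_U\mathcal{P}(\Delta)$, so the minimum distance point $\hat{\mathbf{y}}$ of Theorem~\ref{thm:main} equals $\mathbf{y}$ itself, and hence
\[
\hat{P}^\Delta(U,\mathbf{y}) \ = \ \{\mathbf{h}\in \mathcal{P}(\Delta) \colon A_U\mathbf{h} = \mathbf{y}\}.
\]
Next I would invoke the standard convex-analytic identity $\relint(A_U\mathcal{P}(\Delta)) = A_U(\relint \mathcal{P}(\Delta))$, which holds because $\mathcal{P}(\Delta)$ is a (nonempty) convex set and $A_U$ is linear. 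This yields some $\mathbf{h}_0 \in \relint \mathcal{P}(\Delta)$ with $A_U\mathbf{h}_0 = \mathbf{y}$. Because $\mathcal{P}(\Delta)$ is $n$-dimensional (as stated in the paragraph preceding Proposition~\ref{prop:wallcrossing}), its relative interior coincides with its interior in $\mathbb{R}^n$, so there is a Euclidean ball $B_\varepsilon(\mathbf{h}_0)\subset \mathcal{P}(\Delta)$.

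Now suppose for contradiction that $\rank A_U < n$. Then $\ker A_U$ contains a nonzero vector $\mathbf{z}$, and for all $t\in\mathbb{R}$ with $|t|$ sufficiently small we have $\mathbf{h}_0 + t\mathbf{z}\in B_\varepsilon(\mathbf{h}_0)\subset \mathcal{P}(\Delta)$ together with $A_U(\mathbf{h}_0+t\mathbf{z}) = A_U\mathbf{h}_0 = \mathbf{y}$. This exhibits infinitely many distinct points in $\hat{P}^\Delta(U,\mathbf{y})$, contradicting uniqueness. The only delicate step in this plan is the passage $\mathbf{y}\in \relint A_U\mathcal{P}(\Delta) \Rightarrow \mathbf{y}\in A_U(\relint \mathcal{P}(\Delta))$, and that is precisely where the relative-interior hypothesis on $\mathbf{y}$ is used; without it one could have $\mathbf{y}$ on the relative boundary of $A_U\mathcal{P}(\Delta)$ and the above translate $\mathbf{h}_0\pm t\mathbf{z}$ might leave $\mathcal{P}(\Delta)$, so the argument genuinely requires the assumption on $\mathbf{y}$.
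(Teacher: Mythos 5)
Your proposal is correct and follows essentially the same route as the paper: both directions rest on the identification $\hat{P}^\Delta(U,\mathbf{y})=\mathcal{P}(\Delta)\cap\{\mathbf{h}\colon A_U\mathbf{h}=\mathbf{y}\}$ from Theorem~\ref{thm:main}, and the reverse implication uses exactly the paper's observation that $\mathbf{y}\in\relint A_U\mathcal{P}(\Delta)$ produces a preimage $\mathbf{h}_0$ in the interior of the full-dimensional cone $\mathcal{P}(\Delta)$, so that a nontrivial kernel of $A_U$ yields a positive-dimensional piece of the fiber inside $\mathcal{P}(\Delta)$. Your explicit ball-plus-kernel-vector argument is just a slightly more detailed spelling-out of the paper's statement that the fiber ``intersects $\mathcal{P}(\Delta)$ in its interior.''
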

\begin{proof}
 To see the reverse implication, we observe that, by convexity and since $\mathcal{P}(\Delta)$ is full-dimensional, for all $\mathbf{y}\in \relint A_U \mathcal{P}(\Delta)$ there exists a $\mathbf{h}_0\in \inte \mathcal{P}(\Delta)$ such that $A_U \mathbf{h}_0 = \mathbf{y}$. If $\rank A_U<n$ then multiplication with $A_U$ is not injective and the affine space $\{\mathbf{h}\in \mathbb{R}^n \colon A_U \mathbf{h}=\mathbf{y}\}$ has dimension at least $1$. This space contains $\mathbf{h}_0$ and thus intersects $\mathcal{P}(\Delta)$ in its interior. Since $\hat{P}^\Delta(U,\mathbf{y})=\mathcal{P}(\Delta)\cap \{\mathbf{h}\in \mathbb{R}^n \colon A_U \mathbf{h}=\mathbf{y}\}$ by Theorem ~\ref{thm:main}, the solution set $\hat{P}^\Delta(U,\mathbf{y})$ contains more than one element in this case.
\end{proof}
Let
\[
\mathcal{U} \ = \ \{U\in \mathbb{R}^{m\times d} \colon | \hat{P}^\Delta (U,\mathbf{y}) | =1 \text{ for all }\mathbf{y}\in\mathbb{R}^m \}  \subset \mathbb{R}^{m\times d}  \, 
\]
denote the set of all $U\in \mathbb{R}^{m\times d}$ such that $\hat{P}^\Delta(U,\mathbf{y})$ is unique for all $\mathbf{y}\in \mathbb{R}^m$. Observe that, by Theorem~\ref{thm:main} and Proposition~\ref{prop:uniquenessy}, a matrix $U\in \mathbb{R}^{m\times d}$ is in $\mathcal{U}$ if and only if $\rank A_U=n$. 

Further, for every matrix $U\in \mathbb{R}^{m\times d}$ we define a bipartite graph $G_U$ with vertex set $[n]\sqcup [m]$ in the following way. If $\mathbf{u}^{(1)},\ldots, \mathbf{u}^{(m)}$ denote the rows of $U$, then $i\in [n]$ and $j\in [m]$ are connected by an edge if and only if $[\mathbf{u}^{(j)}]_i >0$.

The following theorem provides a combinatorial characterization of the set $\mathcal{U}$.

\begin{theorem}\label{thm:matching}
The set $\mathcal{U}$ has non-empty interior for $m\geq n$. Moreover, for a generic matrix $U\in \mathbb{R}^{m\times d}$, $U\in \mathcal{U}$ if and only if $G_U$ has a matching of size $n$.
\end{theorem}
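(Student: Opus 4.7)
My plan reduces everything to a rank problem via the observation in the paragraph immediately preceding the theorem: $U\in\mathcal U$ if and only if $\rank A_U^{\Delta}=n$. Thus I only need to analyze when the sparse $m\times n$ matrix $A_U$ has full column rank.

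For the non-empty interior claim when $m\ge n$, I exhibit an explicit open subset of $\mathcal U$. Every ray $\mathbf v_i$ of the (complete) polytopal fan $\Delta$ is a face of some top-dimensional cone, so I pick for each $i\in[n]$ a top-dimensional cone $\sigma_i$ with $\mathbf v_i$ among its generators and choose $\mathbf u^{(i)}\in\relint\sigma_i$ close to $\mathbf v_i$; the remaining rows $\mathbf u^{(j)}$, $j>n$, are arbitrary. Continuity of $\mathbf u\mapsto[\mathbf u]$ makes $[\mathbf u^{(i)}]$ close to $e_i$, so the $n\times n$ submatrix of $A_U$ on the first $n$ rows is close to the identity and hence invertible; invertibility is open, so a whole neighborhood of $U$ lies in $\mathcal U$. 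The forward implication ``$\rank A_U=n\Rightarrow G_U$ has a matching of size $n$'' holds for every $U$: if $\det A_U|_J\neq 0$ for some $J\subset[m]$ with $|J|=n$, a nonvanishing Leibniz term provides a bijection $\pi:[n]\to J$ with $[\mathbf u^{(\pi(i))}]_i\neq 0$ for all $i$, and since the entries of $[\mathbf u]$ are nonnegative these values are positive, giving a matching.

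The converse, the genuinely ``generic'' direction, is where the main work lies. Let $\Omega\subset\mathbb R^{m\times d}$ be the complement of the finite hyperplane arrangement whose hyperplanes are the walls of the cones of $\Delta$ pulled back to each of the $m$ row-factors; $\Omega$ is open and dense. I partition $\Omega$ into open cells $\mathcal R_T=\{U:\mathbf u^{(i)}\in\relint\sigma_i\text{ for all }i\}$ indexed by tuples $T=(\sigma_1,\dots,\sigma_m)$ of top-dimensional cones of $\Delta$. On each $\mathcal R_T$ the graph $G_U$ is a fixed graph $G_T$ and every entry of $A_U$ is either identically zero or a fixed linear form in $\mathbf u^{(i)}$, so each $\det A_U|_J$ is a polynomial in $U$. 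If $G_T$ has no matching of size $n$, then for every $J$ of size $n$ and every bijection $\pi:[n]\to J$ some factor $[\mathbf u^{(\pi(i))}]_i$ is identically zero on $\mathcal R_T$, so $\det A_U|_J$ vanishes identically and both sides of the iff fail. If $G_T$ admits a matching realized by some $J$ and a bijection $\pi:[n]\to J$ with $[\mathbf u^{(\pi(i))}]_i>0$, I claim $\det A_U|_J$ is not identically zero on $\mathcal R_T$: evaluating at the boundary point $U^\ast\in\overline{\mathcal R_T}$ obtained by sending $\mathbf u^{(\pi(i))}\to\mathbf v_i$ while keeping the other rows inside their cones, continuity of $\mathbf u\mapsto[\mathbf u]$ gives $[\mathbf u^{(\pi(i))}]=e_i$, so $A_{U^\ast}|_J$ becomes a permutation matrix and $\det A_{U^\ast}|_J=\pm 1\neq 0$. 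The zero locus of this nonzero polynomial is a proper algebraic, hence measure-zero, subset of $\mathcal R_T$; on its complement $\rank A_U=n$. Taking the union over all cells $T$ yields a dense open subset of $\mathbb R^{m\times d}$ on which the iff holds.

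The main obstacle I anticipate is the last step: $A_U$ is only \emph{piecewise} linear in $U$, so I must restrict to one cell $\mathcal R_T$ at a time before invoking ``a nonzero polynomial vanishes only on a measure-zero set''. The natural evaluation point $U^\ast$ that makes $A_{U^\ast}|_J$ a permutation matrix lies on the boundary $\overline{\mathcal R_T}\setminus\mathcal R_T$, and combining the polynomiality of $\det A_U|_J$ on $\mathcal R_T$ with the global continuity of $\mathbf u\mapsto[\mathbf u]$ is the key to concluding that this boundary evaluation reflects a genuine nonzero value of the polynomial on $\mathcal R_T$.
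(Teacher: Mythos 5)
Your proposal is correct and its core argument is essentially the paper's: both reduce uniqueness to $\rank A_U=n$, decompose $\mathbb{R}^{m\times d}$ into the cells on which $U\mapsto A_U$ is linear, show every maximal minor vanishes identically when no size-$n$ matching exists, and certify a nonzero minor polynomial when a matching exists by evaluating at the boundary point where the matched rows become $\mathbf{v}_i$ and the submatrix degenerates to a permutation matrix (your direct Leibniz-term argument is just an unpacked form of the paper's appeal to Edmonds' theorem). The only genuine divergence is the non-empty-interior claim: the paper derives it from the subcomplex $\Upsilon$ of cells without a matching, using Hall's theorem on the ray--facet incidence graph to show $\Upsilon\subsetneq\Delta^m$ for $m\geq n$, whereas you give a more direct and arguably cleaner construction, perturbing the first $n$ rows toward the rays so that an $n\times n$ submatrix of $A_U$ is near the identity and invoking openness of invertibility; both are valid.
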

Here and in the following, \textbf{generic} means that the statement is correct except for a subset of matrices of Lebesgue measure zero. In the following we first discuss geometric properties of the set $\mathcal{U}$. These form the base of our proof of Theorem \ref{thm:matching} below.

A subset of $\mathbb{R}^d$ is called \textbf{semi-algebraic} if it  is a finite boolean combination of sets of the form $\{\mathbf{x} \in \mathbb{R}^d : f_i(\mathbf{x}) \geq 0 \text{ for } i= 1, \ldots, m\}$, where $f_1, \ldots, f_n$ are polynomials in $\mathbb{R}[x_1, \ldots, x_d]$. 

\begin{proposition}\label{prop:injective}
The set $\mathcal{U}\subseteq \mathbb{R}^{m\times d}$ is semi-algebraic.
\end{proposition}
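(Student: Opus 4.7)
The plan is to show that $\mathcal{U}$ can be described as a finite Boolean combination of sets cut out by polynomial (in fact, linear) equalities and inequalities in the entries of $U$. By Theorem~\ref{thm:main} together with Proposition~\ref{prop:uniquenessy}, $U \in \mathcal{U}$ is equivalent to $\operatorname{rank} A_U^\Delta = n$, so it suffices to prove that the set of $U\in\mathbb{R}^{m\times d}$ with this rank condition is semi-algebraic.

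First I would stratify $\mathbb{R}^{m\times d}$ according to the carriers of the rows. For any tuple $\tau = (\tau_1, \dots, \tau_m) \in \Delta^m$, let
\[
C_\tau \;=\; \{U \in \mathbb{R}^{m\times d} : \mathbf{u}^{(i)} \in \operatorname{relint}(\tau_i) \text{ for all } i=1,\dots,m\}.
\]
Because each cone $\tau_i$ is defined by finitely many linear equalities and strict linear inequalities in the ambient coordinates, each $C_\tau$ is a semi-algebraic subset of $\mathbb{R}^{m\times d}$. The family $\{C_\tau\}_{\tau\in \Delta^m}$ is finite (since $\Delta$ has finitely many cones) and partitions $\mathbb{R}^{m\times d}$.

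Next, on a fixed stratum $C_\tau$, I claim that the entries of $A_U^\Delta$ are linear functions of $U$. Indeed, for $\mathbf{u}^{(i)} \in \operatorname{relint}(\tau_i)$ the vector $[\mathbf{u}^{(i)}]^\Delta$ is obtained by solving the linear system $\mathbf{u}^{(i)} = \sum_{k\in I_{\tau_i}} \lambda_k \mathbf{v}_k$ whose coefficient matrix is fixed (and invertible on the span of the $\mathbf{v}_k$, $k\in I_{\tau_i}$, since $\Delta$ is simplicial); hence the $\lambda_k$ are linear in $\mathbf{u}^{(i)}$, and the remaining entries of $[\mathbf{u}^{(i)}]^\Delta$ are zero. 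Consequently, each of the $\binom{m}{n}\binom{n}{n} = \binom{m}{n}$ maximal minors of $A_U^\Delta$ is a polynomial in the entries of $U$ when restricted to $C_\tau$.

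The rank condition $\operatorname{rank} A_U^\Delta = n$ is equivalent to at least one of these maximal minors being nonzero. Therefore
\[
\mathcal{U} \cap C_\tau \;=\; C_\tau \cap \Bigl\{U : \bigvee_{J \in \binom{[m]}{n}} \det\bigl((A_U^\Delta)_{J,\,[n]}\bigr) \neq 0\Bigr\}
\]
is a finite Boolean combination of sets of the form $\{f \ge 0\}$, $\{f \le 0\}$, $\{f \ne 0\}$ with $f$ polynomial in $U$, hence is semi-algebraic. Taking the finite union over all $\tau\in\Delta^m$ gives
\[
\mathcal{U} \;=\; \bigcup_{\tau \in \Delta^m} \bigl(\mathcal{U} \cap C_\tau\bigr),
\]
which is semi-algebraic, as required. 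The only subtle point — and the place that needs care in the write-up — is checking that the dependence of $A_U^\Delta$ on $U$ is genuinely polynomial (linear) once the combinatorial type $\tau$ is fixed, so that the rank condition descends to a semi-algebraic condition on each stratum; this is handled by the simpliciality of $\Delta$, which makes the barycentric-type coordinates on each maximal cone linear in $\mathbf{u}$.
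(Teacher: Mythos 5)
Your proof is correct and follows essentially the same route as the paper: both reduce membership in $\mathcal{U}$ to the condition $\rank A_U^\Delta = n$ and then exploit the piecewise linearity of $U\mapsto A_U^\Delta$ to express the non-vanishing of some maximal minor as a polynomial condition; you simply make the stratification of $\mathbb{R}^{m\times d}$ by carriers explicit, which the paper leaves implicit in the phrase ``piecewise linear.'' Your version also silently handles $m<n$ correctly, since the disjunction over $J\in\binom{[m]}{n}$ is then empty.
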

\begin{proof}
 The set $\mathcal{U}$ is the preimage of the set of all $m\times n$ matrices of rank $n$ under the map $\mathbb{R}^{m\times d}\rightarrow \mathbb{R}^{m\times n}, U\mapsto A_U$. If $m<n$ then $\rank A_U<n$ for all $U$ and thus $\mathcal{U}$ equals the empty set which is semi-algebraic. If $m\geq n$, the set of all $m\times n$ matrices of rank $n$ is equal to the complement of the set of all matrices of rank at most $n-1$ which is exactly the set of all matrices for which all maximal minors vanish. Since this is a semi-algebraic set so is its complement. It remains to show that $U\mapsto A_U$ is a semi-algebraic homomorphism. Indeed, by definition, the map $\mathbf{u}\mapsto [\mathbf{u}]$ is piecewise linear, and thus
\[
U \ = \ \left(\mathbf{u}^{(1)},\ldots, \mathbf{u}^{(m)}\right)^\mathsf{T} \longmapsto A_U \ = \ \left([\mathbf{u}^{(1)}],\ldots, [\mathbf{u}^{(m)}]\right)^\mathsf{T}
\]
is piecewise linear as well.
\end{proof}

The set of all graphs $G$ such that there exists a matrix $U\in \mathbb{R}^{m\times d}$ for which $G=G_U$ we denote by $\mathcal{G}$. Note that not every bipartite graph on $[n]\sqcup [m]$ arises in this way.
From the definition we obtain that
\[
\mathcal{G} \ = \ \{ G \subseteq K_{n,m} \colon \pos (\{v_i \colon i\in N(j)\})\in \Delta \text{ for all } j\in [m] \}
\]
where $K_{n,m}$ denotes the complete bipartite graph with vertex set $[n]\sqcup [m]$ and $N(j)\subseteq [n]$ the set of all neighbors of $j\in [m]$.

For every bipartite graph $G$ in $\mathcal{G}$ we define $
U_G = \{U\in \mathbb{R}^{m\times d} \colon G_U = G \}$. By definition, the set of all $U_G$ for $G\in \mathcal{G}$ partitions $\mathbb{R}^{m\times d}$. Moreover, we see that
\[
U_{G_{U}} \ = \ \relint \sigma (\mathbf{u}^{(1)}) \times \cdots \times \relint \sigma (\mathbf{u}^{(m)})
\]
where $U=(\mathbf{u}^{(1)},\ldots ,\mathbf{u}^{(1)})^\mathsf{T}$ and $\sigma (\mathbf{u}^{(i)})$ denotes the carrier of $\mathbf{u}^{(i)}$. In particular, $U_G$ is a relatively open polyhedral cone in $\mathbb{R}^{m\times d}$ for all $G\in \mathcal{G}$. Indeed, the closed cones $\{\overline{U_G}\subset \mathbb{R}^{m\times d} \colon G\in \mathcal{G}\}$ coincide with the cells in the polyhedral subdivision $\Delta^m$ of $\mathbb{R}^{m\times d}$.

We will now focus on the set of graphs $G$ in $\mathcal{G}$ for which a matching of size $n$ exists. This will lead us to a characterization of the uniqueness of the reconstruction $\hat{P}^\Delta(U,\mathbf{y})$.

Let $\Upsilon$ be the collection of all closed polyhedral cones $\overline{U_G}$ for which $G$ does not have a matching of size $n$.
\begin{lemma}
The collection $\Upsilon$ is a polyhedral subcomplex of $\Delta ^m$.
\end{lemma}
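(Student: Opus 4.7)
The plan is to verify the two defining conditions of a polyhedral subcomplex. Since $\Delta^m$ is by definition the polyhedral subdivision whose cells are the closed cones $\overline{U_G}$ with $G\in \mathcal{G}$, the inclusion $\Upsilon\subseteq \Delta^m$ is immediate from the definition of $\Upsilon$. The only substantive content is to show that $\Upsilon$ is closed under taking faces: whenever $\overline{U_G}\in \Upsilon$ and $\tau$ is a face of $\overline{U_G}$, we must have $\tau\in \Upsilon$.

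First I would describe the faces of $\overline{U_G}$ in purely combinatorial terms. Fix any $U=(\mathbf{u}^{(1)},\ldots,\mathbf{u}^{(m)})^{\mathsf T}\in U_G$; by the product description $U_G=\relint \sigma(\mathbf{u}^{(1)})\times\cdots\times \relint \sigma(\mathbf{u}^{(m)})$ noted just before the lemma, taking closures yields
\[
\overline{U_G} \ = \ \sigma(\mathbf{u}^{(1)}) \times \cdots \times \sigma(\mathbf{u}^{(m)}) \, .
\]
Maximizing a linear functional on a product of polyhedral cones decomposes coordinatewise, so the faces of a product of cones are exactly the products of faces of the factors. Because $\Delta$ is a simplicial fan, each face of $\sigma(\mathbf{u}^{(i)})$ is the cone spanned by a subset of its ray generators, and itself belongs to $\Delta$. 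Consequently every face of $\overline{U_G}$ has the form $\overline{U_{G'}}$ for a bipartite graph $G'\in \mathcal{G}$ obtained from $G$ by deleting some edges, i.e., with $G'\subseteq G$ as subgraphs of $K_{n,m}$.

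Second, I would invoke the elementary monotonicity of the matching number under edge deletion: every matching in $G'$ is also a matching in any supergraph $G\supseteq G'$, so the maximum matching size cannot increase when edges are removed. In particular, if $G$ admits no matching of size $n$, then neither does any subgraph $G'\subseteq G$. Combined with the face description above, this shows that every face of a cell $\overline{U_G}\in \Upsilon$ is again a cell in $\Upsilon$, which is precisely the subcomplex property.

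The only step requiring real care is the combinatorial face description; once the simpliciality of $\Delta$ is exploited to identify faces of each $\sigma(\mathbf{u}^{(i)})$ with subsets of its ray generators, the heredity under "no matching of size $n$" is immediate from the monotonicity of matching number, so I expect this to be the lightest obstacle in the argument.
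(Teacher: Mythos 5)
Your proposal is correct and follows essentially the same route as the paper: both use the product description $\overline{U_G}=\sigma(\mathbf{u}^{(1)})\times\cdots\times\sigma(\mathbf{u}^{(m)})$, identify faces combinatorially via simpliciality as edge-deletions in $G$, and conclude by monotonicity of the matching number. The only (immaterial) difference is that the paper reduces to checking facets, where deleting a single ray generator removes exactly one edge, while you treat arbitrary faces at once.
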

\begin{proof}
It suffices to show that for every $\overline{U_G}$ in $\Upsilon$ also every facet of $\overline{U_G}$ is contained in $\Upsilon$. Let $U=(\mathbf{u}^{(1)},\ldots ,\mathbf{u}^{(m)})^\mathsf{T}$ be a matrix such that $G_U=G$. Then every facet of $\overline{U}_G$ is of the form
\[
F=\sigma (\mathbf{u}^{(1)}) \times \cdots \times \sigma (\mathbf{u}^{(i-1)}) \times H\times \sigma (\mathbf{u}^{(i+1)}) \times \cdots \times  \sigma (\mathbf{u}^{(m)})
\]
where $H$ is a facet of $\sigma (\mathbf{u}^{(i)})$ for some $i$. Since $\Delta$ is a simplicial fan, there is a unique generator $\mathbf{v}_k$ of $\sigma (\mathbf{u}^{(i)})$ that is not a generator of $H$. In particular, $[\mathbf{u}]_k=0$ for all $\mathbf{u}\in H$ and we obtain $F=\overline{U_{G'}}$ where $G'$ arises from $G$ by deleting the edge between $i$ and $k$. In particular, if $G$ does not have a matching of size $n$ then neither does $G'$.

\end{proof}

Let $G_\Delta$ denote the ray-facet incidence graph of the simplicial fan $\Delta$ with $r$ facets $F_1,\ldots, F_r$. Then $G_\Delta$ is a bipartite graph with vertex set $[n]\sqcup [r]$ where $i\in [n]$ and $j\in [r]$ are connected by an edge if $v_i$ is a generator of the facet $F_j$. The following lemma will allow us to determine when the subcomplex $\Upsilon$ is properly contained in $\Delta^m$.
\begin{lemma}\label{lem:rayfacets}
The ray-facet incidence graph $G_\Delta$ of a simplicial fan $\Delta$ has a matching of size $n$.
\end{lemma}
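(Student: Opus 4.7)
The plan is to translate the problem via polytope duality and then apply Hall's marriage theorem. Since $\Delta$ is a simplicial polytopal fan, it is complete, so it is the normal fan of some polytope $P$; moreover, because every maximal cone of $\Delta$ is simplicial of dimension $d$, the polytope $P$ is simple. Under the duality between $\Delta$ and $P$, the $n$ rays of $\Delta$ correspond bijectively to the $n$ facets of $P$, the maximal cones of $\Delta$ (the facets $F_1,\ldots,F_r$) correspond to the vertices of $P$, and a ray $\mathbf{v}_i$ generates a given maximal cone $F_j$ precisely when the facet of $P$ associated to $\mathbf{v}_i$ contains the vertex of $P$ associated to $F_j$. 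Thus $G_\Delta$ is, up to relabelling, the vertex-facet incidence graph of $P$, and a matching of size $n$ amounts to injectively assigning to each facet of $P$ a vertex lying on it.

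With this reformulation in hand, I would verify Hall's condition on the facet side of $P$: for every collection $A$ of facets of $P$, the set $V(A):=\bigcup_{F\in A}V(F)$ of vertices lying on some facet in $A$ satisfies $|V(A)|\geq |A|$. The central step is a double count of the incidence number
\[
I \;:=\; |\{(v,F)\in V(A)\times A \colon v\in F\}|.
\]
On one hand, each facet of the $d$-polytope $P$ is itself a $(d-1)$-polytope and hence has at least $d$ vertices, so $I=\sum_{F\in A}|V(F)|\geq d|A|$. On the other hand, since $P$ is simple, every vertex of $P$ lies on exactly $d$ facets of $P$, so $I=\sum_{v\in V(A)}|\{F\in A : v\in F\}|\leq d|V(A)|$. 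Combining these two bounds yields $|A|\leq |V(A)|$, which is Hall's condition, and Hall's theorem then produces the desired matching of size $n$.

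The only step that demands any real care is the identification of $G_\Delta$ with the vertex-facet incidence graph of a simple polytope, which crucially uses that $\Delta$ is both simplicial and polytopal (and therefore complete); the counterexample of an incomplete simplicial fan with two rays and a single maximal cone shows that completeness is essential. Beyond that, the argument reduces to a clean double count, leveraging just the two structural facts that every facet of a $d$-polytope has at least $d$ vertices and that every vertex of a simple $d$-polytope is incident to exactly $d$ facets, so I do not anticipate further obstacles.
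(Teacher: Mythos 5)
Your proof is correct and is essentially the same as the paper's: both verify Hall's condition on the ray side via the same double count, using that each ray lies in at least $d$ maximal cones (your ``each facet of $P$ has at least $d$ vertices'') and that each maximal cone has exactly $d$ rays (your ``each vertex of the simple polytope lies on exactly $d$ facets''). The passage to the polar polytope is only a change of language; the paper runs the identical argument directly on the fan.
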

\begin{proof}
By Hall's theorem \cite{Hall}, it suffices to prove that for every subset $W$ of $[n]$, the set of neighbors $N(W)=\{ j\in [r]\colon ij \text{ is an edge of }G_\Delta \text{ for some }i\in W\}$ contains at least as many elements as $W$. Assume that $|W|=k$. Since $\Delta$ is a full-dimensional simplicial fan in $\mathbb{R}^d$, every ray is incident to at least $d$ many facets and thus there are at least $kd$ many edges leaving $W$. If we assume that $|N(W)|<k$, then, by the box principle, there must be at least one facet in $N(W)$ that contains at least $d+1$ many rays in $W$. This is a contradiction since $\Delta$ is simplicial and therefore every facet is incident to exactly $d$ rays. Thus, $|N(W)|\geq k$ and the claim follows.
\end{proof}

The following lemma characterizes when $\Upsilon$ is strictly contained in $\Delta^m$. This will help us understand when $\mathcal{U}$ is non-empty later on.

\begin{lemma}\label{lem:dimensioncomplex}
The polyhedral complex $\Upsilon$ is strictly contained in $\Delta^m$ if and only if $m\geq n$.
\end{lemma}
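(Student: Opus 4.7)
The plan is to prove both directions, using Lemma~\ref{lem:rayfacets} as the key ingredient for the nontrivial direction.

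\textbf{Direction} ($\Leftarrow$, easy direction): Suppose $m < n$. Any matching in a bipartite graph on $[n]\sqcup [m]$ has size at most $\min(n,m) = m < n$, so no graph $G\in \mathcal{G}$ has a matching of size $n$. Hence every closed cone $\overline{U_G}$ of $\Delta^m$ lies in $\Upsilon$, and so $\Upsilon = \Delta^m$, contradicting strict containment.

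\textbf{Direction} ($\Rightarrow$, interesting direction): Suppose $m\geq n$. I need to exhibit at least one $G\in \mathcal{G}$ which does have a matching of size $n$, witnessing that $\Upsilon$ is a proper subcomplex. By Lemma~\ref{lem:rayfacets}, the ray-facet incidence graph $G_\Delta$ of $\Delta$ has a matching of size $n$; write it as pairs $(i_k,j_k)$ for $k=1,\ldots,n$ where $i_1,\ldots,i_n$ is a permutation of $[n]$ and $j_1,\ldots,j_n$ are distinct indices of maximal cones $F_{j_1},\ldots,F_{j_n}$ of $\Delta$, with $\mathbf{v}_{i_k}$ a generator of $F_{j_k}$.

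Now I construct a matrix $U = (\mathbf{u}^{(1)},\ldots,\mathbf{u}^{(m)})^\mathsf{T}$ as follows: for each $k\in [n]$ choose $\mathbf{u}^{(k)} \in \relint F_{j_k}$, and for $k \in \{n+1,\ldots,m\}$ choose $\mathbf{u}^{(k)}$ arbitrarily in the relative interior of any maximal cone of $\Delta$. Since $\mathbf{v}_{i_k}$ is a generator of $F_{j_k}$, the coefficient $[\mathbf{u}^{(k)}]_{i_k}$ is strictly positive. Hence $G_U$ contains the edge $(i_k,k)$ for every $k\in[n]$, and since both coordinates are distinct across $k$, the set $\{(i_k,k) : k\in [n]\}$ is a matching of size $n$ in $G_U$. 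Thus $\overline{U_{G_U}} \in \Delta^m \setminus \Upsilon$, proving $\Upsilon \subsetneq \Delta^m$.

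The only potential obstacle is making sure the construction in the second direction lands in a cell of $\Delta^m$; but this is automatic since each $\mathbf{u}^{(k)}$ is chosen in the relative interior of a cone of $\Delta$, hence lies in a unique carrier, so $U \in U_{G_U} \subseteq \overline{U_{G_U}}$ as required. No further calculation is needed beyond invoking Lemma~\ref{lem:rayfacets}.
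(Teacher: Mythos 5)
Your proof is correct and follows essentially the same route as the paper: the case $m<n$ is immediate since no matching of size $n$ can exist, and for $m\geq n$ both arguments invoke Lemma~\ref{lem:rayfacets} to produce a matching in the ray--facet incidence graph and then realize it inside some $G_U$ (the paper names the cell $F_{j_1}\times\cdots\times F_{j_n}\times C_{n+1}\times\cdots\times C_m$ directly, while you construct a witness matrix $U$ in its relative interior, which is equivalent). The only blemish is cosmetic: you have swapped the labels of the two implications, since ``$m<n$ implies $\Upsilon=\Delta^m$'' is the contrapositive of the forward direction, not of the backward one.
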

\begin{proof}
If $m<n$, then $K_{n,m}$ does not have a matching of size $n$ and thus the same is true for all subgraphs. Therefore, $\overline{U_G}\in \Upsilon$ for all $G\in \mathcal{G}$ and thus $\Upsilon =\Delta^n$.

In order to show that $\Upsilon \neq \Delta ^m$ for all $m\geq n$, we consider the ray-facet incidence graph $G_\Delta$. By Lemma~\ref{lem:rayfacets}, $G_\Delta$ has a matching $M$ of size $n$. Let $F_{j_1},\ldots, F_{j_n}$ be the maximal cones of $\Delta$ such that $i$ is connected to $j_i$ in $M$ for all $1\leq i \leq n$. Then for any choice of cones $C_{n+1}, C_{n+2},\ldots, C_{m}\in \Delta$, the graph $G\subseteq K_{n,m}$ such that
\[
\overline{U_G} = F_{j_1}\times \cdots \times F_{j_n}\times C_{n+1}\times \cdots \times C_m
\]
contains a copy of $M$, in particular, $G$ has a matching of size $n$. That is, $\overline{U_G} \not \in \Upsilon$ and thus $\Upsilon \neq \Delta^m$.
\end{proof}

In order to prove Theorem~\ref{thm:matching} we consider the Edmonds matrix $E_G$ of a bipartite graph $G$ with vertex set $[n]\sqcup [n]$ which is an $n\times n$ matrix defined by 
\[
(E_G)_{ij}=\begin{cases} x_{ij} & \text{ if }ij \text{ is an edge of }G\\
0 & \text{ otherwise.}
\end{cases}
\]
Here, $x_{ij}$ are indeterminants. In particular, $\det E_G$ is a polynomial in these indeterminants. The following lemma holds (see \cite[Edmonds' Theorem 7.3]{Rajeev}).
\begin{lemma}\label{lemma:detGnonzero}
Let $G$ be a bipartite graph with vertex set $[n]\sqcup [n]$. Then $G$ has a perfect matching if and only if $\det E_G\neq 0$ (as polynomial).
\end{lemma}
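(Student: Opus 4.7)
The plan is to expand the determinant of $E_G$ according to the Leibniz formula and observe that, because the entries are distinct indeterminates, the terms cannot cancel, so the polynomial is nonzero precisely when at least one of the terms is nonzero; such a nonzero term will correspond exactly to a perfect matching.

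In more detail, I would start by writing
\[
\det E_G \ = \ \sum _{\sigma \in S_n} \operatorname{sgn}(\sigma) \prod _{i=1}^n (E_G)_{i,\sigma(i)} \, .
\]
For a fixed permutation $\sigma \in S_n$, the summand $\prod _{i=1}^n (E_G)_{i,\sigma(i)}$ equals the monomial $\prod_{i=1}^n x_{i,\sigma(i)}$ whenever $i\sigma(i)$ is an edge of $G$ for every $i\in [n]$, and otherwise it is zero. Observing that the collection of pairs $\{(i,\sigma(i)) : i\in [n]\}$ is a perfect matching of $G$ exactly when each such $i\sigma(i)$ is an edge, I would deduce that the Leibniz expansion specializes to
\[
\det E_G \ = \ \sum _{M \text{ perfect matching}} \operatorname{sgn}(\sigma _M) \prod _{ij\in M} x_{ij} \, ,
\]
where $\sigma _M$ denotes the permutation associated to the matching $M$.

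From here the result follows in both directions. If $G$ has no perfect matching the sum is empty, hence $\det E_G = 0$ as a polynomial. Conversely, if $G$ has at least one perfect matching $M_0$, then the monomial $\prod _{ij\in M_0} x_{ij}$ arises only from the term indexed by $\sigma _{M_0}$: any other perfect matching $M\neq M_0$ differs from $M_0$ in at least two pairs and therefore produces a different monomial in the indeterminates $\{x_{ij}\}$. Hence no cancellation across distinct matchings is possible, and the coefficient of $\prod _{ij\in M_0} x_{ij}$ in $\det E_G$ is $\operatorname{sgn}(\sigma _{M_0})\neq 0$, so $\det E_G \neq 0$ as a polynomial.

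There is no real obstacle here; the only subtle point is to argue cleanly that distinct perfect matchings yield distinct monomials so that no cancellation occurs, which follows immediately from the fact that the $x_{ij}$ are algebraically independent indeterminates indexed by the edges.
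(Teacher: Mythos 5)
Your proof is correct and complete: the Leibniz expansion argument, with the observation that distinct permutations contribute distinct monomials in the algebraically independent indeterminates $x_{ij}$ so that no cancellation can occur, is exactly the standard proof of Edmonds' theorem. The paper itself does not prove this lemma but merely cites it (as Edmonds' Theorem 7.3 in Motwani--Raghavan), and the argument you give is the one found in that reference.
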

Now we have all ingredients to prove Theorem~\ref{thm:matching}.
\begin{proof}[Proof of Theorem~\ref{thm:matching}]
We will prove the following two statements: (i) $\mathcal{U}\subseteq \mathbb{R}^{m\times d}\setminus \Upsilon$ and (ii) the set $(\mathbb{R}^{m\times d}\setminus \Upsilon)\setminus \mathcal{U}$ has Lebesgue measure zero. By Lemma~\ref{lem:dimensioncomplex}, $\mathbb{R}^{m\times d}\setminus \Upsilon$ is non-empty for $m\geq n$, and therefore has non-empty interior as a complement of a polyhedral complex. Thus, from (i) and (ii) it will follow that $\mathcal{U}$ has non-empty interior. The second statement then follows since $\mathbb{R}^{m\times d}\setminus \Upsilon$ consists by definition of all matrices $U\in \mathbb{R}^{m\times d}$ for which $G_U$ has a matching of size $n$.

To prove claim (i), for all subsets $I\subseteq [m]$ let $A_{U,I}$ denote the submatrix of $A_U$ consisting of all rows indexed by elements in $I$. By definition of $A_U$, the matrix $A_{U,I}$ has a non-zero entry at position $(i,j)$ only if $\mathbf{u}^{(i)}$ is contained in a cell with generator $\mathbf{v}_i$. In particular, if $|I|=n$ then $A_{U,I}$ is an evaluation of the Edmonds matrix of the subgraph $G_U[I\sqcup [n]]$ of $G_U$ induced by the subset of vertices $I\sqcup [n]$. By Proposition~\ref{prop:injective}, $U\in \mathcal{U}$ if and only if $\rank A_U =n$. This is the case if there is a non-zero maximal minor $\det A_{U,J}$ of $A_U$ where $J\subseteq [m]$ is a subset of cardinality $n$. That is, if $U\in \mathcal{U}$ then the Edmonds polynomial of some induced subgraph $G_U[J\sqcup [n]]$ has a non-trivial evaluation $\det A_{U,J}$ and thus is non-zero itself. By Lemma \ref{lemma:detGnonzero}, $G_U[J\sqcup [n]]$ has a perfect matching, and thus $G_U$ has a matching of size $n$. That is, $U\in \mathbb{R}^{m\times d}\setminus \Upsilon$.

To see the claim (ii), we recall that the function $U\mapsto A_U$ is a piecewise linear function, and the regions of linearity are precisely the cells of $\Delta ^m$. It follows that for all subsets $I\subseteq [m]$ with $|I|=n$ the function defined by the maximal minors
\[
p_I(\mathbf{u}^{(1)},\ldots, \mathbf{u}^{(m)}) = \det A_{U,I}
\]
agrees with a polynomial in $U=(\mathbf{u}^{(1)},\ldots, \mathbf{u}^{(m)})^\mathsf{T}$ restricted to $U\in \overline{U_G}$ for all $G\in \mathcal{G}$. To see the claim, it suffices to show that for all graphs $G\in \mathcal{G}$ that have a matching of size $n$ the polynomial $p_I(\mathbf{u}^{(1)},\ldots, \mathbf{u}^{(m)})$ restricted to $U_G$ is not the zero polynomial. If $M$ is a matching of size $n$ of a graph $G$ that connects $i$ with $\sigma (i)\in [m]$ for all $i\in [n]$ then for every $U\in U_G$ the matrix $\tilde{U}$ is contained in $\overline{U_G}$, where $\tilde{U}$ is the matrix where the $\sigma(i)$-th row is replaced by $\mathbf{v}_{i}$. We observe that for this new matrix $A_{\tilde{U},I}$, where $I$ is equal to the set $\{\sigma (1),\sigma (2),\ldots, \sigma (n)\}$, is a permutation matrix and therefore $\det A_{\tilde{U},I}$ is not zero. Therefore $p_I(\mathbf{u}^{(1)},\ldots, \mathbf{u}^{(m)})$ is non-zero and the claim follows.
\end{proof}

\begin{remark}
Observe that Theorem~\ref{thm:matching} only holds for generic matrices of directions $U$. An example of a matrix $U$ such that $G_U$ has a matching of size $n$ but for which $\hat{P}^\Delta (U,\mathbf{y})$ is not unique was given in Example~\ref{ex:notunique}.
\end{remark}

\begin{corollary}\label{thm:main2}
Let $U\in \mathbb{R}^{m\times n}$ be a generic matrix such that for all maximal cells $\sigma \in \Delta$ there is an $i\in [n]$ such that $\mathbf{u}^{(i)}$ is in $\relint \sigma$. Then $U\in \mathcal{U}$.
\end{corollary}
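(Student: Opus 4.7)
The plan is to reduce this to Theorem~\ref{thm:matching}. By that theorem, for a generic matrix $U$ the condition $U\in \mathcal{U}$ is equivalent to the existence of a matching of size $n$ in the bipartite graph $G_U$ on $[n]\sqcup [m]$. So it suffices to exhibit such a matching under the stated hypothesis, combined with genericity of $U$.

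The key observation is that, under the hypothesis, $G_U$ contains a copy of the ray-facet incidence graph $G_\Delta$ as a subgraph. Let $F_1,\ldots, F_r$ be the maximal cones of $\Delta$. Since the relative interiors of distinct maximal cones are disjoint, a given $\mathbf{u}^{(i)}$ lies in the relative interior of at most one $F_j$. The assumption guarantees that for each $j\in [r]$ there exists some $i_j\in [m]$ with $\mathbf{u}^{(i_j)}\in \relint F_j$, and by the disjointness just noted we may choose these $i_j$ to be pairwise distinct, giving an injection $\phi\colon [r]\hookrightarrow [m]$, $\phi(j)=i_j$. Now if $kj$ is an edge of $G_\Delta$, i.e., $\mathbf{v}_k$ is a generator of $F_j$, then $\mathbf{u}^{(\phi(j))}\in \relint F_j$ forces $[\mathbf{u}^{(\phi(j))}]_k > 0$, so that $k\,\phi(j)$ is an edge of $G_U$. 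Thus the map $\mathrm{id}_{[n]}\sqcup \phi$ induces an injective graph homomorphism $G_\Delta \hookrightarrow G_U$.

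By Lemma~\ref{lem:rayfacets}, $G_\Delta$ has a matching of size $n$, and its image under the above embedding is a matching of size $n$ in $G_U$. Since $U$ is generic, Theorem~\ref{thm:matching} then yields $U\in \mathcal{U}$, completing the argument.

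I do not expect any genuine obstacle: the entire content is to recognize that the "every maximal cone is hit by some direction" hypothesis is a combinatorial strengthening of the matching condition, so that the structural work has already been done in Lemma~\ref{lem:rayfacets} and Theorem~\ref{thm:matching}. The only subtlety is to make sure that the choices $i_j$ can be taken distinct for distinct $j$, which is immediate from the disjointness of the relative interiors of the maximal cones of $\Delta$.
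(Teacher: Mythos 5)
Your argument is correct and follows exactly the paper's route: observe that the hypothesis forces $G_U$ to contain a copy of the ray--facet incidence graph $G_\Delta$, invoke Lemma~\ref{lem:rayfacets} to obtain a matching of size $n$, and conclude via Theorem~\ref{thm:matching} using genericity. The paper states this in two sentences; you have merely filled in the (correct) details about choosing distinct indices $i_j$ and why $\mathbf{u}^{(i_j)}\in\relint F_j$ gives positive coordinates $[\mathbf{u}^{(i_j)}]_k$ for all generators $\mathbf{v}_k$ of $F_j$.
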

\begin{proof}
We observe that for all matrices $U$ that satisfy the assumption of Corollary~\ref{thm:main2} the graph $G_U$ contains a copy of the ray-facet incidence graph of $\Delta$. Therefore, by Lemma~\ref{lem:rayfacets}, $G_U$ has a matching of size $n$ and the claim follows thus from Theorem~\ref{thm:matching}.
\end{proof}

Note that if we remove the condition on the normal fan and only fix the facet directions the uniqueness result of Theorem~\ref{thm:matching} does no longer hold in general. An example is given in Section~\ref{subsec:arbitraryuniqueness}. However, there exist special cases in which the uniqueness result still can be guaranteed. For example, if $m=n$ and $\mathbf{u}^{(i)} = \mathbf{v}_i$ for $i=1, \ldots, m$, then $\hat{P}^\Delta(U,\mathbf{y})$ is the unique solution of 
$\argmin _{\mathbf{h}\in \clir(V)}\| \mathbf{h}-\mathbf{y}\|$.

\subsection{Algorithm and complexity}\label{subsec:algorithm}

Given a simplicial polytopal fan $\Delta$ and a set $\{(\mathbf{u}^{(i)},y^{(i)})\}_{i=1}^m$ of possibly noisy support function evaluations, the least-squares estimator $\hat{P}^\Delta (U,\mathbf{y})$ is given as the solution of the convex quadratic program $\argmin _{\mathbf{h}\in \mathcal{P}(\Delta)}\|A_U \mathbf{h}-\mathbf{y}\|$ by Theorem~\ref{thm:main}. Algorithm~\ref{AlgoComplexity} summarizes the steps needed in order to compute $\hat{P}^\Delta (U,\mathbf{y})$. Here, we assume that $\Delta$ is given as a list of ray generating vectors $\mathbf{v}_1,\ldots, \mathbf{v}_n$ together with a list of subsets of $[n]$, each of size $d$, indexing the ray generators of the maximal cells in $\Delta$.

\begin{algorithm}[h]
\caption{Computation of the least-squares estimator $\widehat{P}^{\Delta}(U,\mathbf{y})$}
\label{AlgoComplexity} 
\begin{algorithmic}[1]
\REQUIRE $\Delta,  \{(\mathbf{u}^{(i)}, y^{(i)})\}_{i=1}^m$ 
\ENSURE  $\widehat{P}^\Delta(U,\mathbf{y})$
\STATE $A_U \leftarrow 0\in \mathbf{R}^{m\times n}$
\FOR{$i = 1, \ldots, m$}
\FOR{$\sigma \subseteq [n] \text{ maximal cell of } \Delta$}
\STATE \textit{solve} $\mathbf{u}^{(i)} = \sum_{j \in \sigma} \lambda_{ij} \mathbf{v}_j$
\IF{ $\lambda _{ij}\geq 0$ for all $j\in \sigma$}
\FOR{$j\in \sigma$}
\STATE $(A_U)_{ij} \leftarrow  \lambda_{ij}$ 
\ENDFOR
\ENDIF
\STATE \textbf{break}
\ENDFOR
\ENDFOR
\STATE $p \leftarrow$ number of maximal cells of $\Delta$
\STATE $B \leftarrow 0\in \mathbb{R}^{p^2\times n}$
\STATE $i \leftarrow 1$
\FOR{$\sigma _1, \sigma _2\subseteq [n]\text{ maximal cells in } \Delta$}
\IF{ $|\sigma _1\cap \sigma _{2}|=d-1$}
\STATE \textit{solve} $\lambda_{\sigma _1\setminus \sigma _2} \mathbf{v}_{\sigma _1\setminus \sigma _2} + \lambda_{\sigma _2\setminus \sigma _1} \mathbf{v}_{\sigma _2\setminus \sigma _1} + \sum_{j\in \sigma _1\cap \sigma _2} \lambda _j \mathbf{v}_{j} = 0$

\hspace*{55.5mm}$\lambda_{\sigma _1\setminus \sigma _2}+ \lambda_{\sigma _2\setminus \sigma _1} =2$
\FOR{ $j\in \sigma _1\cup \sigma _2$}
\STATE $B_{ij} \leftarrow \lambda _j$
\ENDFOR
\STATE $i\leftarrow i+1$
\ENDIF
\ENDFOR
\STATE \textit{solve} $\mathsf{argmin}_\mathbf{h} \|A_U \mathbf{h} -\mathbf{y} \|$, $B\mathbf{h}\geq \mathbf{0}$
\RETURN $P(\mathbf{h})= \bigcap _{i=1}^n \{\mathbf{x}\in \mathbb{R}^d\colon \langle \mathbf{x},\mathbf{v}_i\rangle \leq h_i\}$
\end{algorithmic}
\end{algorithm}

In line $1$ to $12$ we determine the matrix $A_U$. For each $i\in [m]$ and every maximal cell $\sigma$ we solve in line $4$ a linear system of equations. Solving each system of linear equations is in $O(d^3)$ because each maximal cell $\sigma$ has at most $d$ generators. By the Upper Bound Theorem~\cite{UpperBound}  the number of maximal cells is at most $\binom{n}{\lfloor d/2\rfloor}$,  in total it takes $O\left(m n^{\lfloor d/2\rfloor} d^3\right)$ to compute $A_U$. Proposition~\ref{prop:wallcrossing} provides the linear inequalities that describe the cone $\mathcal{P}(\Delta)$. In line $13$ to $23$ we determine these inequalities that we summarize as rows of a matrix $B$. For this for every pair of maximal cells we check whether they intersect in a $(d-1)$-dimensional cone, and in this case we solve a system of linear equations in $d+1$ variables. This accounts for a running time of $O(n^dd^3)$.
Finally we determine $\mathbf{h} = \mathsf{argmin}_\mathbf{h} \|A_U \mathbf{h} -\mathbf{y} \|$ with $\mathbf{h} \in P(\Delta)$. This is equivalent to the quadratic program $\mathsf{argmin}_\mathbf{x} \mathbf{x}^\mathsf{T} Q \mathbf{x} + q^\mathsf{T}\mathbf{x}$ subject to $G\mathbf{x} \leq \mathbf{b}$ where
\[
Q = \frac{1}{2}A_U^\mathsf{T}A_U \, , \mathbf{q} = -A_U^\mathsf{T}\mathbf{y}\, , G = -B\, \text{ and }\mathbf{b} = \mathbf{0} \, .
\]
Using the ellipsoid method by Nemirovsky and Yudin \cite{Nemirovsky} the quadratic program can be solved in polynomial time in the length of the input data $L$ \cite{Vavasis2001, Kozlov}. Hence, for fixed dimension $d$ the Algorithm~\ref{AlgoComplexity} has a weakly polynomial running time in $L$.

For fixed dimension $d$, the number of generators $n$ and number of data points $m$ might vary a lot which may influence the running time in practice. For example, the class of generalized permutahedra consists of all deformations of the permutahedron $\Pi _d = \conv \{ (\sigma (1),\ldots, \sigma (d)) \colon \sigma \in S_d\}-(n+1)/2\subset \{\mathbf{x}\in \mathbb{R}^d\colon \sum x_i =0\}$. They are equal to $\mathcal{P}(\Delta)$ where $\Delta$ is the braid arrangement $\bigcup _{i\neq j}\{\mathbf{x}\in \mathbb{R}^d\colon x_i -x_j=0\}$ intersected with the hyperplane $\{\mathbf{x}\in \mathbb{R}^d\colon \sum x_i =0\}$. The number of ray generators of $\Delta$ is $2^d-2$. See, e.g.,~\cite{Postnikov}. In particular, the running time in this case is exponential in the dimension.\\

We end this section by briefly comparing our results with the algorithm of Gardner and Kinderlen~\cite{GardnerNew} for reconstructing an unknown shape from support function evaluations $\{(\mathbf{u}^{(i)},y^{(i)})\}_{i=1}^m$. Instead of using the entries of the support vector as variables they use vectors $\mathbf{x}_1,\ldots, \mathbf{x}_m\in \mathbb{R}^d$ and solve the following constrained least-squares problem.
\begin{eqnarray*}
\min _{\mathbf{x}_1,\ldots, \mathbf{x}_m\in \mathbb{R}^d}&& \sum _{i=1}^m (\mathbf{x}_i^\mathsf{T}\mathbf{u}^{(i)}-y^{(i)})^2\\
\text{subject to}&&\mathbf{x}_j^\mathsf{T}\mathbf{u}^{(i)}\leq \mathbf{x}_i^\mathsf{T}\mathbf{u}^{(i)} \text{ for }1\leq i, j\leq m\, , i\neq j
\end{eqnarray*}
If the vectors $\hat{\mathbf{x}}_1,\ldots, \hat{\mathbf{x}}_m$ form a solution then the unknown shape is approximated by the convex hull of $\hat{\mathbf{x}}_1,\ldots, \hat{\mathbf{x}}_m$. The solution is in general not unique and the facet directions can be arbitrary. However, as was argued in~\cite{GardnerNew}, modifying the algorithm to output the smallest polytope with facet directions $V=\{\mathbf{u}^{(1)},\ldots, \mathbf{u}^{(m)}\}$ containing $\hat{\mathbf{x}}_1,\ldots, \hat{\mathbf{x}}_m$, 
\[
P(\hat{\mathbf{x}}_1^\mathsf{T}\mathbf{u}^{(1)},\ldots,\hat{\mathbf{x}}_m^\mathsf{T}\mathbf{u}^{(m)})=\bigcap _{i=1}^m \left\{\mathbf{x}\in \mathbb{R}^d\colon \mathbf{x}^\mathsf{T}\mathbf{u}^{(i)}\leq \hat{\mathbf{x}}_i^\mathsf{T}\mathbf{u}^{(i)}\right\}
\]
always yields the unique least-squares estimate
\[
\hat{P}(V,\mathbf{y})\ = \ \argmin _{\mathbf{h}\in \clir(V)} \frac{1}{m}\sum _{i=1}^m \left(h_{P(\mathbf{h})}(\mathbf{u}^{(i)})-y^{(i)}\right)^2 \, 
\]
within the class of polytopes with facet directions equal to the directions in which the support function values are given. In particular, if the task is to reconstruct a polytope from support function evaluations in the directions of it facets then the algorithm of Gardner and Kinderlen~\cite{GardnerNew} elegantly circumvents making the constraint $\mathbf{h}\in \clir(V)$ explicit. The complexity of this algorithm is polynomial in the number of data points and the dimension. Especially, depending on the number of facet directions this algorithm can be exponential in the dimension and is thus in this regard not superior to Algorithm~\ref{AlgoComplexity}.

In contrast, if the measurements are not taken in the prescribed facet directions then the modification of the algorithm of Gardner and Kinderlen~\cite{GardnerNew} described above does in general not lead to a good approximation: given measurements $\{(\mathbf{u}^{(i)},y^{(i)})\}_{i=1}^m$, the smallest polytope with facet directions $V=\{\mathbf{v}_1,\ldots, \mathbf{v}_n\}$ containing $\hat{\mathbf{x}}_1,\ldots, \hat{\mathbf{x}}_m$, namely $P(h_1,\ldots, h_n)$ where $h_i=\max _{1\leq j\leq m}\hat{\mathbf{x}}_j^\mathsf{T}\mathbf{v}_i$, depends on the particular solution of $\hat{\mathbf{x}}_1,\ldots, \hat{\mathbf{x}}_m$ and can in general be far from the least-squares estimate, even in the noiseless case. See Figure~\ref{Figure:Gardner}.

\begin{figure}[!h]
\centering
\begin{picture}(100,135)
\put(-100,0){\includegraphics[width=0.8\textwidth]{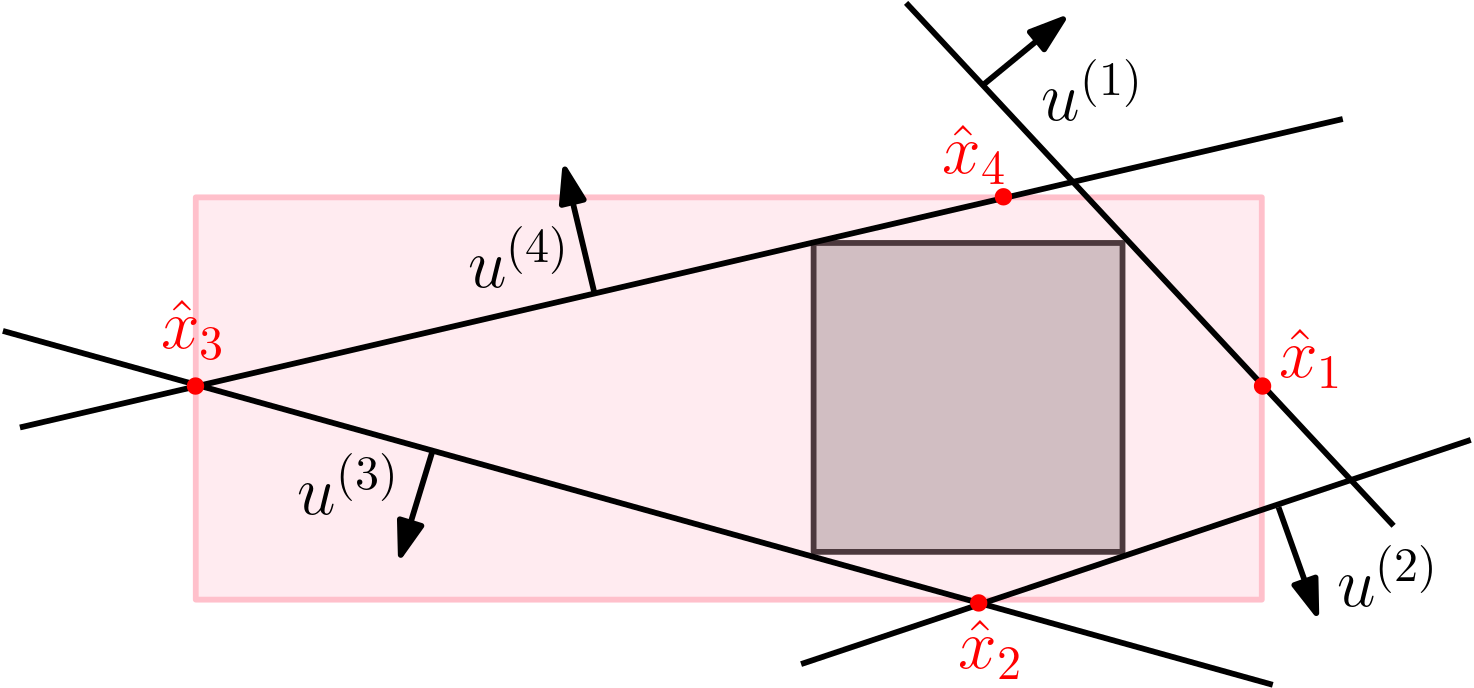}}
\end{picture}
\caption{Input: given $\{(\mathbf{u}^{(i)},y^{(i)})\}_{i=1}^m$ with ´$y^{(i)} = h_P(u^{(i)})$ for unknown input shape $P$ (grey).  Possible output of modified Gardner-Kinderlen algorithm for fixed facet directions $V=\{\pm \mathbf{e}_1,\pm \mathbf{e}_2\}$: smallest rectangle (red) containing $\mathbf{\hat{x}}_i$, $i=1,2,3,4$.}
\label{Figure:Gardner}
\end{figure}

\section{Convergence}\label{sec:convergence}
In this section we consider convergence properties of Algorithm~\ref{AlgoComplexity}. 
In the following let $\mathbf{u}^{(1)},\mathbf{u}^{(2)},\mathbf{u}^{(3)},\ldots$ be a sequence of unit vectors in $\mathbb{S}^{d-1}$ and $P=P(\mathbf{h}_0)$ be a fixed polytope in $\mathcal{P}(\Delta)$. For all $m\geq 1$ let $\hat{P}^\Delta _m(U,\mathbf{y})$ denote the least-squares estimate $\hat{P}^\Delta(U,\mathbf{y})$ of $P$ when only the first $m$ data points $\{(\mathbf{u}^{(i)},y^{(i)}):y^{(i)}=h_P(\mathbf{u}^{(i)})+\varepsilon ^{(i)}\}_{i=1}^m$ are given.

\begin{proposition}
Let $\mathbf{u}^{(1)},\mathbf{u}^{(2)}, \mathbf{u}^{(3)},\ldots \in \mathbb{S}^{d-1}$ be a sequence of directions independently and uniformly chosen. Then

\[
\lim _{m\rightarrow \infty}\mathbb{P}\Big( | \hat{P}^\Delta _m(U,\mathbf{y}) | =1 \text{ for all }\mathbf{y}\in \mathbb{R}^m\Big)=1 \, .
\]

\end{proposition}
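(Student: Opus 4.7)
The plan is to invoke Corollary~\ref{thm:main2}, which guarantees $U \in \mathcal{U}$ whenever $U$ is generic and every maximal cone of $\Delta$ contains at least one $\mathbf{u}^{(i)}$ in its relative interior. It therefore suffices to show that both the \emph{covering} condition and the \emph{genericity} condition hold with probability tending to $1$ as $m \to \infty$, since $|\hat{P}^\Delta_m(U,\mathbf{y})|=1$ for all $\mathbf{y}\in\mathbb{R}^m$ is the very definition of $U\in \mathcal{U}$.

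For the covering condition, note that $\Delta$ is a complete $d$-dimensional simplicial fan, as it is the normal fan of the polytope $P \in \mathcal{P}(\Delta)$, so each maximal cone $\sigma$ is full-dimensional and hence $p_\sigma := \mathbb{P}(\mathbf{u}^{(1)} \in \relint \sigma) > 0$. By independence, $\mathbb{P}(\mathbf{u}^{(i)} \notin \relint \sigma \text{ for all } i \leq m) = (1 - p_\sigma)^m$, and a union bound over the finitely many maximal cones gives
\[
\mathbb{P}\bigl(\text{some maximal cone is missed by } \mathbf{u}^{(1)},\ldots,\mathbf{u}^{(m)}\bigr) \ \leq \ \sum_{\sigma \in \Delta^{(d)}} (1 - p_\sigma)^m \ \longrightarrow \ 0 \, .
\]

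For the genericity condition, recall from the proof of Theorem~\ref{thm:matching} that the exceptional set within each cell of $\Delta^m$ on which the bipartite graph admits a matching of size $n$ is cut out by polynomial minor equations $p_I(U) = \det A_{U,I}$ that are not identically zero as polynomials in the entries of $U$. The witness matrix $\tilde U$ used there may be taken with unit rows (replace each $\mathbf{v}_i$ by $\mathbf{v}_i/\|\mathbf{v}_i\|$; this only rescales the non-zero minor by a positive factor), so the same polynomials do not vanish identically on the submanifold $(\mathbb{S}^{d-1})^m$, and hence their common zero locus meets $(\mathbb{S}^{d-1})^m$ in a set of $m(d-1)$-dimensional surface measure zero. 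Since the law of $U=(\mathbf{u}^{(1)},\ldots,\mathbf{u}^{(m)})^\mathsf{T}$ is absolutely continuous with respect to this surface measure, the genericity condition holds almost surely, and combining with the covering condition yields $\mathbb{P}(U \in \mathcal{U}) \to 1$. The only conceptually delicate step is translating the \emph{Lebesgue-generic} condition on $\mathbb{R}^{m\times d}$ into one that is null under the surface measure on $(\mathbb{S}^{d-1})^m$; this is resolved by observing that the defining minor polynomials remain non-zero on the sphere product, after which the remainder is a direct Borel--Cantelli/union-bound calculation combined with Corollary~\ref{thm:main2}.
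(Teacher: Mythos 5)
Your proof is correct and follows the same route as the paper: reduce to Corollary~\ref{thm:main2} and bound the probability that some maximal cone of $\Delta$ is missed by all of $\mathbf{u}^{(1)},\ldots,\mathbf{u}^{(m)}$ via a union bound, each term decaying geometrically in $m$. The one place you go beyond the paper is in justifying why the Lebesgue-genericity of Theorem~\ref{thm:matching} on $\mathbb{R}^{m\times d}$ transfers to almost-sure genericity for the uniform measure on the measure-zero submanifold $(\mathbb{S}^{d-1})^m$ --- the paper simply writes ``almost surely'' --- and your observation that the witness matrix can be taken with unit rows, so the relevant minor polynomials do not vanish identically on the sphere product, is a welcome and in fact necessary addition.
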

\begin{proof}
Let $F_1,\ldots, F_r$ denote the maximal cells of $\Delta$ and let $\mathbf{y}\in \mathbb{R}^m$ be such that $| \hat{P}^\Delta _m(U,\mathbf{y}) | >1$. Then there exists an $i\in [r]$ such that for all $k\in [m]$, $\mathbf{u}^{(k)}\not \in \relint F_i$ almost surely by Corollary~\ref{thm:main2}. Let $E_i (m)$ be the event that $\mathbf{u}^{(k)}\not \in \relint F_i$ for any $k\in [m]$. Let $q_i$ denote the probability that a vector $\mathbf{u}\in \mathbb{S}^{d-1}$ is not contained in $F_i$. Then $q_i$ is equal to $1$ minus the spherical angle of the cone $F_i$ at the origin. 

In particular, since $q_i<1$ for all $i$ and the directions $\mathbf{u}^{(1)},\mathbf{u}^{(2)},\ldots$ are independently chosen, we obtain that
\begin{eqnarray*}
\mathbb{P}\Big( | \hat{P}^\Delta _m(U,\mathbf{y}) | >1 \text{ for some }\mathbf{y}\in \mathbb{R}^m\Big)&\leq& \mathbb{P}(E_1 (m))+\cdots + \mathbb{P}(E_r (m))\\&\leq& q_1^m+\cdots + q_r^m\\
&\to& 0
\end{eqnarray*}
for $m\to \infty$ as desired.
\end{proof}

For every ray generator $\mathbf{v}_j$ of $\Delta$ and every $t>0$ let $C_t (j)$ denote the closed neighborhood of $\mathbf{v}_j/\|\mathbf{v}_j\|$ defined by
\[
C_t (j)=\{\mathbf{x}\in \mathbb{R}^d\colon \|[\mathbf{x}]^\mathsf{T}W-\mathbf{e}_j\|_\infty \leq t\} \, 
\]
where $W$ is the diagonal matrix
\[
W=\begin{pmatrix}
\|\mathbf{v}_1\|&0&\cdots &0\\
0&\|\mathbf{v}_2\|&\cdots &0\\
\vdots&\vdots&\ddots&\vdots\\
0&\cdots & 0& \|\mathbf{v}_n\|
\end{pmatrix} \, .
\]

The main result in this section is the following.

\begin{theorem}\label{thm:convergence}
Let $\mathbf{u}^{(1)},\mathbf{u}^{(2)}, \mathbf{u}^{(3)},\ldots \in \mathbb{S}^{d-1}$ be a sequence of directions for which there exist $0\leq t<1/2$, $\delta >0$ and $N\in \mathbb{N}$ such that
\[
|\{\mathbf{u}^{(1)},\ldots, \mathbf{u}^{(m)}\}\cap C_t(i)|\geq m(2.5 nt+\delta)
\]
for all $m\geq N$ and for all $1\leq i\leq n$. Let $P=P(\mathbf{h}_0) \in \mathcal{P}(\Delta)$ be the unknown input shape and for all $i\geq 0$ let $y^{(i)}=h_P(\mathbf{u}^{(i)})+\varepsilon ^{(i)}$ where the errors $\varepsilon ^{(i)}\sim \mathcal{N}(0,\sigma _i^2)$ are independent and normally distributed with uniformly bounded variance $\sigma _i^2\leq \gamma <\infty$. Then for all $m \geq N$ the least-squares estimate $\hat{P}^\Delta _m(U,\mathbf{y})$ is unique and
\[
\mathbb{P}\big( \lim _{m\rightarrow \infty} \hat{P}^\Delta _m(U,\mathbf{y}) = P\big)=1 \, .
\]
Moreover, for all $m\geq N$ and fixed failure probability $\eta$,
\[
\mathbb{P}\left(d_H(\hat{P}^\Delta _m(U,\mathbf{y}), P)\geq \frac{1}{\sqrt{m}}\frac{n(c^\Delta)^2}{\kappa}\sqrt {2d\gamma \lambda \log \left( \frac{2n}{\eta}\right)}\right)\leq \eta \, ,
\]
where
\[
\kappa \ = \ \left(\frac{\delta }{ \max_i \|\mathbf{v}_i\|^2}\right)^{\frac{3}{2}}
\]
and
\[
\lambda \ = \  (c^\Delta)^2-\left((c^\Delta)^2-\frac{t^2}{\min _i\|\mathbf{v}_i\|^2}\right)(n-1)(2.5nt+\delta)  \, .
\]

\end{theorem}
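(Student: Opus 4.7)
The plan is to reduce the Hausdorff convergence to convergence of the support vector, apply the basic inequality of constrained least squares to obtain the bound $\|\hat{\mathbf{h}}_m - \mathbf{h}_0\| \leq 2\|A_U^\mathsf{T}\boldsymbol{\varepsilon}\|/\lambda_{\min}(A_U^\mathsf{T}A_U)$, and then control numerator and denominator separately using the concentration hypothesis and Gaussian tail bounds.

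For uniqueness I would argue directly. Since $t<1/2$, any $\mathbf{u}^{(k)}$ with $k\in B_i:=\{k:\mathbf{u}^{(k)}\in C_t(i)\}$ has $[\mathbf{u}^{(k)}]_i\geq(1-t)/\|\mathbf{v}_i\|>0$ while $[\mathbf{u}^{(k)}]_j\leq t/\|\mathbf{v}_j\|<1/(2\|\mathbf{v}_j\|)$ for $j\neq i$, so the sets $B_1,\dots,B_n$ are pairwise disjoint and each is non-empty. Picking one $k_i\in B_i$ produces an $n\times n$ submatrix of $A_U$ whose rescaling by $\mathrm{diag}(\|\mathbf{v}_i\|)$ is strictly diagonally dominant, hence invertible; thus $\rank A_U=n$ and Theorem~\ref{thm:main} yields a unique $\hat{\mathbf{h}}_m$. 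Combining this with Lemma~\ref{lem:Hausdorffdist} and the optimality chain
\[
\|A_U(\hat{\mathbf{h}}_m-\mathbf{h}_0)\|^2\leq 2\langle\hat{\mathbf{h}}_m-\mathbf{h}_0,\,A_U^\mathsf{T}\boldsymbol{\varepsilon}\rangle\leq 2\|\hat{\mathbf{h}}_m-\mathbf{h}_0\|\cdot\|A_U^\mathsf{T}\boldsymbol{\varepsilon}\|
\]
reduces the problem to the two claimed estimates on $\lambda_{\min}(A_U^\mathsf{T}A_U)$ and $\|A_U^\mathsf{T}\boldsymbol{\varepsilon}\|$.

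For the noise term, each coordinate $(A_U^\mathsf{T}\boldsymbol{\varepsilon})_i=\sum_k\varepsilon^{(k)}[\mathbf{u}^{(k)}]_i$ is centered Gaussian of variance at most $\gamma\sum_k([\mathbf{u}^{(k)}]_i)^2$. Splitting the sum over $k\in B_i$, over $k\in B_j$ with $j\neq i$ (where $|[\mathbf{u}^{(k)}]_i|\leq t/\|\mathbf{v}_i\|$), and over $k\notin\bigcup_j B_j$, and using $|B_j|\geq m(2.5nt+\delta)$, gives variance at most $\gamma m\lambda$ with the constant $\lambda$ of the theorem. A scalar Gaussian tail bound and a union bound over the $n$ coordinates then deliver
\[
\|A_U^\mathsf{T}\boldsymbol{\varepsilon}\|\leq\sqrt{n}\,\sqrt{2\gamma m\lambda\log(2n/\eta)}
\]
with probability at least $1-\eta$. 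For the denominator I write $A_U^\mathsf{T}A_U=\sum_{i=1}^n\sum_{k\in B_i}[\mathbf{u}^{(k)}][\mathbf{u}^{(k)}]^\mathsf{T}+R$ with $R\succeq 0$, observe that the $B_i$-block concentrates around $|B_i|/\|\mathbf{v}_i\|^2\,\mathbf{e}_i\mathbf{e}_i^\mathsf{T}$ up to a perturbation whose entries are controlled by $t$ and $t^2$ via the two defining inequalities of $C_t(i)$, and apply a Weyl/Gershgorin-type estimate to conclude $\lambda_{\min}(A_U^\mathsf{T}A_U)\geq c\,m\kappa$ for an absolute constant $c$. Combining this with the noise bound and Lemma~\ref{lem:Hausdorffdist} produces the stated quantitative Hausdorff estimate (up to absorbing numerical constants into the statement).

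Almost sure convergence then follows by applying the rate bound with a summable sequence of failure probabilities, e.g.\ $\eta_m=m^{-2}$, so that $\mathbb{P}\bigl(d_H(\hat P^\Delta_m(U,\mathbf{y}),P)>C\sqrt{\log m/m}\bigr)\leq m^{-2}$; the Borel--Cantelli lemma then yields $d_H(\hat P^\Delta_m(U,\mathbf{y}),P)\to 0$ almost surely. The main obstacle will be the clean lower bound on $\lambda_{\min}(A_U^\mathsf{T}A_U)$ of the precise form $m\kappa$: the naive diagonal estimate is jeopardized by the off-diagonal perturbations coming from directions that are near, but not exactly on, the rays $\mathbf{v}_i$, and it is the careful bookkeeping of these perturbations against the mass $\delta$ that fixes both the coefficient $2.5$ of $nt$ in the hypothesis and the exponent $3/2$ in the definition of $\kappa$.
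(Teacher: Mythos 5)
Your proposal is correct in substance, but the central step is organized differently from the paper, and it is worth being precise about where the stated constants come from. The paper does not use the basic inequality $\|A_U(\hat{\mathbf{h}}-\mathbf{h}_0)\|^2\leq 2\langle\hat{\mathbf{h}}-\mathbf{h}_0,A_U^{\mathsf{T}}\bm{\varepsilon}\rangle$. Instead it introduces the orthogonal projection $\bar{\mathbf{y}}$ of $\mathbf{y}$ onto the subspace $A_U\mathbb{R}^n$, shows by an obtuse-angle (law of cosines) argument that the projection $\hat{\mathbf{y}}$ onto the cone $A_U\mathcal{P}(\Delta)$ satisfies $\|\hat{\mathbf{y}}-\mathbf{y}_0\|\leq\|\bar{\mathbf{y}}-\mathbf{y}_0\|$ (Lemma~\ref{lemma:BoundsOnYY0}), and then chains eigenvalue inequalities to get $\|A_U^{\mathsf{T}}\bm{\varepsilon}\|\geq\lambda_1(A_U^{\mathsf{T}}A_U)^{3/2}\,\lambda_n(A_U^{\mathsf{T}}A_U)^{-1/2}\,\|\hat{\mathbf{h}}-\mathbf{h}_0\|$. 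It is this chain, combined with $\lambda_n(A_U^{\mathsf{T}}A_U)\leq mn(c^\Delta)^2$ and $\lambda_1(A_U^{\mathsf{T}}A_U)\geq m\delta/\max_i\|\mathbf{v}_i\|^2$ (Lemmas~\ref{lemma:MaxEigvalAU} and~\ref{lemma:MinEigvalAU}), that produces the exponent $3/2$ in $\kappa$ and the factor $n(c^\Delta)^2$; you have misattributed the $3/2$ to the eigenvalue perturbation bookkeeping, which in fact only accounts for the coefficient $2.5$ and yields a bound with exponent one. Your basic-inequality route gives the cleaner estimate $\|\hat{\mathbf{h}}-\mathbf{h}_0\|\leq 2\|A_U^{\mathsf{T}}\bm{\varepsilon}\|/\lambda_1(A_U^{\mathsf{T}}A_U)$ and hence a sharper threshold than the one in the statement; since the hypothesis forces $n\delta\leq 1$ and one always has $c^\Delta\max_i\|\mathbf{v}_i\|\geq 1$, your threshold is indeed below the stated one and the stated tail bound follows a fortiori --- but you should carry out this comparison explicitly rather than "absorbing numerical constants" into a statement whose constants are explicit. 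The variance bound $\sigma^2\leq m\gamma\lambda$ via the split over the neighborhoods $C_t(j)$, the union bound over the $n$ coordinates, and the Borel--Cantelli deduction of almost sure convergence (the paper instead applies the law of large numbers to $\tfrac{1}{m}A_U^{\mathsf{T}}\bm{\varepsilon}$ directly) all match or validly replace the paper's steps.

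Two small repairs. First, your diagonal-dominance argument for uniqueness is not justified by $t<1/2$ alone: a row of the rescaled submatrix has diagonal entry at least $1-t$ but up to $d-1$ nonzero off-diagonal entries each bounded by $t$, so dominance requires $t<1/d$. This is rescued only because the hypothesis implicitly forces $t$ to be that small (the sets $C_t(i)$ are pairwise disjoint for $t<1/2$ and each carries a fraction at least $2.5nt+\delta$ of the first $m$ directions, whence $2.5n^2t\leq 1$ and $n\geq d+1$); in any case the argument is redundant, since once $\lambda_1(A_U^{\mathsf{T}}A_U)>0$ is established one has $\rank A_U=n$ and uniqueness follows from Proposition~\ref{prop:uniquenessy}. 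Second, the eigenvalue lower bound you should aim for is $\lambda_1(A_U^{\mathsf{T}}A_U)\geq m\delta/\max_i\|\mathbf{v}_i\|^2$ rather than $c\,m\kappa$; the latter, weaker form is only what the paper's longer chain of inequalities would require.
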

Roughly speaking, Theorem~\ref{thm:convergence} states that a polytope $P\in \mathcal{P}(\Delta)$ can be approximated almost surely under the assumption that the sequence of directions is sufficiently concentrated around every single ray $\mathbf{v}_i$ of $\Delta$. Note that in general we do not expect this assumption to be satisfied for uniform random unit vectors since the surface area measure of every neighborhood $C_t(i)$ can, up to constants, be bounded above and below by that of a spherical cap of radius $t$ which grows as $t^{d-1}$, that is, sublinear for small $t$. Further, we observe that the convergence rate improves with decreasing $t$. If the sequence of directions consists of facet defining vectors the assumptions on the sequence are satisfied with $t=0$ and Theorem~\ref{thm:convergence} reduces to the following special case.

\begin{corollary}\label{cor:convergence}
Let $\mathbf{u}^{(1)},\mathbf{u}^{(2)}, \mathbf{u}^{(3)},\ldots \in \mathbb{S}^{d-1}$, where $\mathbf{u}^{(i)}\in \left\{ \mathbf{v}_1/\| \mathbf{v}_1\|,\ldots, \mathbf{v}_n/\|\mathbf{v}_n\|\right\}$ for all $i\geq 1$, be a sequence of facet defining directions such that there exists a constant $\delta >0$ and an $N\in \mathbb{N}$ for which
\[
|\{\ell\in [m] \colon \mathbf{u}^{(\ell)}=\mathbf{v}_i/\| \mathbf{v}_i\|\}|\geq m\delta
\]
for all $m\geq N$ and for all $1\leq i\leq n$. Then, under the same assumptions as in Theorem~\ref{thm:convergence},
\[
\mathbb{P}_m^\Delta\big( \lim _{m\rightarrow \infty} \hat{P}(U,\mathbf{y}) = P\big)=1 \, .
\]
\end{corollary}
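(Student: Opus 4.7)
The plan is to obtain Corollary~\ref{cor:convergence} as a direct specialization of Theorem~\ref{thm:convergence} with $t=0$, so the only real work is to recognize that the corollary's hypothesis is literally the $t=0$ instance of the hypothesis of Theorem~\ref{thm:convergence}.

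First I would unpack the neighborhood $C_t(i)$ in the degenerate case $t=0$. By definition, $C_0(i)=\{\mathbf{x}\in\mathbb{R}^d\colon [\mathbf{x}]^{\mathsf T}W=\mathbf{e}_i\}$, which forces $[\mathbf{x}]_j\|\mathbf{v}_j\|=\delta_{ij}$, i.e.\ $[\mathbf{x}]_i=1/\|\mathbf{v}_i\|$ and $[\mathbf{x}]_j=0$ for $j\ne i$. Since $[\,\cdot\,]$ writes a vector in the basis of ray generators of its carrier cone, this characterizes $\mathbf{x}$ uniquely as $\mathbf{x}=\mathbf{v}_i/\|\mathbf{v}_i\|$. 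Hence $C_0(i)=\{\mathbf{v}_i/\|\mathbf{v}_i\|\}$.

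Next, under the corollary's hypothesis, every sample $\mathbf{u}^{(\ell)}$ lies in one of the singletons $C_0(1),\ldots,C_0(n)$, and the count of those lying in $C_0(i)$ is at least $m\delta$ for all $m\geq N$. With $t=0$ the bound $m(2.5nt+\delta)$ in Theorem~\ref{thm:convergence} collapses to $m\delta$, so
\[
|\{\mathbf{u}^{(1)},\ldots,\mathbf{u}^{(m)}\}\cap C_0(i)|\geq m(2.5n\cdot 0+\delta)
\]
is exactly satisfied for each $i$. The noise hypotheses (independence, normal distribution, uniformly bounded variance) are imported verbatim from the statement of Theorem~\ref{thm:convergence}.

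Therefore Theorem~\ref{thm:convergence} applies with $t=0$, yielding both the uniqueness of $\hat{P}^\Delta_m(U,\mathbf{y})$ for $m\geq N$ and the almost-sure convergence $\hat{P}^\Delta_m(U,\mathbf{y})\to P$ in Hausdorff distance. The almost-sure convergence is the content of the corollary. I anticipate no obstacle beyond the bookkeeping in the first paragraph; the only subtlety worth stating explicitly is that $C_0(i)$ is genuinely the singleton $\{\mathbf{v}_i/\|\mathbf{v}_i\|\}$, which is what makes the corollary's combinatorial counting hypothesis line up with the geometric concentration hypothesis of the theorem. One could additionally remark that the quantitative rate of Theorem~\ref{thm:convergence} also survives, simplifying to $\lambda=(c^\Delta)^2\bigl(1-(n-1)\delta\bigr)$ and $\kappa=(\delta/\max_i\|\mathbf{v}_i\|^2)^{3/2}$, but this is not needed for the almost-sure statement the corollary asserts.
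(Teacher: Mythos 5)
Your proposal is correct and matches the paper's own argument: the authors likewise obtain the corollary by observing that $C_0(i)=\{\mathbf{v}_i/\|\mathbf{v}_i\|\}$, so the counting hypothesis is exactly the $t=0$ instance of the concentration hypothesis, and Theorem~\ref{thm:convergence} applies directly. Your explicit verification that $[\mathbf{x}]^{\mathsf T}W=\mathbf{e}_i$ pins down $\mathbf{x}=\mathbf{v}_i/\|\mathbf{v}_i\|$ is just a more careful spelling-out of the one-line reduction the paper gives.
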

The remainder of this section is dedicated to proving Theorem~\ref{thm:convergence}. 

For every symmetric matrix $A\in \mathbb{R}^{n\times n}$ let $\lambda _1(A) \leq \cdots \leq \lambda _n (A)$ denote the (real) eigenvalues in increasing order. Recall that for any matrix $M\in \mathbb{R}^{m\times n}$ the symmetric matrix $M^\mathsf{T}M\in \mathbb{R}^{n\times n}$ is positive semidefinite. In particular, all eigenvalues of $M^\mathsf{T}M$ are nonnegative. The following two norm inequalities are folklore. 

\begin{lemma}\label{lem:inequalitymatrix}
Let $M\in \mathbb{R}^{m\times n}$ be a matrix. Then
\[
\sqrt{\lambda _1 (M^\mathsf{T}M)}\|\mathbf{x}\|\leq \|M\mathbf{x}\|\leq \sqrt{\lambda _n (M^\mathsf{T}M)}\|\mathbf{x}\|
\]
for all $\mathbf{x}\in \mathbb{R}^n$.
\end{lemma}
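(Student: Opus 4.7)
The plan is to reduce the norm inequality to the standard Rayleigh quotient bound for the symmetric positive semidefinite matrix $M^\mathsf{T}M$. First I would rewrite $\|M\mathbf{x}\|^2 = \langle M\mathbf{x}, M\mathbf{x}\rangle = \mathbf{x}^\mathsf{T} M^\mathsf{T} M \mathbf{x}$ so that the quantity to bound becomes a quadratic form in a symmetric positive semidefinite matrix.

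Next I would invoke the spectral theorem: since $M^\mathsf{T}M$ is real symmetric, there exists an orthogonal matrix $Q$ such that $M^\mathsf{T}M = Q^\mathsf{T} D Q$, where $D$ is the diagonal matrix with entries $\lambda_1(M^\mathsf{T}M)\leq \cdots \leq \lambda_n(M^\mathsf{T}M)$, all nonnegative by positive semidefiniteness. Setting $\mathbf{y}=Q\mathbf{x}$, orthogonality gives $\|\mathbf{y}\|=\|\mathbf{x}\|$, and the quadratic form becomes
\[
\mathbf{x}^\mathsf{T}M^\mathsf{T}M\mathbf{x} \ = \ \mathbf{y}^\mathsf{T} D \mathbf{y} \ = \ \sum_{i=1}^n \lambda_i(M^\mathsf{T}M)\, y_i^2 .
\]

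From here the result is just a pinch inequality: bounding each $\lambda_i$ by $\lambda_1$ from below and $\lambda_n$ from above yields
\[
\lambda_1(M^\mathsf{T}M)\,\|\mathbf{y}\|^2 \ \leq \ \sum_{i=1}^n \lambda_i(M^\mathsf{T}M)\, y_i^2 \ \leq \ \lambda_n(M^\mathsf{T}M)\,\|\mathbf{y}\|^2 .
\]
Substituting $\|\mathbf{y}\|=\|\mathbf{x}\|$, recognizing the middle expression as $\|M\mathbf{x}\|^2$, and taking square roots (permissible since all quantities are nonnegative) gives the claimed inequality. There is no real obstacle here — the whole argument is a direct application of the spectral theorem plus monotonicity of the square root — which is why the paper labels it folklore; the only minor care is to note that $\lambda_1(M^\mathsf{T}M)\geq 0$ so the lower square root is well defined.
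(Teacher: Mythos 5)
Your proof is correct and complete: the reduction of $\|M\mathbf{x}\|^2$ to the quadratic form $\mathbf{x}^\mathsf{T}M^\mathsf{T}M\mathbf{x}$, diagonalization by an orthogonal change of variables, and the pinch between the extreme eigenvalues is exactly the standard Rayleigh-quotient argument. The paper states this lemma as folklore and gives no proof, so there is nothing to compare against; your argument supplies precisely what is implicitly intended.
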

\begin{lemma}\label{lem:inequalityposdef}
Let $B\in \mathbb{R}^{n\times n}$ be a positive semidefinite matrix. Then
\[
\lambda _1 (B)\|\mathbf{x}\|\leq \|B\mathbf{x}\|\leq \lambda _n (B)\|\mathbf{x}\|
\]
for all $\mathbf{x}\in \mathbb{R}^n$.
\end{lemma}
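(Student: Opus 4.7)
The statement is standard linear algebra, and the plan is to derive it directly from the preceding Lemma~\ref{lem:inequalitymatrix} applied to $M=B$. Since $B$ is symmetric, $M^\mathsf{T}M=B^\mathsf{T}B=B^{2}$, so the conclusion of Lemma~\ref{lem:inequalitymatrix} reads
\[
\sqrt{\lambda_1(B^{2})}\,\|\mathbf{x}\| \;\le\; \|B\mathbf{x}\| \;\le\; \sqrt{\lambda_n(B^{2})}\,\|\mathbf{x}\|
\]
for all $\mathbf{x}\in\mathbb{R}^n$. The task therefore reduces to identifying $\sqrt{\lambda_1(B^{2})}=\lambda_1(B)$ and $\sqrt{\lambda_n(B^{2})}=\lambda_n(B)$, which is where positive semidefiniteness enters.

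For that step I would invoke the spectral theorem for symmetric matrices to write $B=Q\Lambda Q^\mathsf{T}$ with $Q$ orthogonal and $\Lambda=\mathrm{diag}(\lambda_1(B),\ldots,\lambda_n(B))$; then $B^{2}=Q\Lambda^{2}Q^\mathsf{T}$, so the eigenvalues of $B^{2}$ are exactly $\lambda_i(B)^{2}$. Because $B$ is positive semidefinite, all $\lambda_i(B)\ge 0$, so squaring preserves the order, giving $\lambda_1(B^{2})=\lambda_1(B)^{2}$ and $\lambda_n(B^{2})=\lambda_n(B)^{2}$. Taking square roots yields the claim.

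As a self-contained alternative that does not rely on Lemma~\ref{lem:inequalitymatrix}, I could argue directly from the spectral decomposition: setting $\mathbf{y}=Q^\mathsf{T}\mathbf{x}$ gives $\|\mathbf{y}\|=\|\mathbf{x}\|$ by orthogonality, and
\[
\|B\mathbf{x}\|^{2}=\|\Lambda \mathbf{y}\|^{2}=\sum_{i=1}^{n}\lambda_i(B)^{2}\,y_i^{2},
\]
which, using $0\le\lambda_1(B)\le\lambda_i(B)\le\lambda_n(B)$, is sandwiched between $\lambda_1(B)^{2}\|\mathbf{y}\|^{2}$ and $\lambda_n(B)^{2}\|\mathbf{y}\|^{2}$. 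Taking square roots finishes the proof. There is no real obstacle here; the only subtlety is to note that $B$ being positive semidefinite is what lets us pass freely between the eigenvalues of $B^{2}$ and those of $B$ under square roots.
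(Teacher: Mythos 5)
Your proof is correct. The paper gives no proof of this lemma at all (it is dismissed as ``folklore'' alongside Lemma~\ref{lem:inequalitymatrix}), and your argument via the spectral decomposition $B=Q\Lambda Q^\mathsf{T}$ --- including the observation that positive semidefiniteness is exactly what makes squaring order-preserving on the eigenvalues, so that $\sqrt{\lambda_1(B^2)}=\lambda_1(B)$ and $\sqrt{\lambda_n(B^2)}=\lambda_n(B)$ --- is the standard and complete justification.
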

We will use the following notations. Let $P=P(\mathbf{h}_0)$ be the unknown input shape. We denote by $\mathbf{y}_0=A_U\mathbf{h}_0\in A_U\mathcal{P}(\Delta)$ the unobservable vector of support function evaluations of $P$. In particular, $\mathbf{y}-\mathbf{y}_0=\bm{\varepsilon}$, where $\bm{\varepsilon}=(\varepsilon^{(1)},\ldots, \varepsilon^{(m)})^\mathsf{T}$ is the vector of errors. Recall that for a given $\mathbf{y}$ the unique point in the cone $A_U \mathcal{P}(\Delta)$ with minimal distance to $\mathbf{y}$ is denoted by $\hat{\mathbf{y}}$. Further, let $\bar{\mathbf{y}}$ be the orthogonal projection of $\mathbf{y}$ onto the subspace $A_U\mathbb{R}^n$.

\begin{lemma}\label{lemma:BoundsOnYY0}
With the notations above we obtain
\begin{eqnarray*}
\|\hat{\mathbf{y}}-\mathbf{y}_0\| & \leq & \|\bar{\mathbf{y}}-\mathbf{y}_0\| \, , \\
\sqrt{\lambda _1 (A_U^\mathsf{T} A_U)}\|\hat{\mathbf{h}}-\mathbf{h}_0\| & \leq & \sqrt{\lambda _n (A_U^\mathsf{T}A_U)}\|\bar{\mathbf{h}}-\mathbf{h}_0\| \, .
\end{eqnarray*}
where $\bar{\mathbf{h}}$ and $\hat{\mathbf{h}}$ denote vectors in $\mathbb{R}^n$ such that $A_U\bar{\mathbf{h}}=\bar{\mathbf{y}}$ and $A_U\hat{\mathbf{h}}=\hat{\mathbf{y}}$.
\end{lemma}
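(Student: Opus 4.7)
My plan is to handle the two inequalities separately, with the first being the geometric heart of the proof and the second an immediate consequence of Lemma~\ref{lem:inequalitymatrix}.

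For the first inequality $\|\hat{\mathbf{y}}-\mathbf{y}_0\| \leq \|\bar{\mathbf{y}}-\mathbf{y}_0\|$, the key observation is a two-step projection identity: I claim that $\hat{\mathbf{y}}$ is not only the projection of $\mathbf{y}$ onto the closed convex cone $A_U\mathcal{P}(\Delta)$, but is also the projection of $\bar{\mathbf{y}}$ onto this same cone. To see this, note that $A_U\mathcal{P}(\Delta) \subseteq A_U\mathbb{R}^n$, and for any $\mathbf{z} \in A_U\mathcal{P}(\Delta)$, the vector $\mathbf{y}-\bar{\mathbf{y}}$ is orthogonal to the subspace $A_U\mathbb{R}^n$ (by definition of $\bar{\mathbf{y}}$ as the orthogonal projection), and hence orthogonal to $\bar{\mathbf{y}}-\mathbf{z}$. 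By the Pythagorean theorem,
\[
\|\mathbf{y}-\mathbf{z}\|^2 = \|\mathbf{y}-\bar{\mathbf{y}}\|^2 + \|\bar{\mathbf{y}}-\mathbf{z}\|^2,
\]
so minimizing $\|\mathbf{y}-\mathbf{z}\|$ over $\mathbf{z}\in A_U\mathcal{P}(\Delta)$ is equivalent to minimizing $\|\bar{\mathbf{y}}-\mathbf{z}\|$, giving the same unique minimizer $\hat{\mathbf{y}}$.

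With this identity in hand, I apply the classical fact that the metric projection onto a closed convex set is $1$-Lipschitz. Since $\mathbf{y}_0 = A_U\mathbf{h}_0 \in A_U\mathcal{P}(\Delta)$, it is its own projection onto $A_U\mathcal{P}(\Delta)$, and therefore
\[
\|\hat{\mathbf{y}}-\mathbf{y}_0\| = \bigl\|P_{A_U\mathcal{P}(\Delta)}(\bar{\mathbf{y}})-P_{A_U\mathcal{P}(\Delta)}(\mathbf{y}_0)\bigr\| \leq \|\bar{\mathbf{y}}-\mathbf{y}_0\|,
\]
which is exactly the first inequality.

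For the second inequality, I invoke Lemma~\ref{lem:inequalitymatrix} applied to $M=A_U$. Setting $\mathbf{x}=\hat{\mathbf{h}}-\mathbf{h}_0$ and using $A_U\hat{\mathbf{h}} = \hat{\mathbf{y}}$, $A_U\mathbf{h}_0 = \mathbf{y}_0$, I get
\[
\sqrt{\lambda_1(A_U^\mathsf{T}A_U)}\,\|\hat{\mathbf{h}}-\mathbf{h}_0\| \leq \|A_U(\hat{\mathbf{h}}-\mathbf{h}_0)\| = \|\hat{\mathbf{y}}-\mathbf{y}_0\|.
\]
Likewise, with $\mathbf{x}=\bar{\mathbf{h}}-\mathbf{h}_0$ I obtain
\[
\|\bar{\mathbf{y}}-\mathbf{y}_0\| = \|A_U(\bar{\mathbf{h}}-\mathbf{h}_0)\| \leq \sqrt{\lambda_n(A_U^\mathsf{T}A_U)}\,\|\bar{\mathbf{h}}-\mathbf{h}_0\|.
\]
Chaining these two bounds with the first inequality $\|\hat{\mathbf{y}}-\mathbf{y}_0\| \leq \|\bar{\mathbf{y}}-\mathbf{y}_0\|$ yields the desired second inequality.

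The only subtlety I anticipate is making sure the two-step projection argument is stated cleanly; non-expansiveness of projection onto closed convex sets is standard, so once the identity $\hat{\mathbf{y}} = P_{A_U\mathcal{P}(\Delta)}(\bar{\mathbf{y}})$ is established, the rest is routine.
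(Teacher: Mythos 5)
Your proof is correct, and for the second inequality it is identical to the paper's (the same chain through Lemma~\ref{lem:inequalitymatrix}, $\|\hat{\mathbf{y}}-\mathbf{y}_0\| \leq \|\bar{\mathbf{y}}-\mathbf{y}_0\|$, and back). For the first inequality, however, you take a genuinely different route. The paper splits into two cases: if $\bar{\mathbf{y}}\in A_U\mathcal{P}(\Delta)$ the two points coincide; if not, it invokes the obtuse-angle characterization of projections (that $\bar{\mathbf{y}}-\hat{\mathbf{y}}$ and $\mathbf{y}_0-\hat{\mathbf{y}}$ meet at an angle $\geq \pi/2$) and then finishes with the law of cosines. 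You instead establish the two-step projection identity $\hat{\mathbf{y}} = P_{A_U\mathcal{P}(\Delta)}(\bar{\mathbf{y}})$ via a Pythagorean decomposition and then apply the nonexpansiveness ($1$-Lipschitz property) of projection onto a closed convex set, using that $\mathbf{y}_0$ is a fixed point of that projection. Your version is cleaner in that it avoids the case distinction entirely and cites a single standard fact; it is also worth noting that the paper's obtuse-angle step is really a consequence of the same two-step identity that you make explicit (since $\hat{\mathbf{y}}$ is a priori the projection of $\mathbf{y}$, not $\bar{\mathbf{y}}$, one needs either your reduction or an extra orthogonality observation to justify the claimed obtuse angle). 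Both arguments ultimately rest on the same variational characterization of metric projection, so the difference is one of packaging rather than substance.
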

\begin{proof}
To prove the first inequality we distinguish two cases. First, if $\bar{\mathbf{y}}$ is contained in the cone $A_U\mathcal{P}(\Delta)$ then $\|\mathbf{y}-\mathbf{z}\|\geq \|\mathbf{y}-\bar{\mathbf{y}}\|$ for all $\mathbf{z}\in  A_U\mathbb{R}^n$ and thus $\bar{\mathbf{y}}=\hat{\mathbf{y}}$ since $A_U\mathcal{P}(\Delta)\subseteq A_U\mathbb{R}^n$.

If $\bar{\mathbf{y}}$ is not contained in the cone $A_U\mathcal{P}(\Delta)$ then the vectors $\bar{\mathbf{y}}-\hat{\mathbf{y}}$ and $\mathbf{y}_0-\hat{\mathbf{y}}$ enclose an obtuse angle $\varphi \in [\pi /2, \pi]$ and $\cos \varphi \leq 0$. Thus, by the law of cosine,
\[
\|\bar{\mathbf{y}}-\mathbf{y}_0\|^2 = \|\bar{\mathbf{y}}-\hat{\mathbf{y}}\|^2+\|\hat{\mathbf{y}}-\mathbf{y}_0\|^2-2\cos \varphi \|\bar{\mathbf{y}}-\hat{\mathbf{y}}\|\|\hat{\mathbf{y}}-\mathbf{y}_0\|\geq \|\hat{\mathbf{y}}-\mathbf{y}_0\|^2 
\]
which shows the first inequality. The second inequality follows from Lemma~\ref{lem:inequalitymatrix} since 
\[
\sqrt{\lambda _1 (A_U^\mathsf{T}A_U)}\|\hat{\mathbf{h}}-\mathbf{h}_0\|\leq \|\hat{\mathbf{y}}-\mathbf{y}_0\|\leq \|\bar{\mathbf{y}}-\mathbf{y}_0\|\leq \sqrt{\lambda _n(A_U^\mathsf{T}A_U)}\|\bar{\mathbf{h}}-\mathbf{h}_0\| \, . \qedhere
\]
\end{proof}

The following two lemmata provide lower and upper bounds for the eigenvalues of $A_U^\mathsf{T}A_U$.
\begin{lemma}\label{lemma:MaxEigvalAU}
For all $m\geq 1$
\[
\lambda _n (A_U^\mathsf{T} A_U)\leq mn(c^\Delta)^2 \, .
\]
\end{lemma}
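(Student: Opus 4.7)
The plan is to bound $\lambda_n(A_U^\mathsf{T}A_U)$ by the trace of $A_U^\mathsf{T}A_U$, which in turn equals the squared Frobenius norm of $A_U$, and then control that by using the definition of $c^\Delta$ to bound each entry of $A_U$.

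More precisely, since $A_U^\mathsf{T}A_U$ is positive semidefinite, all its eigenvalues are nonnegative, so
\[
\lambda_n(A_U^\mathsf{T}A_U) \ \leq \ \sum_{k=1}^n \lambda_k(A_U^\mathsf{T}A_U) \ = \ \tr(A_U^\mathsf{T}A_U) \ = \ \sum_{i=1}^m\sum_{j=1}^n (A_U)_{ij}^2 \, .
\]
By the definition of $A_U$, the $(i,j)$-entry is $[\mathbf{u}^{(i)}]^\Delta_j$, and each $\mathbf{u}^{(i)}$ lies in $\mathbb{S}^{d-1}$. By the definition of $c^\Delta$ as the maximum of $[\mathbf{u}]_j^\Delta$ over $\mathbf{u}\in \mathbb{S}^{d-1}$ and $j\in[n]$, we therefore have $|(A_U)_{ij}|\leq c^\Delta$ for every $i,j$. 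Substituting this into the double sum above gives the bound $\lambda_n(A_U^\mathsf{T}A_U) \leq mn(c^\Delta)^2$.

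There is really no obstacle here; the only minor subtlety is noting that the rows $\mathbf{u}^{(i)}$ are unit vectors (which is the standing assumption in Section~\ref{sec:convergence}), so that the pointwise bound $|[\mathbf{u}^{(i)}]^\Delta_j|\leq c^\Delta$ holds by the definition of $c^\Delta$. Alternatively, one could phrase the argument via Lemma~\ref{lem:inequalitymatrix}: $\lambda_n(A_U^\mathsf{T}A_U)=\max_{\|\mathbf{x}\|=1}\|A_U\mathbf{x}\|^2$, and a rowwise Cauchy--Schwarz estimate $|(A_U\mathbf{x})_i|^2\leq \|[\mathbf{u}^{(i)}]^\Delta\|^2\|\mathbf{x}\|^2\leq n(c^\Delta)^2\|\mathbf{x}\|^2$ summed over $i$ yields the same bound. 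Either route is immediate.
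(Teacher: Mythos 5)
Your proof is correct and is essentially the paper's argument: the paper also bounds $\lambda_n(A_U^\mathsf{T}A_U)$ by the trace of the positive semidefinite matrix $A_U^\mathsf{T}A_U$ and controls the diagonal entries $(A_U^\mathsf{T}A_U)_{ii}=\sum_{j=1}^m([\mathbf{u}^{(j)}]_i)^2\leq m(c^\Delta)^2$ using the definition of $c^\Delta$ on unit vectors. Your Frobenius-norm phrasing is just the same double sum organized differently, so there is nothing further to add.
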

\begin{proof}
For all $1\leq i\leq n$
\[
(A_U^\mathsf{T} A_U)_{ii} \ = \ \sum _{j=1}^m ([\mathbf{u}^{(j)}]_i)^2 \ \leq \ m(c^\Delta) ^2 \, .
\]
In particular,
\[
\lambda _n (A_U^\mathsf{T}A_U)\leq \sum _{i=1}^n \lambda _i (A_U^\mathsf{T} A_U) = \sum _{i=1}^n (A_U^\mathsf{T} A_U)_{ii} \leq nm(c^\Delta)^2 \, . \qedhere
\]
\end{proof}
\begin{lemma}\label{lemma:MinEigvalAU}
Let $\mathbf{u}^{(1)},\mathbf{u}^{(2)}, \mathbf{u}^{(3)},\ldots \in \mathbb{S}^{d-1}$ be a sequence of directions for which there exist $0\leq t<1/2$, $\delta >0$ and $N\in \mathbb{N}$ such that
\[
|\{\mathbf{u}^{(1)},\ldots, \mathbf{u}^{(m)}\}\cap C_t(i)|\geq m(2.5nt+\delta)
\]
for all $m\geq N$ and for all $1\leq i\leq n$. Then 
\[
\lambda _1 (A_U^\mathsf{T}A_U)\geq \frac{m\delta}{\max_{1\leq i\leq n} \|\mathbf{v}_i\|^2} \, .
\]
for all $m\geq N$.
\end{lemma}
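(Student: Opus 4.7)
The plan is to lower-bound $\lambda_1(A_U^\mathsf{T}A_U)=\min_{\|\mathbf{x}\|=1}\|A_U\mathbf{x}\|^2 = \min_{\|\mathbf{x}\|=1}\sum_{k=1}^m\langle[\mathbf{u}^{(k)}],\mathbf{x}\rangle^2$ uniformly over unit vectors $\mathbf{x}$. For each $i\in[n]$, let $S_i=\{k\in[m]:\mathbf{u}^{(k)}\in C_t(i)\}$, so the hypothesis reads $|S_i|\geq m(2.5nt+\delta)$. I first observe that these sets are pairwise disjoint: if $k\in S_i\cap S_j$ with $i\neq j$, then, writing $\mathbf{w}^{(k)}=W[\mathbf{u}^{(k)}]$, the triangle inequality forces $\|\mathbf{e}_i-\mathbf{e}_j\|_\infty\leq 2t<1$, contradicting $\|\mathbf{e}_i-\mathbf{e}_j\|_\infty=1$. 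This disjointness lets me write
\[
\|A_U\mathbf{x}\|^2 \;\geq\; \sum_{i=1}^n\sum_{k\in S_i}\langle[\mathbf{u}^{(k)}],\mathbf{x}\rangle^2.
\]

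To harness the \emph{weighted} $\ell_\infty$ bound defining $C_t(i)$, I reweight the test vector by setting $\mathbf{y}=W^{-1}\mathbf{x}$, so that $\langle[\mathbf{u}^{(k)}],\mathbf{x}\rangle=\langle\mathbf{w}^{(k)},\mathbf{y}\rangle$. For $k\in S_i$, the assumption rewrites as $\mathbf{w}^{(k)}=\mathbf{e}_i+\mathbf{s}^{(k)}$ with $\|\mathbf{s}^{(k)}\|_\infty\leq t$, hence $\langle\mathbf{w}^{(k)},\mathbf{y}\rangle=y_i+\langle\mathbf{s}^{(k)},\mathbf{y}\rangle$. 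Applying $(a+b)^2\geq a^2+2ab$ term-by-term and summing yields
\[
\|A_U\mathbf{x}\|^2 \;\geq\; \sum_{i=1}^n|S_i|\,y_i^2 \;+\; 2\sum_{i=1}^n y_i\,\langle\mathbf{S}_i,\mathbf{y}\rangle,\qquad \mathbf{S}_i:=\sum_{k\in S_i}\mathbf{s}^{(k)}.
\]

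The leading term is bounded below by $m(2.5nt+\delta)\|\mathbf{y}\|^2$ using the hypothesis. For the cross term, $\|\mathbf{S}_i\|_\infty\leq|S_i|\,t\leq mt$ together with H\"older and $\|\mathbf{y}\|_1\leq\sqrt{n}\|\mathbf{y}\|$ give
\[
\Bigl|2\sum_{i=1}^n y_i\,\langle\mathbf{S}_i,\mathbf{y}\rangle\Bigr| \;\leq\; 2mt\,\|\mathbf{y}\|_1^2 \;\leq\; 2mnt\,\|\mathbf{y}\|^2.
\]
Combining the two and using $\|\mathbf{y}\|^2=\sum_j x_j^2/\|\mathbf{v}_j\|^2\geq 1/\max_i\|\mathbf{v}_i\|^2$ for unit $\mathbf{x}$ gives
\[
\|A_U\mathbf{x}\|^2 \;\geq\; m(0.5nt+\delta)\,\|\mathbf{y}\|^2 \;\geq\; \frac{m\delta}{\max_i\|\mathbf{v}_i\|^2},
\]
as required. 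The subtle point is the change of variables $\mathbf{y}=W^{-1}\mathbf{x}$: applying Cauchy--Schwarz naively to $\langle\mathbf{s}^{(k)},\mathbf{x}\rangle$ would produce a factor of $\min_i\|\mathbf{v}_i\|^{-1}$ in place of the desired $\max_i\|\mathbf{v}_i\|^{-1}$, and the constant $2.5$ in the hypothesis is tuned precisely so that the $2nt$ loss from the cross term still leaves the residual $\delta$ visible.
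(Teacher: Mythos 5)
Your proof is correct, and it takes a genuinely different route from the paper's. The paper also reweights by $W$ (working with $\bar{A}_U=A_UW$ and pulling out the factor $\lambda_1(W^{-1})^2=1/\max_i\|\mathbf{v}_i\|^2$), but then proceeds by matrix perturbation: it splits $\bar{A}_U$ into the rows lying in some $C_t(j)$ and the rest, discards the latter via Weyl's inequality, writes the former as a $0/1$ incidence matrix $\tilde B_U$ plus a perturbation $D_U$ with entries in $[-t,t]$, and invokes the Hoffman--Wielandt inequality to compare $\lambda_1(B_U^\mathsf{T}B_U)$ with $\lambda_1(\tilde B_U^\mathsf{T}\tilde B_U)=\min_i|S_i|$; the entrywise bound $|\tilde D_{ij}|\le(t+2)mt\le 2.5mt$ is where the constant $2.5$ in the hypothesis originates. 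You instead work directly with the Courant--Fischer characterization $\lambda_1(A_U^\mathsf{T}A_U)=\min_{\|\mathbf{x}\|=1}\|A_U\mathbf{x}\|^2$, expand each term as $(y_i+\langle\mathbf{s}^{(k)},\mathbf{y}\rangle)^2\ge y_i^2+2y_i\langle\mathbf{s}^{(k)},\mathbf{y}\rangle$, and control the cross term by H\"older; all steps check out (the disjointness of the $S_i$ for $t<1/2$, the bound $\|\mathbf{S}_i\|_\infty\le mt$, and $\|\mathbf{y}\|_1^2\le n\|\mathbf{y}\|^2$ are all sound, and $\|W^{-1}\mathbf{x}\|^2\ge 1/\max_i\|\mathbf{v}_i\|^2$ handles the weights in the right direction, as you note). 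Your route is more elementary --- no Weyl or Hoffman--Wielandt needed --- and in fact sharper: your cross-term loss is only $2mnt$, so the hypothesis $m(2nt+\delta)$ would already suffice, whereas the paper's perturbation bound genuinely needs the $2.5$. The only imprecision is your closing remark that $2.5$ is ``tuned precisely'' for the $2nt$ loss; for your argument it leaves slack of $0.5mnt$, and the constant is really tuned to the paper's Hoffman--Wielandt estimate.
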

\begin{proof}
Let $\bar{A_U}=A_U W$. Applying Lemma~\ref{lem:inequalityposdef} to the product of semidefinite matrices $A_U^\mathsf{T}A_U=W^{-1}\bar{A_U}^\mathsf{T}\bar{A_U}W^{-1}$ we obtain 
\[
\lambda _1 (A_U^\mathsf{T}A_U)\geq \lambda _1(W^{-1})^2 \lambda _1 (\bar{A_U}^\mathsf{T}\bar{A_U}) = \frac{\lambda _1 (\bar{A_U}^\mathsf{T}\bar{A_U})}{\max_{1\leq i\leq n} \|\mathbf{v}_i\|^2} \, .
\]
It thus suffices to prove $\lambda _1(\bar{A_U}^\mathsf{T}\bar{A_U})\geq m\delta$ which we will do in the following.

For any matrix $M$ let $M_i$ denote its $i$-th row. We begin by constructing two matrices $B_U,C_U \in \mathbb{R}^{m\times n}$ as follows.
\[
(B_U)_i =\begin{cases}
[\mathbf{u}^{(i)}] W& \text{ if } \mathbf{u}^{(i)} \in \bigcup _{j=1}^n C_t (j) \, ,\\
\mathbf{0}& \text{ otherwise}\, ,
\end{cases}
\]
and 
\[
(C_U)_i =\begin{cases}
[\mathbf{u}^{(i)}] W& \text{ if } \mathbf{u}^{(i)} \not \in \bigcup _{j=1}^n C_t (j) \, , \\
\mathbf{0}& \text{ otherwise} \, .
\end{cases}
\]
Then $\bar{A_U}=B_U + C_U$ and furthermore we observe that $\bar{A_U}^\mathsf{T}\bar{A_U}=B_U^\mathsf{T}B_U+C_U^\mathsf{T} C_U$. By Weyl's inequality \cite[Theorem 3.3.16]{Horn} we therefore have
\[
\lambda_1 (\bar{A_U}^\mathsf{T}\bar{A_U})\geq \lambda _1 (B_U^\mathsf{T}B_U)+\lambda _1 (C_U^\mathsf{T}C_U)\geq \lambda _1 (B_U^\mathsf{T}B_U) \, .
\]
Note that since $t<1/2$ the union $\bigcup _j C_t(j)$ is disjoint. That is, for every $i$ such that $(B_U)_i=[\mathbf{u}^{(i)}]$ is a non-zero row there exists a unique $j\in [n]$ such that $\mathbf{u}^{(i)}\in C_t(j)$. We define matrices $\tilde{B}_U$ and $D_U$ in the following way:
\[
(\tilde{B}_U)_{ij}=\begin{cases} 1 & \text{ if } \mathbf{u}^{(i)} \in C_t (j)\\
0 & \text{ otherwise.}
\end{cases}
\]
\[
(D_U)_{ij}=\begin{cases} (B_U)_{ij}-1 & \text{ if } \mathbf{u}^{(i)} \in C_t (j)\\
(B_U)_{ij} & \text{ otherwise.}
\end{cases}
\]
Then, by definition, $B_U=\tilde{B}_U+D_U$ and furthermore $B_U^\mathsf{T}B_U=\tilde{B}_U^\mathsf{T}\tilde{B}_U+\tilde{D}$ where
\[
\tilde{D}=D_U^\mathsf{T}D_U+D_U^\mathsf{T}\tilde{B}_U+\tilde{B}_U^\mathsf{T}D_U \, .
\]
By the Hoffman-Wieland inequality \cite{HoffmanWielandt}
\[
(\lambda _1 (B_U^\mathsf{T}B_U)-\lambda _1 (\tilde{B}_U^\mathsf{T}\tilde{B}_U))^2 \leq \sum _{i=1}^n\sum _{j=1}^n \tilde{D}_{ij}^2 \, .
\]
We observe that by construction every element in $D_U$ is in the interval $[-t,t]$. We therefore have
\[
| (D_U^\mathsf{T} D_U)_{ij}|\leq mt^2
\]
for all $1\leq i,j\leq n$ and similarly
\[
|(D_U^\mathsf{T}\tilde{B}_U )_{ij}|\leq mt \quad \text{ and } |(\tilde{B}_U^\mathsf{T}D_U)_{ij}| \leq mt \, .
\]
In particular, since $0\leq t< 1/2$
\[
|\tilde{D}_{ij}|\leq (t+2)mt \leq 2.5mt
\]
 and thus we obtain
 \[
 (\lambda _1 (B_U^\mathsf{T}B_U)-\lambda _1 (\tilde{B}_U^\mathsf{T}\tilde{B}))^2 \leq (2.5)^2 m^2t^2n^2
 \]
 from which 
 \begin{equation}\label{eq:conv1}
  \lambda _1 (B_U^\mathsf{T}B_U)\geq \lambda _1(\tilde{B}_U^\mathsf{T}\tilde{B}) -2.5 mtn
 \end{equation}
 follows. We observe that by construction $\tilde{B}_U^\mathsf{T}\tilde{B}_U$ is a diagonal matrix with
 \[
 (\tilde{B}_U^\mathsf{T}\tilde{B}_U)_{ii} = |\{\mathbf{u}^{(1)},\ldots, \mathbf{u}^{(m)}\} \cap C_t (i)|
 \]
 for all $1\leq i\leq n$. By assumption and Equation~\eqref{eq:conv1} we therefore obtain
 \[
 \lambda _1 (\bar{A_U}^\mathsf{T} \bar{A_U})\geq \lambda _1 (B_U^\mathsf{T}B_U)\geq m(2.5nt+\delta)-2.5mtn=m\delta
 \]
 which concludes the proof.

\end{proof}

\begin{proof}[{Proof of Theorem~\ref{thm:convergence}}] First we observe that under the assumptions of Theorem~\ref{thm:convergence} it follows from Lemma~\ref{lemma:MinEigvalAU} that $A_U^\mathsf{T}A_U$ is invertible for all $m\geq N$. In particular, $A_U$ has rank $n$ in this case and the least-squares estimator $\hat{P}^\Delta _m (U,\mathbf{y})$ is unique for all $\mathbf{y}\in \relint A_U \mathcal{P}(\Delta)$ by Proposition~\ref{prop:uniquenessy}.
By definition, $\bm{\varepsilon} =\mathbf{y}-A_U\mathbf{h}_0$. Since $\bar{\mathbf{y}}-\mathbf{y}$ is in the orthogonal complement of $A_U\mathbb{R}^n$ we have
\[
A_U^\mathsf{T} \bm{\varepsilon }= A_U^\mathsf{T} (\mathbf{y}-\bar{\mathbf{y}})+A_U^\mathsf{T}(\bar{\mathbf{y}}-A_U\mathbf{h}_0)=A_U^\mathsf{T}(\bar{\mathbf{y}}-A_U\mathbf{h}_0)
\]
and thus, by Lemmata \ref{lemma:BoundsOnYY0}, \ref{lemma:MaxEigvalAU}, \ref{lemma:MinEigvalAU} for all $m\geq N$
\begin{eqnarray*}
\|A_U^\mathsf{T} \bm{\varepsilon} \| & \geq & \lambda _1 (A_U^\mathsf{T}A_U)\|\bar{\mathbf{h}}-\mathbf{h}_0\|\\
&\geq & \frac{\left(\sqrt{\lambda _1 (A_U^\mathsf{T}A_U)}\right)^3}{\sqrt{\lambda _n (A_U^\mathsf{T}A_U)}}\|\hat{\mathbf{h}}-\mathbf{h}_0\|\\
&\geq & m \frac{\delta^{\frac{3}{2}}}{\sqrt{n}c^\Delta (\max _i \|\mathbf{v}_i\|^2)^{\frac{3}{2}}}\|\hat{\mathbf{h}}-\mathbf{h}_0\| \, .
\end{eqnarray*} 
Since all entries of $A_U$ are bounded by $c^{\Delta}$ and the error terms $\{\bm\varepsilon ^{(i)}\}_{i\geq 1}$ are pairwise independent and of finite, uniformly bounded variance, by the law of large numbers we have that
\[
\frac{1}{m}A_U^\mathsf{T} \bm\varepsilon \rightarrow \mathbf{0}
\]
almost surely. Thus also $\hat{\mathbf{h}}$ converges to $\mathbf{h}_0$ almost surely. Therefore $P(\hat{\mathbf{h}})=\hat{P}^\Delta _m(U,\mathbf{y})$ converges to $P(\mathbf{h}_0)$ in Hausdorff distance almost surely by Lemma~\ref{lem:Hausdorffdist} as claimed. 

Further, by using the inequality above and Lemma~\ref{lem:Hausdorffdist}, for all $s\geq 0$ we obtain
\begin{eqnarray}
\mathbb{P}(d_H(P(\hat{\mathbf{h}}),P(\mathbf{h}_0))\geq s)&\leq &\mathbb{P}\left(\|\hat{\mathbf{h}}-\mathbf{h}_0\|\geq \frac{s}{\sqrt{d}c^\Delta}\right)\\
&\leq &\mathbb{P}\left(\|A_U^T\bm{\varepsilon}\|\geq \frac{sm\kappa}{\sqrt{dn}(c^\Delta)^2}\right)\\
&\leq &\mathbb{P}\left(\|A_U^T\bm{\varepsilon}\|_\infty\geq \frac{sm\kappa}{\sqrt{d}(c^\Delta)^2 n}\right)\\
&\leq &\sum _{i=1}^n\mathbb{P}\left(|(A_U^T\bm{\varepsilon})_i|\geq \frac{sm\kappa}{\sqrt{d}(c^\Delta)^2 n}\right)\label{eq:rate1}
\end{eqnarray}
For all $1\leq i\leq n$,
\[
(A_U^T\bm{\varepsilon})_i \ = \ \sum _{j=1}^n [\mathbf{u}^{(j)}]_i \varepsilon^{(j)}
\]
is normally distributed with mean zero and variance $\sigma ^2 = \sum _{j=1}^n ([\mathbf{u}^{(j)}]_i)^2 \sigma _j^2$. By definition of $c^\Delta$, we have $[\mathbf{u}^{(j)}]_i\leq c^\Delta$ for all $i,j$. Moreover, by definition of $C_t(k)$, for all $k\neq i$ and all $\mathbf{u}^{(j)}\in C_t(k)$ we have $[\mathbf{u}^{(j)}]_i\leq \tfrac{t}{\|\mathbf{v}_i\|}$. Since $t<1/2$ the union $\bigcup _ k C_t(k)$ is disjoint and by assumption we have for all $m\geq N$
\begin{eqnarray*}
\sigma ^2 &=&\sum _{j=1}^n ([\mathbf{u}^{(j)}]_i)^2 \sigma _j^2\\
&\leq&\sum _{k\neq i} \sum _{j\colon \mathbf{u}^{(j)}\in C_t(k)}\frac{t^2}{\|\mathbf{v}_i\|^2}\gamma + \sum _{j:\mathbf{u}^{(j)}\not \in \bigcup _ {k\neq i} C_t(k)}(c^{\Delta})^2 \gamma\\
&\leq &\gamma \cdot \left((n-1)m(2.5 nt+\delta)\frac{t^2}{\|\mathbf{v}_i\|^2}+(m-(n-1)m(2.5 nt+\delta))(c^{\Delta})^2 \right)\\
&=&m\gamma \left((c^\Delta)^2-\left((c^\Delta)^2-\frac{t^2}{\|\mathbf{v}_i\|^2}\right)(n-1)(2.5nt+\delta) \right)\\
&\leq & m\gamma \lambda
\end{eqnarray*}
Applying standard tail bounds for Gaussian random variables to~\eqref{eq:rate1} yields
\[
\mathbb{P}(d_H(P(\hat{\mathbf{h}}),P(\mathbf{h}_0))\geq s) \leq 2n \exp \left(-\frac{s^2m\kappa^2}{2d\gamma \lambda n^2(c^\Delta)^4}\right) \, .
\]
Thus, setting 
\[
s \ = \ \frac{1}{\sqrt{m}}\frac{n(c^\Delta)^2}{\kappa}\sqrt {2d\gamma \lambda \log \left( \frac{2n}{\eta}\right)}
\]
yields the claim.
\end{proof}

\section{Limitations}\label{sec:ReconstrFixedFacetNormals}
In the previous sections we focus on the reconstruction of polytopes with fixed facet directions and fixed normal fan, that is, fixed combinatorial type. In this section we demonstrate the limitations of our results if the assumption of fixed normal fan is removed.
\subsection{Uniqueness}\label{subsec:arbitraryuniqueness}
In dimension $2$ the facet directions of a polytope determine the combinatorial type of the polytope, up to deformation. In dimension $3$ or higher this is no longer the case, as we illustrated in Example~\ref{ex:3d}.

In Theorem~\ref{thm:main} we prove that for fixed combinatorial type the least-squares estimator is given by a quadratic program. Using this result in Proposition~\ref{prop:uniquenessy} we conclude that the reconstruction is unique whenever the matrix $A_U$ has full rank. In Corollary~\ref{cor:piecewisequadratic} we see that if we remove the restriction on the combinatorial type the solution set is given by a piecewise quadratic program. We now give an example that shows that in general for given directions this piecewise quadratic program does not have a unique global minimum even though $A^\Delta_U$ has full rank for all possible deformation cones $\Delta$.  Moreover, the solution set can even be disconnected. 

Let $V$, $P_1=P(\mathbf{h}_1)$ and $P_2=P(\mathbf{h}_2)$ where $\mathbf{h}_1=(4,4,2,2,0)^\mathsf{T}$ and $\mathbf{h}_2=(2,2,4,4,0)^\mathsf{T}$
be defined as in Example~\ref{ex:3d}. 

We consider the following data set of directions. Let
\[
U =\left\{\begin{pmatrix} 1 \\ 1 \\-1\end{pmatrix}, \begin{pmatrix} 1 \\ -1 \\0\end{pmatrix}, \begin{pmatrix} -1 \\ 1 \\0\end{pmatrix}, \begin{pmatrix} -1 \\ -1 \\0\end{pmatrix}, \begin{pmatrix} 1 \\ 1 \\6\end{pmatrix}, \begin{pmatrix} -1 \\ -1 \\4\end{pmatrix}\right\} \, .
\] 
With the notation as in Example~\ref{ex:3d} we observe that $\mathbf{u}^{(i)} \in \beta_i = \gamma_i$ for $i \in [4]$, and $\mathbf{u}^{(j)} \in \beta_j$, and $\mathbf{u}^{(j)} \in \gamma_j$ for $j \in \{5,6\}$. From that we can determine the matrices
\[
A_U^{\Delta _1} = \begin{pmatrix}
1 & 0 & 1 & 0 & 3  \\
 0 & 1 & 1 & 0 & 2 \\
 1 & 0 & 0 & 1 & 2 \\
 0 & 1 & 0 & 1& 2 \\
 1 & 0 & 3 & 2  & 0 \\
 0 & 1 & 1 & 2 & 0 
\end{pmatrix}, \quad \quad A_U^{\Delta _2} = \begin{pmatrix}
1 & 0 & 1 & 0 & 3  \\
 0 & 1 & 1 & 0 & 2 \\
 1 & 0 & 0 & 1 & 2 \\
 0 & 1 & 0 & 1& 2 \\
 3 & 2 & 1 & 0  & 0 \\
 1 & 2 & 0 & 1 & 0 
\end{pmatrix}.
\]
The vector of support function evaluations $(\mathbf{h}_{P_j}(\mathbf{u}^{(i)}))_{i=1}^6$ is equal to $A_U^{\Delta _j}\mathbf{h}_j=(6,6,6,6,10,10)^\mathsf{T}$ for both $j=1$ and $j=2$. In particular, for $\mathbf{y}=(6,6,6,6,10,10)^\mathsf{T}$ and $j=1,2$
\[
\frac{1}{6}\sum _{i=1}^6(\mathbf{h}_{P_j}(\mathbf{u}^{(i)})-y^{(i)})^2=0 \, .
\]
Since the matrices $A_U^{\Delta _1}$ and $A_U^{\Delta _2}$ have rank $5$, by Proposition~\ref{prop:uniquenessy}, $P_1$ and $P_2$ are the unique reconstructions within the classes $\mathcal{P}(\Delta _1)$ and $\mathcal{P}(\Delta _2)$, respectively. However, within the bigger class of polytopes $\mathcal{K}$ with facet directions restricted to $V$ both polytopes $P_1$ and $P_2$ are least-squares estimators and thus the reconstruction is no longer unique. 

\subsection{Convergence}\label{subsec:arbitraryconvergence}
In Section~\ref{sec:convergence} we consider convergence properties of our Algorithm~\ref{AlgoComplexity} for a sequence of directions $\mathbf{u}^{(1)},\mathbf{u}^{(2)},\ldots$ under the assumption that the unknown underlying polytope has a particular normal fan $\Delta$. In this section we will see that the assumption on the normal fan is crucial for the convergence. To see that, in the following we give an example in which all assumptions of Corollary~\ref{cor:convergence} are satisfied, apart from the condition that $P(\mathbf{h}_0)$ is contained in $\mathcal{P}(\Delta)$.

We again consider the polytopes $P_1$ and $P_2$ defined in Example~\ref{ex:3d}. With the same notation as in the example, we consider the following two sequences of directions and support function evaluations:
For any $i \geq 1$ let
\[ \mathbf{u}^{(i)} =
  \begin{cases}
    \mathbf{v}_1       & \quad \text{if } i = 0, 1,2 \mod 10 \\
    \mathbf{v}_2  & \quad \text{if } i = 3, 4, 5  \mod 10\\
    \mathbf{v}_3  & \quad \text{if } i = 6 \mod 10\\
    \mathbf{v}_4  & \quad \text{if } i = 7 \mod 10\\
    \mathbf{v}_5 & \quad \text{if } i=8,9 \mod 10
  \end{cases}
\]
and $y^{(i)}=h_{P_2}(\mathbf{u}^{(i)})$, and 
 \[ \tilde{\mathbf{u}}^{(i)} =
  \begin{cases}
    \mathbf{v}_1       & \quad \text{if } i = 0 \mod 10 \\
    \mathbf{v}_2  & \quad \text{if } i = 1, \ldots 6 \mod 10\\
    \mathbf{v}_3  & \quad \text{if } i = 7  \mod 10\\
    \mathbf{v}_4  & \quad \text{if } i = 8 \mod 10\\
      \mathbf{v}_5 & \quad \text{if } i=9 \mod 10
  \end{cases}
  \]
and $\tilde{y}^{(i)}=h_{P_2}(\tilde{\mathbf{u}}^{(i)})$.\\

We restrict ourselves to reconstructions within the set of polytopes in $\mathcal{P}(\Delta _1)$ where $\Delta _1$ is the normal fan of $P_1$. Since every polytope with facet directions $V$ is uniquely determined by the support function evaluations in directions $\mathbf{v}_1,\ldots, \mathbf{v}_n$, the true underlying polytope for both data sets defined above, namely $P_2$, does not lie in $\mathcal{P}(\Delta _1)$. Thus, the least-squares estimator cannot converge to the true underlying body. In fact, it may not converge at all: Both sequences of data satisfy the assumptions of Corollary~\ref{cor:convergence}. For all $k\geq 1$ and $m=10k$, we obtain $\hat{\mathbf{h}}$ as the solution of $\argmin _{P(\mathbf{h})\in \mathcal{P}(\Delta_1)}\| A_j\mathbf{h}-\mathbf{y}\|$, for $j=1,2$, where
\[
A_1 =\begin{bmatrix}
B_1^\mathsf{T} \\
\vdots\\
B_1^\mathsf{T}
\end{bmatrix} \in \mathbb{R}^{m \times 5}, ~~~~~~
B_1 =\begin{bmatrix}
1 & 1 & 1 & 0 & 0& 0  & 0 & 0 & 0 & 0\\
0 & 0 & 0 & 1 & 1& 1  & 0 & 0 & 0 & 0\\
0 & 0 & 0 & 0 & 0& 0  & 1 & 0 & 0 & 0\\
0 & 0 & 0 & 0 & 0& 0  & 0 & 1 & 0 & 0\\
0 & 0 & 0 & 0 & 0& 0  & 0 & 0 & 1 & 1
\end{bmatrix}
\]
for the first data sequence, and 
\[
A_2 =\begin{bmatrix}
B_2^\mathsf{T} \\
\vdots\\
B_2^\mathsf{T}
\end{bmatrix} \in \mathbb{R}^{m \times 5}, ~~~~~~
B_2 =\begin{bmatrix}
1 & 0 & 0 & 0 & 0& 0  & 1 & 0 & 0 & 0\\
0 & 1 & 1 & 1 & 1 & 1& 1  & 0 & 0 & 0 \\
0 & 0 & 0 & 0 & 0& 0  & 0 & 1 & 0 & 0\\
0 & 0 & 0 & 0 & 0& 0  & 0 & 0 & 1 & 0\\
0 & 0 & 0 & 0 & 0& 0  & 0 & 0 & 0 & 1
\end{bmatrix}
\]
for the second data sequence. In the first case, $\hat{\mathbf{h}}=(2.5,2.5,2.5,2.5,0)^\mathsf{T}$ while in the second case $\hat{\mathbf{h}}=(62/19,42/19, 52/19,52/19,0)^\mathsf{T} \approx (3.26,2.21,2.73,2.73,0)^\mathsf{T}$. See Figure~\ref{fig:convergence}. Note that $\| A_j\mathbf{h}-\mathbf{y}\|$ equals $0$ for $\mathbf{h} = \mathbf{h}_2 = (2,2,4,4,0)^\mathsf{T}$ but this vector is not an element of $\mathcal{P}(\Delta _1)$. For the first sequence of data the first two elements of $\mathbf{h}$ have the most effect on the value for $\| A_j\mathbf{h}-\mathbf{y}\|$. Furthermore, because of the symmetries and the symmetric distribution of data points, the optimal solution is symmetric in the first $4$ elements of $\mathbf{h}$. In this case, $P(\mathbf{h})$ has just $5$ vertices, since $w_5$ and $w_6$ coincide. In the second sequence of data, the main contribution to $\| A_j\mathbf{h}-\mathbf{y}\|$ is given by the second element of $\mathbf{h}$. Therefore the facet with normal vector $\mathbf{v}_2$  in $P_2$  coincides almost with the corresponding facet of $P((62/19,42/19, 52/19,52/19,0)^\mathsf{T})$. In this case $w_5$ and $w_6$ are close but do not coincide.

\begin{figure}\label{fig:convergence}
\centering
\includegraphics[scale=0.132]{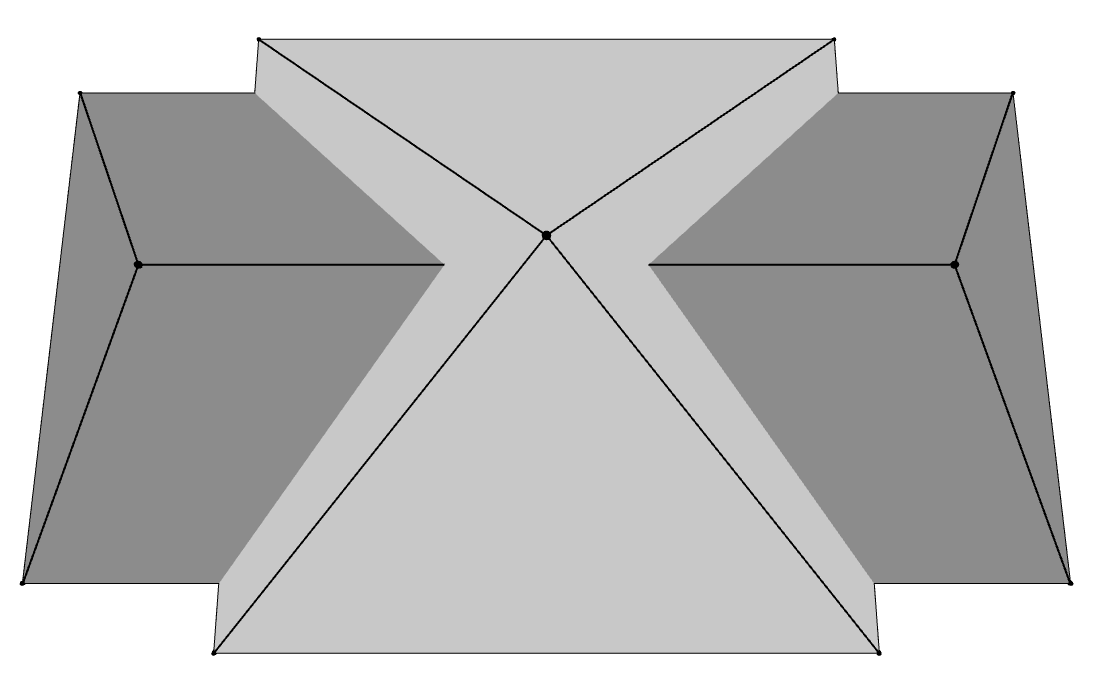}
\includegraphics[scale=0.14]{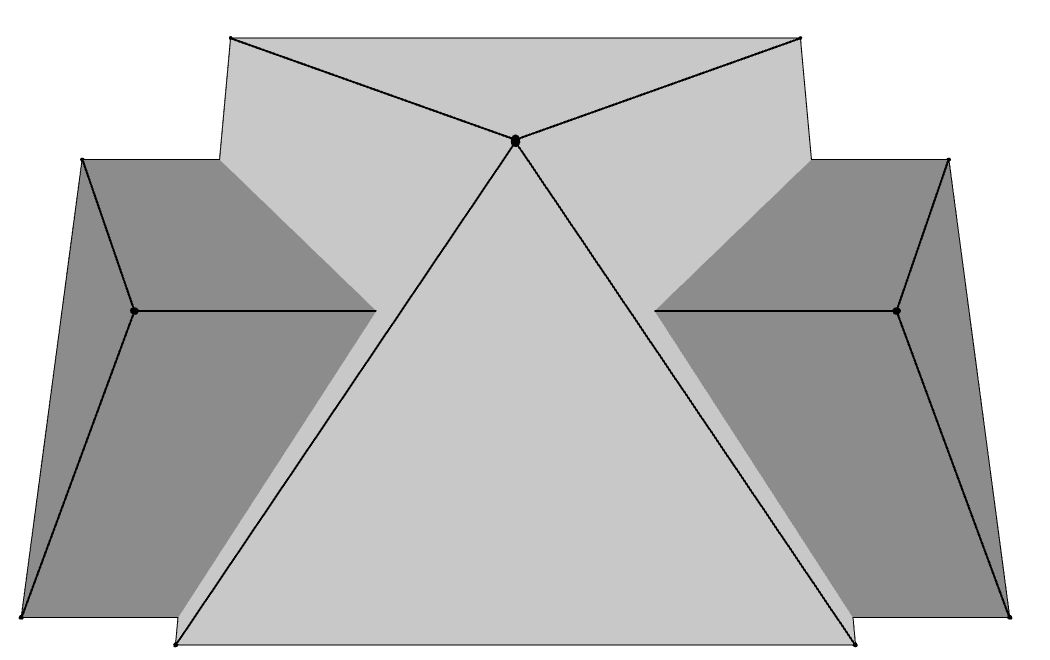}
\caption{Left: Reconstruction for the first data set with optimal solution $P((2.5,2.5,2.5,2.5,0)^\mathsf{T})$. Right: Optimal solution $P((3.26,2.21,2.73,2.73,0)^\mathsf{T})$ for the second data set. }
\end{figure}

This shows that the assumptions of Corollary~\ref{cor:convergence} are not sufficient for convergence if the unknown underlying polytope does not have the given normal fan.\\

\section{Concluding remarks}
In this article, we investigated the reconstruction of polytopes with fixed facet directions from support function evaluations. Our emphasis was on polytopes with given normal fan and their deformations for which we studied questions about the geometry of the solution set, uniqueness of the reconstruction as well as convergence. We also discussed limitations of our results if the restriction on the deformation cone is removed. This naturally raises the question about how one can select a suitable deformation cone for the reconstruction. This is highly relevant as approximating a polytope with a class of polytopes in a different deformation cone may lead to poor outcome. For instance, in Example~\ref{ex:3d}, given a polytope in $\mathcal{P}(\Delta _1)$, the best approximation with a polytope in $\mathcal{P}(\Delta _2)$ will be a pyramid with a square base whose Hausdorff distance to the original polytope can be arbitrarily large. In general, the number of deformation cones can be huge, even in small dimensions. Every deformation cone of a simple polytope with facet directions $\mathbf{v}_1,\ldots, \mathbf{v}_n$ corresponds to a regular central triangulation of the vector configuration $\mathbf{v}_1,\ldots, \mathbf{v}_n$ (see~\cite[Theorem 9.5.6.]{Triangulations}). For example, in~\cite{joswig2019parametric}, the number of triangulations was studied in case if the normal vectors are given by $e_i-e_j$, $1\leq i,j\leq n$, $i\neq j$; for $n=5$, there are $27248$ combinatorial types, up to symmetry~\cite[Theorem 24]{joswig2019parametric}. Developing an algorithm that selects an appropriate deformation cone for given data and possibly prior information about the underlying polytope remains an interesting and probably very challenging open question.\\

\textbf{Acknowledgement:} We would like to thank the annonymous referees for carefully reading our manuscript and many insightful comments that helped us improving the paper. We also would like to thank Diane Holcomb for helpful discussions concerning Section~\ref{sec:convergence}. MD and KJ were partially supported by the Wallenberg AI, Autonomous Systems and Software program. KJ was furthermore partially supported by grant 2018-03968 of the Swedish research council as well as the G\"oran Gustafsson Foundation.

\bibliographystyle{siam}
\bibliography{references}

\begin{thebibliography}{10}

\bibitem{Balzs2016ConvexRT}
{\sc G.~Bal{\'a}zs}, {\em Convex regression: Theory, practice, and
  applications}, 2016.

\bibitem{MR4133662}
{\sc T.~Brysiewicz}, {\em Numerical software to compute {N}ewton polytopes and
  tropical membership}, Math. Comput. Sci., 14 (2020), pp.~577--589.

\bibitem{wallcrossing}
{\sc F.~Chapoton, S.~Fomin, and A.~Zelevinsky}, {\em Polytopal realizations of
  generalized associahedra}, vol.~45, 2002, pp.~537--566.
\newblock Dedicated to Robert V. Moody.

\bibitem{RoboticsCole}
{\sc R.~Cole and C.-K. Yap}, {\em Shape from probing}, J. Algorithms, 8 (1987),
  pp.~19--38.

\bibitem{Toricvarieties}
{\sc D.~A. Cox, J.~B. Little, and H.~K. Schenck}, {\em Toric varieties},
  vol.~124 of Graduate Studies in Mathematics, American Mathematical Society,
  Providence, RI, 2011.

\bibitem{Triangulations}
{\sc J.~A. De~Loera, J.~Rambau, and F.~Santos}, {\em Triangulations}, vol.~25
  of Algorithms and Computation in Mathematics, Springer-Verlag, Berlin, 2010.
\newblock Structures for algorithms and applications.

\bibitem{RoboticsAndComputerVision}
{\sc H.~Edelsbrunner and S.~S. Skiena}, {\em Probing convex polygons with
  {X}-rays}, SIAM J. Comput., 17 (1988), pp.~870--882.

\bibitem{Ivan}
{\sc F.~Fillastre and I.~Izmestiev}, {\em Shapes of polyhedra, mixed volumes
  and hyperbolic geometry}, Mathematika, 63 (2017), pp.~124--183.

\bibitem{GeometricTomography}
{\sc R.~J. Gardner}, {\em Geometric tomography}, vol.~58 of Encyclopedia of
  Mathematics and its Applications, Cambridge University Press, New York,
  second~ed., 2006.

\bibitem{GardnerNew}
{\sc R.~J. Gardner and M.~Kiderlen}, {\em A new algorithm for 3{D}
  reconstruction from support functions}, IEEE Trans. Pattern Anal. Mach.
  Intell., 31 (2009), pp.~556--562.

\bibitem{gardner2006convergence}
{\sc R.~J. Gardner, M.~Kiderlen, and P.~Milanfar}, {\em Convergence of
  algorithms for reconstructing convex bodies and directional measures}, Ann.
  Statist., 34 (2006), pp.~1331--1374.

\bibitem{Gregor2002FastFR}
{\sc J.~Gregor, S.~S. Gleason, M.~J. Paulus, and J.~E. Cates}, {\em Fast
  feldkamp reconstruction based on focus of attention and distributed
  computing}, International Journal of Imaging Systems and Technology, 12
  (2002).

\bibitem{MRI}
{\sc J.~Gregor and F.~R. Rannou}, {\em Three-dimensional support function
  estimation and application for projection magnetic resonance imaging},
  International Journal of Imaging Systems Technology, 12 (2002), pp.~43--50.

\bibitem{Gruber}
{\sc P.~M. Gruber}, {\em Convex and discrete geometry}, vol.~336 of Grundlehren
  der Mathematischen Wissenschaften [Fundamental Principles of Mathematical
  Sciences], Springer, Berlin, 2007.

\bibitem{Guntuboyina}
{\sc A.~Guntuboyina}, {\em Optimal rates of convergence for convex set
  estimation from support functions}, Ann. Statist., 40 (2012), pp.~385--411.

\bibitem{Hall}
{\sc P.~Hall{\textdagger}}, {\em On Representatives of Subsets}, Birkh{\"a}user
  Boston, Boston, MA, 1987, pp.~58--62.

\bibitem{Hannah2013}
{\sc L.~A. Hannah and D.~B. Dunson}, {\em Multivariate convex regression with
  adaptive partitioning}, J. Mach. Learn. Res., 14 (2013), pp.~3261--3294.

\bibitem{HoffmanWielandt}
{\sc A.~J. Hoffman and H.~W. Wielandt}, {\em The variation of the spectrum of a
  normal matrix}, Duke Math. J., 20 (1953), pp.~37--39.

\bibitem{Horn}
{\sc R.~A. Horn and C.~R. Johnson}, {\em Topics in matrix analysis}, Cambridge
  University Press, Cambridge, 1991.

\bibitem{joswig2019parametric}
{\sc M.~Joswig and B.~Schr{\"o}ter}, {\em Parametric shortest-path algorithms
  via tropical geometry}, arXiv preprint arXiv:1904.01082v2,  (2019).

\bibitem{Kozlov}
{\sc M.~K. {Kozlov}, S.~P. {Tarasov}, and L.~G. {Khachiyan}}, {\em {Polynomial
  solvability of convex quadratic programming}}, {Sov. Math., Dokl.}, 20
  (1979), pp.~1108--1111.

\bibitem{LaserData}
{\sc A.~S. Lele, S.~R. Kulkarni, and A.~S. Willsky}, {\em Convex-polygon
  estimation from support-line measurements and applications to target
  reconstruction from laser-radar data}, J. Opt. Soc. Am. A, 9 (1992),
  pp.~1693--1714.

\bibitem{Magnani2009}
{\sc A.~Magnani and S.~P. Boyd}, {\em Convex piecewise-linear fitting}, Optim.
  Eng., 10 (2009), pp.~1--17.

\bibitem{UpperBound}
{\sc P.~McMullen}, {\em The maximum numbers of faces of a convex polytope},
  Mathematika, 17 (1970), pp.~179--184.

\bibitem{McMullen}
\leavevmode\vrule height 2pt depth -1.6pt width 23pt, {\em Representations of
  polytopes and polyhedral sets}, Geometriae Dedicata, 2 (1973), pp.~83--99.

\bibitem{PolytopeAlgebra}
\leavevmode\vrule height 2pt depth -1.6pt width 23pt, {\em The polytope
  algebra}, Adv. Math., 78 (1989), pp.~76--130.

\bibitem{PolytopeSimple}
\leavevmode\vrule height 2pt depth -1.6pt width 23pt, {\em On simple
  polytopes}, Invent. Math., 113 (1993), pp.~419--444.

\bibitem{Meyer}
{\sc W.~Meyer}, {\em Indecomposable polytopes}, Trans. Amer. Math. Soc., 190
  (1974), pp.~77--86.

\bibitem{Rajeev}
{\sc R.~Motwani and P.~Raghavan}, {\em Randomized algorithms}, Cambridge
  University Press, Cambridge, 1995.

\bibitem{Nemirovsky}
{\sc A.~S. Nemirovsky and D.~B.~a. Yudin}, {\em Problem complexity and method
  efficiency in optimization}, A Wiley-Interscience Publication, John Wiley \&
  Sons, Inc., New York, 1983.
\newblock Translated from the Russian and with a preface by E. R. Dawson,
  Wiley-Interscience Series in Discrete Mathematics.

\bibitem{Reilly}
{\sc E.~O'Reilly and V.~Chandrasekaran}, {\em Spectrahedral {R}egression},
  https://arxiv.org/abs/2110.14779,  (2021).

\bibitem{Postnikov}
{\sc A.~Postnikov}, {\em Permutohedra, associahedra, and beyond}, Int. Math.
  Res. Not. IMRN,  (2009), pp.~1026--1106.

\bibitem{PRW}
{\sc A.~Postnikov, V.~Reiner, and L.~Williams}, {\em Faces of generalized
  permutohedra}, Doc. Math., 13 (2008), pp.~207--273.

\bibitem{PrinceWillsky}
{\sc J.~L. Prince and A.~S. Willsky}, {\em Reconstructing convex sets from
  support line measurements}, IEEE Transactions on Pattern Analysis and Machine
  Intelligence, 12 (1990), pp.~377--389.

\bibitem{Schneiter}
{\sc J.~Schneiter and T.~Sheridan}, {\em An automated tactile sensing strategy
  for planar object recognition and localization}, IEEE Transactions on Pattern
  Analysis and Machine Intelligence, 12 (1990), pp.~775--786.

\bibitem{Shephard}
{\sc G.~C. Shephard}, {\em Decomposable convex polyhedra}, Mathematika, 10
  (1963), pp.~89--95.

\bibitem{Soh}
{\sc Y.~S. Soh and V.~Chandrasekaran}, {\em Fitting {T}ractable {C}onvex {S}ets
  to {S}upport {F}unction {E}valuations}, Discrete Comput. Geom., 66 (2021),
  pp.~510--551.

\bibitem{ImageProcessing}
{\sc H.~Stark}, {\em Convex projections in image processing}, in IEEE
  International Symposium on Circuits and Systems, 1990, pp.~2034--2036 vol.3.

\bibitem{MinimalData}
{\sc H.~Stark and H.~Peng}, {\em Shape estimation in computer tomography from
  minimal data}, in [1988 Proceedings] 9th International Conference on Pattern
  Recognition, 1988, pp.~184--186 vol.1.

\bibitem{Vavasis2001}
{\sc S.~A. Vavasis}, {\em Complexity Theory: Quadratic Programming}, Springer
  US, Boston, MA, 2001, pp.~304--307.

\bibitem{Ziegler}
{\sc G.~M. Ziegler}, {\em Lectures on polytopes}, vol.~152 of Graduate Texts in
  Mathematics, Springer-Verlag, New York, 1995.

\end{thebibliography}

\end{document}